\documentclass[11pt]{amsart}
\usepackage{amsmath,amssymb,amsthm,geometry}

\usepackage{latexsym, amsmath,amssymb}
\usepackage{graphicx}
\usepackage{hyperref}
\usepackage{enumerate}
\usepackage{color}
\usepackage[utf8]{inputenc}
\usepackage{amsmath,amssymb,amsthm,mathtools}
\usepackage{microtype}

\renewcommand{\epsilon}{\varepsilon}
\newtheorem{theorem}{Theorem}[section]
\newtheorem{lemma}[theorem]{Lemma}

\newtheorem{corr}[theorem]{Corollary}

\newtheorem{proposition}[theorem]{Proposition}
\newtheorem{deff}[theorem]{Definition}
\newtheorem{remark}[theorem]{Remark}
\newcommand{\bth}{\begin{theorem}}
	\newcommand{\ble}{\begin{lemma}}
		\newcommand{\bcor}{\begin{corr}}

			\newcommand{\bdeff}{\begin{deff}}

				\newcommand{\bprop}{\begin{proposition}}
					\newcommand{\ele}{\end{lemma}}
				\newcommand{\ecor}{\end{corr}}
			\newcommand{\edeff}{\end{deff}}
		
		\newcommand{\eprop}{\end{proposition}}

	\newcommand{\la}{\lambda}

	\renewcommand{\Pi}{\varPi}
	\renewcommand{\Re}{ {\rm Re}\,}
	
	\renewcommand{\epsilon}{\varepsilon}

	\newcommand{\R}{\mathbb{R}}
	\newcommand{\N}{\mathbb{N}}

	\newcommand{\ls}{\lesssim}
	\newcommand{\gs}{\gtrsim}
	
	\newcommand{\CM}{\mathcal{C}_M}

	\numberwithin{equation}{section}
		\newcommand{\logit}[1]{\log^{(#1)}}
\begin{document}

\title{Maximal growth of the Stein-Wainger oscillatory integral}
\author{ Cheng Zhang and Zhifei Zhu}

\address{Mathematical Sciences Center\\
	Tsinghua University\\
	Beijing, BJ 100084, China}
\email{czhang98@tsinghua.edu.cn; zhifeizhu@tsinghua.edu.cn}
\date{}
	\keywords{oscillatory integral; Gevrey class; Denjoy-Carleman}

\begin{abstract}
	We establish a precise hierarchy for the maximal growth of the Stein-Wainger oscillatory integral as the regularity of the phase varies over Denjoy-Carleman classes, such as the Gevrey classes and their generalizations. In particular, we resolve a problem posed by Wang--Zhang \cite{wz21}, motivated by eigenfunction restriction estimates on curves, and also provide a new proof of a theorem of Nagel--Wainger \cite{nw} on the Hilbert transform along curves. A key ingredient is the sharp estimate on the growth of a phase near a flat point. 
\end{abstract}
\maketitle
\section{Introduction}

Let $I=[-1,1]$, and let $\psi\in C^1(I)$ be a real-valued function. For $\lambda\gg1$, we define the Stein--Wainger oscillatory integral by
\begin{equation}\label{mla}
	m(\lambda)
	:=
	\mathrm{p.v.}\int_I e^{i\lambda\psi(t)}\,\frac{dt}{t}.
\end{equation}
The main objective of this paper is to identify the exact relationship between the regularity of $\psi$ and the maximal possible growth of $|m(\lambda)|$ as $\lambda\to\infty$. Throughout this paper, we assume that $\lambda$ is sufficiently large. We also note that, it is harmless to replace the interval $I$ by any bounded interval with $0$ in its interior, since the contribution of the integral outside a fixed neighborhood of $0$ is $O(1)$ and is therefore negligible for our purposes.

\subsection{Main results}
Our main results establish a new hierarchy of upper bounds, together with matching lower bounds, reflecting the regularity of the phase.

First, for $C^\alpha$ phases, the maximal growth of $m(\la)$ is logarithmic. 

\begin{proposition}\label{prop0}
	If $\psi\in C^\alpha(I)$ for some integer $\alpha\ge1$, then $m(\la)=O(\log\la)$. This bound is sharp: there exists a real-valued $\psi\in C^\alpha(I)$ such that $|m(\la)|\approx \log\la$.
\end{proposition}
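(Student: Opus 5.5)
The upper bound holds already for $\psi\in C^1$, so I will assume only $\psi\in C^1(I)$. Split the principal value integral at scale $\la^{-1}$:
\[
m(\la)=\mathrm{p.v.}\int_{|t|\le \la^{-1}} e^{i\la\psi(t)}\frac{dt}{t}+\int_{\la^{-1}<|t|\le 1} e^{i\la\psi(t)}\frac{dt}{t}.
\]
\emph{Inner piece.} By oddness of $1/t$ it equals $\int_{|t|\le\la^{-1}}(e^{i\la\psi(t)}-e^{i\la\psi(0)})\,dt/t$. The mean value theorem gives
\[
|e^{i\la\psi(t)}-e^{i\la\psi(0)}|\le \la\|\psi'\|_\infty|t|,
\]
so this piece is $O(1)$.

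\emph{Outer piece.} The trivial bound gives $\int_{\la^{-1}<|t|<1}dt/|t|=2\log\la$. Hence $m(\la)=O(\log\la)$ for every $\alpha\ge1$.

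For sharpness I will construct $\psi\in C^\alpha$, in fact $C^\infty$, with $|m(\la)|\gtrsim\log\la$. I want $\la\psi(t)$ to behave like $\frac{\pi}{2}\operatorname{sgn}(t)$ on a range of scales of logarithmic length. Then the integrand $e^{i\la\psi(t)}/t$ has imaginary part $\frac{1}{|t|}$ there, with no cancellation between $t>0$ and $t<0$. A single $\la$ cannot be served by a fixed function with a power-type zero, since $\la|t|^k\approx1$ only on a range of bounded logarithmic length. Instead take a flat odd phase, for example $\psi(t)=\operatorname{sgn}(t)e^{-1/|t|}$, or a slower-decaying flat function. For a given $\la$, the region where $\la\psi(t)\in[\pi/4,3\pi/4]$ is then only $|t|\approx 1/\log\la$ up to a factor $1+O(1/\log\la)$, which is again too short.

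I therefore expect the construction to be the main obstacle, and I would handle it by working along a lacunary sequence $\la_j$. Choose $\psi$ odd, so that $m(\la)=2i\int_0^1\sin(\la\psi(t))\,dt/t$, and build $\psi$ on the dyadic-in-log blocks $t\in[\la_j^{-1},\la_j^{-1/2}]$. On such a block set $\psi\approx\frac{\pi}{2}\la_j^{-1}$, nearly constant. Connect consecutive blocks by smooth monotone transitions so that on the next block $\psi\approx\frac{\pi}{2}\la_{j-1}^{-1}$. Choose $\la_j$ super-exponentially growing, for example $\la_{j+1}=\la_j^{4}$, so that the blocks are disjoint. Then the contribution of block $j$ to $m(\la_j)$ is about $2i\cdot\frac12\log\la_j$. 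The blocks at smaller $t$ have $\la_j\psi\ll1$ and contribute at most $\sum\la_j\psi\cdot\log(\text{ratio})$, which is small. The blocks at larger $t$ have $\la_j\psi\gg1$, and there I need oscillation: if $\psi$ is nearly constant on those blocks, with derivative small, then integrating by parts over them gives $O(1)$ each, and the transitions are harmless. Smoothness requires bounding derivatives of the transitions. A step of height $\sim\la_{j-1}^{-1}$ over an interval of length $\sim\la_j^{-1/2}$ has $k$-th derivative $\lesssim\la_{j-1}^{-1}\la_j^{k/2}$, which need not be bounded. So I would instead make the plateaus consist of $\psi$ values decaying fast enough that $\psi$ is flat at $0$, using a sequence such as $\la_{j}=\la_{j-1}^{C}$ with transitions placed on $[\la_{j}^{-1/2},\la_{j-1}^{-1}]$ and checking $C^\alpha$ norms. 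Since $\alpha$ is finite, I can tune the exponents to make this work.

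Having $|m(\la)|\gtrsim\log\la$ along a sequence, and $\lesssim\log\la$ always, gives the claimed sharpness.
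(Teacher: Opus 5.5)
Your upper bound is correct and is essentially the paper's argument: split at $|t|=\lambda^{-1}$, use the Lipschitz bound inside and the trivial bound outside.

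The sharpness part has a genuine gap.

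\textbf{A $C^\infty$ example cannot exist.} For smooth $\psi$ there are two cases for $\phi(t)=\psi(t)-\psi(-t)$.
\begin{itemize}
\item If $\phi$ vanishes to finite order at $0$, then $m(\lambda)=O(1)$ by the van der Corput argument of Lemma~\ref{finitetype}.
\item If $|\phi(t)|\le C_N|t|^N$ for every $N$, use $|e^{i\lambda\psi(t)}-e^{i\lambda\psi(-t)}|\le\min\{2,\lambda C_N t^N\}$ and split at $t=(\lambda C_N)^{-1/N}$. This gives $|m(\lambda)|\le \frac2N\log\lambda+O_N(1)$, so $m(\lambda)=o(\log\lambda)$.
\end{itemize}
The second argument uses only flatness, not smoothness. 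So your proposed repair, making $\psi$ flat at $0$, destroys exactly the growth you are trying to produce.

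\textbf{The block construction is not even $C^1$.} Requiring $\psi\approx\frac{\pi}{2}\lambda_j^{-1}$ on $[\lambda_j^{-1},\lambda_j^{-1/2}]$ forces $\psi(t)/t\approx\frac{\pi}{2}$ at $t=\lambda_j^{-1}$. It also forces $\psi(t)/t\approx\frac{\pi}{2}\lambda_j^{-1/2}\to0$ at $t=\lambda_j^{-1/2}$. Hence $\psi'(0)$ does not exist.

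\textbf{Secondary problems.}
\begin{itemize}
\item Your handling of the blocks at larger $t$ is backwards. Integration by parts needs $\lambda_j\psi'$ large, and a nearly constant $\psi$ gives no cancellation at all.
\item The decisive step, ``tune the exponents,'' is never carried out.
\item Even if it worked, a lower bound along a sequence is weaker than the stated $|m(\lambda)|\approx\log\lambda$.
\end{itemize}

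\textbf{The missing idea is to abandon oddness.} Write $\psi=\psi_e+\psi_o$ for the even and odd parts. Then $e^{i\lambda\psi(t)}-e^{i\lambda\psi(-t)}=2i\,e^{i\lambda\psi_e(t)}\sin(\lambda\psi_o(t))$. With $\psi_e=0$ the integrand has no sign, which is what pushes you into tuning $\lambda\psi\approx\pi/2$. Indeed the odd $C^\alpha$ phase $t|t|^\alpha$ gives $m(\lambda)=\frac{2i}{\alpha+1}\int_0^\lambda\frac{\sin u}{u}\,du=O(1)$.

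The paper instead takes the one-sided phase $\psi(t)=\max(t,0)^{\alpha+1}$. It lies in $C^\alpha$ but its $(\alpha+1)$-st derivative vanishes for $t<0$, so the van der Corput step of Lemma~\ref{finitetype} breaks down. Since $\psi(-t)=0$ for $t>0$, we get $-\Re m(\lambda)=\int_0^1(1-\cos(\lambda\psi(t)))\,\frac{dt}{t}$, with a nonnegative integrand. No tuning is needed: wherever $\lambda\psi$ is large the integrand averages to $1/t$. Explicitly, substituting $u=\lambda t^{\alpha+1}$,
\[
m(\lambda)=\frac{1}{\alpha+1}\int_0^\lambda\frac{e^{iu}-1}{u}\,du=-\frac{1}{\alpha+1}\log\lambda+O(1)
\]
for every large $\lambda$, not just along a sequence.

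Your observation that $\lambda|t|^k\approx 1$ only on a range of bounded logarithmic length is true but irrelevant once the integrand has a sign. The whole range $\lambda^{-1/(\alpha+1)}\le t\le 1$ then contributes.
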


Second, for $C^\infty$ phases, we may slightly improve the maximal growth to sub-logarithmic.
\begin{theorem}\label{thm:main0}
	If $\psi\in C^\infty(I)$, then $m(\la)=o(\log\la)$. This bound is sharp: for every integer $k\ge 2$, there exists a real-valued $\psi\in C^\infty(I)$ such that
	\[
	|m(\lambda_n)|\approx \frac{\log \lambda_n}{\logit{k}\lambda_n}	\quad\text{along a sequence }\lambda_n\to\infty.
	\]
\end{theorem}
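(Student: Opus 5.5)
The upper bound comes from Taylor expansion together with the Stein--Wainger uniform bound for polynomial phases, applied at a $\lambda$-dependent scale. The lower bound comes from an odd phase built out of long plateaus in logarithmic scale.

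\emph{Upper bound.} Fix an integer $N\ge1$ and write $\psi=P_N+R_N$. Here $P_N$ is the Taylor polynomial of degree $N$ at $0$, and $|R_N(t)|\le C_N|t|^{N+1}$ on $I$ because $\psi\in C^{N+1}$. Set $\delta=\lambda^{-1/(N+1)}$ and split $I$ into $|t|<\delta$ and $\delta\le|t|\le1$.

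\begin{itemize}
\item On the outer region we use the trivial bound $\int_{\delta\le|t|\le1}dt/|t|=2\log(1/\delta)=\frac{2}{N+1}\log\lambda$.
\item On the inner region we replace $e^{i\lambda\psi}$ by $e^{i\lambda P_N}$. Since $|e^{ia}-e^{ib}|\le|a-b|$, the error is at most $\lambda C_N\int_{|t|<\delta}|t|^{N}dt\ls C_N\lambda\delta^{N+1}=O_N(1)$.
\item The remaining integral $\mathrm{p.v.}\int_{-\delta}^{\delta}e^{i\lambda P_N(t)}dt/t$ is bounded by the classical Stein--Wainger theorem. It gives a constant depending only on $N$, uniformly in the coefficients of $P_N$ and in the interval, since the bound is dilation invariant.
\end{itemize}

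Together these give $\limsup_{\lambda\to\infty}|m(\lambda)|/\log\lambda\le 2/(N+1)$ for every $N$, hence $m(\lambda)=o(\log\lambda)$.

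\emph{Lower bound.} We take $\psi$ odd, so that $m(\lambda)=2i\int_0^1\sin(\lambda\psi(t))\,dt/t$. We also take $\psi$ nondecreasing on $(0,1]$ and flat at $0$. The construction uses rapidly decreasing scales $a_n\downarrow0$ with $a_{n+1}\le a_n^2$ and heights $c_n\downarrow0$. On $[2a_{n+1},a_n/2]$ we set $\psi\equiv c_n$, and on $[a_n/2,2a_n]$ we interpolate between $c_n$ and $c_{n-1}$ by a fixed smooth monotone profile rescaled to that interval.

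The $j$-th derivative on the $n$-th transition is then $\ls_j c_{n-1}a_n^{-j}$. Hence $\psi\in C^\infty$ provided $c_{n-1}=O_j(a_n^{j})$ for every $j$, i.e.\ provided $\log(1/c_{n-1})/\log(1/a_n)\to\infty$. This ratio may tend to infinity as slowly as we like. Take $\lambda_n=\pi/(2c_n)$ and choose $\log(1/a_n)\approx\log\lambda_n/\logit{k}\lambda_n$, which is compatible with the smoothness requirement. The plateau $[2a_{n+1},a_n/2]$ then contributes $2i\sin(\pi/2)\log(a_n/4a_{n+1})\approx\log(1/a_n)\approx\log\lambda_n/\logit{k}\lambda_n$.

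The other contributions must be shown to be $O(1)$, or at least of smaller order.

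\begin{itemize}
\item For $t<2a_{n+1}$ we have $\lambda_n\psi\le\lambda_n c_{n+1}$ there. Bounding $|\sin x|\le x$ gives $\ls\lambda_nc_{n+1}\log(1/a_{n+1})+\lambda_n\int_0^{a_{n+2}}\cdots$, which is tiny provided $c_{n+1}\log(1/a_{n+1})\ll c_n$. This is arranged by making the $c_n$ decay super fast.
\item Each transition has logarithmic length $O(1)$ and so contributes $O(1)$. There are, however, many transitions and plateaus with $m<n$, where $\psi\equiv c_m$ and $\lambda_nc_m$ is huge. On those plateaus the integrand $\sin(\lambda_nc_m)$ is constant, which could add up to $\log(1/a_m)$-size terms.
\end{itemize}

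\emph{Main obstacle.} The hardest step is to control these plateaus with $m<n$ and the corresponding transitions. My first attempt is to perturb the heights so that $\lambda_nc_m\in2\pi\mathbb Z$ for all $m<n$ simultaneously. Since the $c_m$ can be adjusted inductively, this amounts to choosing $c_n=c_m/(4\ell_m+1)$ type relations, making every earlier plateau contribute exactly zero.

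On the transitions with $m\le n$, the phase $\lambda_n\psi$ runs monotonically through a large range. The second mean value theorem (van der Corput with monotone $1/t$ weight in the variable $s=\lambda_n\psi$) then bounds each such piece by $O(1/\text{(number of oscillations)})$, which is summable. If exact integrality is too rigid, the fallback is to make the plateaus for $m<n$ only slightly varying rather than constant, so that oscillation kills them.
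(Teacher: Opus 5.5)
\textbf{Upper bound.} Your argument is correct, and it takes a different route from the paper. The paper first handles the case where $\phi(t)=\psi(t)-\psi(-t)$ has finite order of vanishing (Lemma \ref{finitetype}, a dyadic van der Corput argument). For flat $\phi$ it then splits at $r\approx\lambda^{-1/N}$ using $|\phi(t)|\le C_N|t|^N$. You avoid that dichotomy by subtracting the Taylor polynomial and invoking the uniform bound for truncated polynomial phases. This is the paper's \eqref{pbd}, which after dilation covers every interval $[-\delta,\delta]$. The cost is a much deeper theorem than van der Corput. The gain is an argument uniform in $\psi$: with a Taylor polynomial of degree $\alpha-1$ it gives $|m(\lambda)|\le\frac{2}{\alpha}\log\lambda+O(1)$ for $\psi\in C^\alpha$ directly, which is the refinement the paper only mentions after Proposition \ref{prop0}.

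\textbf{Lower bound.} Your construction can be made to work, but not quite as written.

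\emph{(1) Integrality.} The integrality trick is the right resolution of your ``main obstacle'', and you should commit to it rather than keep a fallback. With $c_n=c_{n-1}/q_n$ and $4\mid q_n$ you get $\lambda_nc_m=\frac\pi2 q_{m+1}\cdots q_n\in2\pi\mathbb Z$ for all $m<n$. Your stated relation $c_n=c_m/(4\ell_m+1)$ instead gives $\lambda_nc_m\in\frac\pi2+2\pi\mathbb Z$. Then the earlier plateaus contribute $+2\log(\cdot)$ rather than $0$. This is harmless, since the sign agrees with the main term, but it contradicts what you wrote.

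\emph{(2) Transitions.} The second-mean-value argument fails as stated. In the variable $s=\lambda_n\psi$ the weight is $1/(t\lambda_n\psi'(t))$. It is not monotone, and it blows up at both ends of each transition because $\psi'$ vanishes there, so Bonnet's formula gives nothing. The argument is also unnecessary. Each transition has logarithmic length $\log 4$ and so contributes $O(1)$, and only $O(n)$ of them lie in $[a_{n+1}/2,1]$. Meanwhile $a_{n+1}\le a_n^2$ forces $\log(1/a_n)\gtrsim 2^n$, so the transitions are negligible.

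\emph{(3) Bookkeeping.} The plateau $[2a_{n+1},a_n/2]$ has logarithmic length $\approx\log(1/a_{n+1})$, not $\log(1/a_n)$. Also, on $[a_{n+1}/2,2a_{n+1}]$ the phase reaches $c_n$, not just $c_{n+1}$, so your tail estimate needs adjusting. The clean choice is to tune $\log(1/a_{n+1})\approx\log(1/c_n)/\log^{(k)}(1/c_n)$. This is exactly compatible with the smoothness condition at the $(n+1)$-st transition. The trivial bound $|m(\lambda_n)|\le 2\log(2/a_{n+1})+O(1)$ then supplies the upper half of $\approx$, which you did not address.

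\textbf{Comparison with the paper.} The paper takes $\psi\equiv 0$ on $t\le 0$, so $-\Re m(\lambda)=\int_0^1(1-\cos\lambda\psi)\,dt/t\ge0$ and no cancellation ever has to be controlled. It uses many \emph{short} dyadic plateaus, each of logarithmic length $\log(4/3)$, with heights $\pi/Q_j$, where $Q_j=q_1\cdots q_j$ and the $q_j$ are odd and slowly growing. The choice $\lambda_n=Q_n$ makes the first $n$ plateaus all contribute simultaneously. The growth $n\approx\log\lambda_n/\log^{(k)}\lambda_n$ comes from $\log Q_n\approx n\log^{(k-1)}n$. The arithmetic is essential there precisely because the plateaus are short.

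In your design a single long plateau carries the whole growth. So if you take $\psi$ one-sided instead of odd, and set $\lambda_n=\pi/c_n$, positivity gives the lower bound from that one plateau alone. You then need neither the integrality condition nor any analysis of transitions or earlier plateaus. It is your choice of an odd phase that creates the main obstacle in the first place.
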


Third, we may further improve the maximal growth for phases in the Gevrey classes. Recall that for $s\ge1$, a function $\psi\in C^\infty(I)$ belongs to the Gevrey class
$G^s(I)$ if there exists a constant $K>0$ such that for all $n\ge0$
\[
\sup_{t\in I} |\psi^{(n)}(t)|
\le K^{n+1} (n!)^s.
\]
In particular, $G^1=C^\omega$ is exactly the class of real-analytic functions. Thus Gevrey classes describe a regularity intermediate between smoothness and analyticity. Since their introduction in \cite{gev} in connection with regularity properties of the fundamental solution of the heat operator, Gevrey classes have been employed in a variety of contexts within the general theory of linear partial differential operators, including hypoellipticity, local solvability, and the propagation of singularities. For a comprehensive definition and a detailed discussion of Gevrey classes and their applications to linear partial differential operators, we refer the reader to \cite{rod93}.
\begin{theorem}\label{thm:main1}
	Let $\psi\in G^s(I)$ with $s>1$. Then $m(\la)=O(\log\log\la)$. 
	This bound is sharp: there exists a real-valued $\psi\in G^s(I)$ such that
	\[
	|m(\lambda_n)| \approx \log\log \lambda_n
	\quad\text{along a sequence }\lambda_n\to\infty.
	\]
\end{theorem}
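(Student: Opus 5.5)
We prove the upper bound by reducing to the behaviour of $\psi$ near the origin, and then build a flat Gevrey phase for sharpness.

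\emph{Upper bound.} Write $\psi(t)=\psi(0)+\phi(t)$; the constant factor $e^{i\lambda\psi(0)}$ is harmless. There are two cases.

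If $\phi$ is not flat at $0$, let $k\ge1$ be the first index with $\phi^{(k)}(0)\neq0$, so $\phi(t)=ct^k(1+O(t))$. Then the classical Stein--Wainger argument applies. On $|t|\le\lambda^{-1/k}$ we expand $e^{i\lambda\phi}-1=O(\lambda|t|^k)$, and the principal value removes the constant term. For $|t|\ge\lambda^{-1/k}$ we use van der Corput with the dyadic decomposition $|t|\approx 2^{j}\lambda^{-1/k}$. This gives $m(\lambda)=O(1)$.

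The real case is flat $\phi$. Here the key input is the sharp Gevrey growth estimate: if $\phi\in G^s$ is flat at $0$ and not identically zero near $0$, then Taylor's formula with the bound $|\phi^{(n)}|\le K^{n+1}(n!)^s$, optimized in $n$, gives
\[
|\phi(t)|\le C\exp\bigl(-c|t|^{-1/(s-1)}\bigr).
\]
The same bound holds for the symmetric combinations that survive the principal value.

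We then split $I$ at the threshold $\delta$ where $\lambda\exp(-c\delta^{-1/(s-1)})=1$, that is, $\delta\approx(\log\lambda)^{-(s-1)}$. On $|t|\le\delta$ we have $|\lambda\phi|\lesssim\lambda\exp(-c|t|^{-1/(s-1)})$, and $\int_{|t|\le\delta}\lambda|\phi(t)|\,dt/|t|=O(1)$ by the super-exponential decay. Hence $\mathrm{p.v.}\int_{|t|\le\delta}$ equals $\mathrm{p.v.}\int 1\cdot dt/t+O(1)=O(1)$.

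On $\delta\le|t|\le1$ we bound trivially by $\int_\delta^1 dt/t=\log(1/\delta)\approx(s-1)\log\log\lambda$. This already gives $O(\log\log\lambda)$.

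Since a flat nonzero function has no lower bound, only the upper estimate on $\phi$ is needed. That is why the trivial bound on the outer region suffices, and the case split above is complete.

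\emph{Sharpness.} The expected obstacle is constructing a flat $\psi\in G^s$ for which the outer region really contributes $\approx\log\log\lambda_n$. Take $\psi$ odd, so that $m(\lambda)=2i\int_0^1\sin(\lambda\psi(t))\,dt/t$, with $\psi(t)=\exp(-t^{-1/(s-1)})$ for $t>0$. This is the standard Gevrey-$s$ flat function, and we extend it oddly. It is smooth across $0$ because all derivatives vanish there, and it remains $G^s$.

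For $t\in[\delta_\lambda,1]$ write $u=\lambda\psi(t)$, which decreases from $\lambda e^{-1}$ down to $1$ as $t$ goes from $1$ to $\delta_\lambda$. Then
\[
\frac{dt}{t}=(s-1)\frac{du}{u\,\log(\lambda/u)}.
\]
Therefore
\[
\int_{\delta_\lambda}^1\sin(\lambda\psi)\,\frac{dt}{t}=(s-1)\int_1^{\lambda/e}\frac{\sin u}{u\log(\lambda/u)}\,du.
\]
The factor $1/\log(\lambda/u)$ is monotone, so integrating by parts gives $O(1)$. So this monotone phase does \emph{not} produce growth, and we must modify it.

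The fix is to design $\psi$ flat at $0$ whose oscillation at the scale $t\approx(\log\lambda_n)^{-(s-1)}$ is arranged so that $\sin(\lambda_n\psi(t))\ge c$ on most of $[\delta_n,1]$ in the $dt/t$ measure. We choose lacunary $\lambda_n$ and, on dyadic-in-$\log$ blocks $t\in[2^{-j-1},2^{-j}]$, make $\lambda_n\psi(t)\equiv\pi/2\pmod{2\pi}$ up to small error. One way to do this is $\psi(t)=\sum_j a_j\chi_j(t)$, with Gevrey-$s$ bump functions $\chi_j$ (their Gevrey constants grow like $\exp(C2^{j/(s-1)})$) and tiny amplitudes $a_j\approx\exp(-c2^{j/(s-1)})$ that keep the sum in $G^s$.

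The hardest step is checking the simultaneous resonance $\lambda_n a_j\in\pi/2+2\pi\mathbb{Z}$ for all $j\lesssim\log\log\lambda_n$ while preserving the Gevrey bounds. We will handle it by choosing the $a_j$ inductively, with $\lambda_n$ growing super-exponentially.
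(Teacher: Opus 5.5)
Your upper bound is correct and is essentially the paper's argument. Finite-order vanishing gives $O(1)$ by van der Corput. In the flat case, the Gevrey flat-point bound $|\phi(t)|\lesssim\exp(-c|t|^{-1/(s-1)})$ combined with the split at $\delta\approx(\log\lambda)^{-(s-1)}$ gives $O(\log\log\lambda)$. The paper derives that flat-point bound from Bang's lemma, but notes that the Taylor--Legendre optimization you use also works for $G^s$. Working with $\psi-\psi(0)$ instead of the odd part $\psi(t)-\psi(-t)$ is harmless.

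The gap is in the sharpness half. Your computation for the odd extension is right; the second mean value theorem even gives $O(1/\log\lambda)$. However, the conclusion that the monotone flat phase ``does not produce growth'' is an artifact of making $\psi$ odd. Oddness turns $e^{i\lambda\psi(t)}-e^{i\lambda\psi(-t)}$ into $2i\sin(\lambda\psi(t))$, whose sign oscillates.

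The missing idea is to extend by zero instead: take $\psi(t)=0$ for $t\le 0$ and $\psi(t)=e^{-t^{-1/(s-1)}}$ for $t>0$, which is still flat and in $G^s$. Then $m(\lambda)=\int_0^1(e^{i\lambda\psi(t)}-1)\,dt/t$. Hence $-\Re m(\lambda)=\int_0^1(1-\cos(\lambda\psi(t)))\,dt/t$ has a nonnegative integrand, and nothing can cancel. Your own substitution gives
\[
-\Re m(\lambda)=(s-1)\int_0^{\lambda/e}\frac{1-\cos u}{u\log(\lambda/u)}\,du .
\]
The weight $u\mapsto 1/(u\log(\lambda/u))$ is decreasing on $[1,\lambda/e]$, and $1-\cos u\ge 1$ on half of each period. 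So this is $\gtrsim (s-1)\int_{4\pi}^{\lambda/e}\frac{du}{u\log(\lambda/u)}\approx (s-1)\log\log\lambda$. This is exactly the paper's example, taken along $\lambda_n=2\pi n$.

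The lacunary-plateau construction you propose in its place is not carried out, and three steps are missing.
\begin{enumerate}
\item The simultaneous resonance $\lambda_n a_j\in\pi/2+2\pi\mathbb{Z}$ for all $j\lesssim\log\log\lambda_n$ is only promised, not shown.
\item You assert without proof that there exist $G^s$ bumps $\chi_j$ on $[2^{-j-1},2^{-j}]$ whose Gevrey constants are dominated by $1/a_j$. This needs a genuine construction: rescaling one fixed $G^s$ bump as $\chi(2^jt)$, with amplitudes $a_j\approx\exp(-c2^{j/(s-1)})$, only lands in $G^{2s-1}$.
\item You keep the odd extension, so each transition region of $\chi_j$ contributes an $O(1)$ term of indefinite sign. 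That is the same size as a plateau contribution, so the blocks could cancel even with exact resonance.
\end{enumerate}
As written, the lower bound is not established.
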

For $\psi\in G^1(I)$, i.e. the analytic class, it is known that $m(\la)=O(1)$. See Pan \cite{pan93}.

Next, we introduce some refinements between the Gevrey classes and the analytic class. These allow us to see clearly how the maximal growth of $m(\la)$ improves from $O(\log\log\la)$ to $O(1)$. For $s\ge 1$ and an integer $k\ge 1$, a function $\psi\in C^\infty(I)$ belongs to the refined Gevrey class
$G_k^s(I)$ if there exists a constant $K>0$ such that for all sufficiently large $n$
\[
\sup_{t\in I} |\psi^{(n)}(t)|
\le K^{n+1} n!Q_{k,s}(n)^{n},
\]
where
\[
Q_{k,s}(n)=(\log^{(k)} n)^s\prod_{j=1}^{k-1}\log^{(j)} n
\]
and $\log^{(k)} n$ is the $k$-fold iterated logarithm. For instance, we have $Q_{1,s}(n)=(\log n)^s$ and $Q_{2,s}(n)=(\log n)(\log\log n)^s$. Thus these classes are refinements between $\bigcap_{s>1}G^{s}$ and $G^1=C^\omega$. Moreover, by the Denjoy--Carleman theorem (see H\"ormander \cite[Theorem 1.3.8]{hor1}), $G_k^s$ is quasianalytic if and only if $s=1$, since

\[
\sum_{n\gg 1}\frac{1}{n(\log n)\cdots(\log^{(k-1)}n)(\log^{(k)} n)^s}=\infty \iff s=1.
\]

The following theorem gives further improvements for the refined Gevrey classes and fills the gap between the Gevrey classes and the analytic class.

\begin{theorem}\label{thm:main2}
	 Let $\psi\in G_k^s(I)$ with $k\ge1$ and $s>1$. Then $m(\lambda)=O(\log^{(k+2)}\lambda).$
	This bound is sharp: there exists a real-valued $\psi\in G_k^s(I)$ such that
	\[
	|m(\lambda_n)| \approx \log^{(k+2)} \lambda_n
	\quad\text{along a sequence }\lambda_n\to\infty.
	\]
\end{theorem}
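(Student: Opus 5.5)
The plan has two halves. The upper bound comes from reducing to an estimate on how fast $\psi$ can grow away from a flat point. The lower bound comes from a lacunary construction with Gevrey-type bumps whose flatness matches that estimate.

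\textbf{Upper bound.} Start with $\psi(0)=0$. If $\psi$ is not flat at $0$, some $\psi^{(j)}(0)\neq0$, and the standard van der Corput argument for finite-type phases gives $m(\la)=O(1)$. So assume $\psi^{(n)}(0)=0$ for all $n$.

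For any $\delta\in(0,1)$, split $I$ into $|t|<\delta$ and $\delta\le|t|\le1$.
\begin{itemize}
\item On $|t|<\delta$, use oddness of $1/t$ and $|e^{ia}-e^{ib}|\le|a-b|$ to bound the principal value piece by
\[
\la\int_0^\delta\frac{|\psi(t)-\psi(-t)|}{t}\,dt\ls \la\sup_{|t|\le\delta}|\psi(t)|.
\]
\item On $\delta\le|t|\le1$, the trivial bound is $2\log(1/\delta)$.
\end{itemize}
This gives $|m(\la)|\ls \la\,\omega(\delta)+\log(1/\delta)$, where $\omega(\delta)=\sup_{|t|\le\delta}|\psi(t)|$.

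The key ingredient is the sharp flat-point growth estimate. Taylor's theorem with vanishing jets gives $|\psi(t)|\le K^{n+1}n!\,Q_{k,s}(n)^n|t|^n/n!$ for every $n$. Optimizing in $n$ yields $\omega(\delta)\le \exp(-c\,h(1/\delta))$, where $h(r)\approx \sup_n n\log\bigl(r/(KQ_{k,s}(n))\bigr)$. Taking $n\approx r/(eKQ_{k,s}(r))$ gives $h(r)\gs r/Q_{k,s}(r)$.

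Now choose $\delta$ so that $\la\,\omega(\delta)\le1$, i.e.\ $c/(\delta Q_{k,s}(1/\delta))\ge\log\la$. Then $1/\delta\approx \log\la\cdot Q_{k,s}(\log\la)$, so
\[
\log(1/\delta)=\log\log\la+O(\log\log\log\la).
\]
This is only $O(\log\log\la)$, which is the Gevrey bound and too crude for $G^s_k$.

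The main obstacle is that this scheme must be refined. The trivial bound on $\delta\le|t|\le1$ is wasteful because there the phase is still tiny unless $\la|\psi|\gs1$. The refinement runs as follows.
\begin{itemize}
\item Let $t_\la$ be the scale where $\la\,\omega(t)\approx1$.
\item Split the region $|t|\ge t_\la$ dyadically. On each dyadic shell, estimate the oscillatory piece by van der Corput using a derivative lower bound for $\psi$ on that shell. This is unavailable in general, so instead use the argument of Nagel--Wainger/Pan: partition according to where $\psi'$ and $\psi$ are comparable.
\item Only dyadic shells where $\psi$ varies from size $1/\la$ to size $1$ in a non-monotone way contribute $O(1)$ each.
\item The number of such shells is the number of dyadic scales between $t_\la$ and the scale where $\omega(t)\approx1/\la^{1/2}$.
\end{itemize}
By the sharp growth estimate, $\log(1/t)\approx \log h^{-1}(\log\la)$ varies only by $O(1)$ relative changes. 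The count of shells is therefore governed by $\log(1/t_\la)-\log(1/t_{\sqrt\la})$, and this is not small. So I would instead count via the measure $dt/t$ weighted by the fraction of scales where $\la\psi$ has no oscillation.

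I would aim for the bound $|m(\la)|\ls 1+\#\{j: 2^{-j}\in[t_\la,1],\ \text{shell } j \text{ non-oscillatory}\}$. The growth condition $\log\log(1/\omega(t))\approx \log(1/t)-\log Q_{k,s}(1/t)$ then forces this count to be $\ls \log Q_{k,s}(\log\la)\approx\log^{(k+2)}\la$ after inverting. Making this precise is where I expect the real difficulty.

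\textbf{Sharpness.} Build $\psi$ as a lacunary sum of the form $\psi(t)=\sum_j \epsilon_j\,\phi((t-a_j)/r_j)\,\mathbf 1_{t>0}$, using bumps with Gevrey-type flatness of order $Q_{k,s}$. Such bumps exist because $G_k^s$ is non-quasianalytic for $s>1$, by Denjoy--Carleman. The bumps are placed on scales $a_j$ chosen so that, for $\la_n$, $\la_n\psi\approx$ const$\cdot\pi$ on an interval $[t_{n}^-,t_n^+]$ with $\log(t_n^+/t_n^-)\approx\log^{(k+2)}\la_n$, while $\psi$ is odd-symmetric-breaking and negligible elsewhere. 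The contribution of this interval is $\approx\int_{t_n^-}^{t_n^+}dt/t$. The Gevrey-$G^s_k$ bounds for the sum are checked from the bump estimates and the separation of scales.
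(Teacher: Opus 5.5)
There is a genuine gap in the upper bound, and it is not where you locate it. The bottleneck is the flat-point estimate, not the trivial bound on $\delta\le|t|\le1$. Your choice $n\approx r/(eKQ_{k,s}(r))$ is far from optimal. Taking $n$ with $KQ_{k,s}(n)\approx r/e$ gives $h(r)\gtrsim Q_{k,s}^{-1}(cr)$, e.g.\ $\exp(cr^{1/s})$ when $k=1$, and then your first crude split already gives $\log(1/\delta)\approx\log^{(3)}\lambda$. That is the correct order for $k=1$. For $k\ge2$, however, $Q_{k,s}(n)\ge\log n$, so $h(r)\lesssim e^{r/K}\log r$. Hence no choice of $n$ in the Taylor bound gets below $\log^{(3)}\lambda$: the Taylor--Legendre method is simply too weak for $G^s_k$.

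The missing idea is Bang's lemma. Suppose $\sum_{j\ge\ell}1/\mu_j>4Kx$, where $\mu_j=M_j/M_{j-1}\approx jQ_{k,s}(j)$ is nondecreasing. Then you can join $x$ to the flat point by a chain of intervals of lengths proportional to $1/\mu_j$, and iterate the fundamental theorem of calculus from high derivatives down to $\phi$ itself. This yields $|\phi(x)|\lesssim2^{-\ell}$. Equivalently, $|\phi(t)|\lesssim2^{-N}$ whenever $0<t\le c\,T_M(N)$, where $T_M(N)=\sum_{j\ge N}1/\mu_j\approx(\log^{(k)}N)^{1-s}$. That is, $|\phi(t)|\le A\exp\bigl(-B\exp^{(k)}(\gamma|t|^{-1/(s-1)})\bigr)$ for $\phi(t)=\psi(t)-\psi(-t)$.

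With this in hand, your first scheme closes with no oscillation at all. The threshold $t_\lambda$ where $\lambda|\phi|\approx1$ satisfies $\log(1/t_\lambda)\approx(s-1)\log^{(k+2)}\lambda$. The range $t<t_\lambda$ contributes $O(1)$ thanks to the iterated-exponential decay; the paper organizes this as the shell sum $\sum_N\min\{1,\lambda2^{-N}\}\log\frac{T_M(N)}{T_M(N+1)}$. The range $[t_\lambda,1]$ contributes at most $2\log(1/t_\lambda)$.

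The oscillatory refinement you propose cannot substitute for this. On the range between your $\delta$ and the true threshold $t_\lambda$, the quantity $\lambda|\phi|$ is in fact $\lesssim1$ and decays iterated-exponentially as $t$ decreases. So there is no oscillation to exploit, and showing this range is harmless amounts to proving the sharp flat-point bound. Above $t_\lambda$ there is no cancellation to gain either: for the extremal phase, $-\Re m(\lambda)=\int_0^1(1-\cos\lambda\psi(t))\,dt/t$ is comparable to $\log(1/t_\lambda)$. Your shell-counting claims are unproved and, as you acknowledge, not made precise.

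The sharpness part is also only a sketch. Rescaling a bump to scale $r_j$ multiplies its $n$-th derivative by $r_j^{-n}$, so the profiles, heights and positions must be tuned to the sharp flat-point rate. You give none of these, nor the $G^s_k$ verification. The paper needs no lacunary construction. Take the single function $\psi(t)=\exp(-E_k(t^{-1/(s-1)}))$ for $t>0$ and $\psi=0$ for $t\le0$. The substitution $u=\psi(t)$ gives $dt/t=(s-1)w(u)\,du$ with $w(u)=\bigl(u\log(1/u)\log^{(2)}(1/u)\cdots\log^{(k+1)}(1/u)\bigr)^{-1}$. Hence $-\Re m(2\pi n)\gtrsim\int_{1/n}^{c_0}w\approx\log^{(k+2)}n$. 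The real work there is checking $\psi\in G^s_k$, via iterated majorant-series bounds for $E_k$, $t^{-a}$ and $e^{-g}$.
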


 If $\psi\in G_k^1(I)$, then $\psi$ is quasianalytic, and it is also known that $m(\lambda)=O(1)$ by Pan \cite{pan93}. Furthermore, if $P(t)$ is a real polynomial of degree $d$, then a much stronger result holds:
\begin{equation}\label{pbd}
	\Big|\mathrm{p.v.}\int_{-1}^1\frac{e^{i P(t)}}{t}dt\Big|\le C_d
\end{equation}
 where $C_d$ depends only on $d$, and the best constant satisfies $C_d\approx \log d$. This follows from the proof of Parissis \cite[(3.7), (4.4)]{pa08}. Similarly, if $R(t):=P(t)/Q(t)$ is a rational function where $P,Q$ are real polynomials, then 
 \begin{equation}\label{rbd}
 	\Big|\mathrm{p.v.}\int_{-1}^1\frac{e^{i R(t)}}{t}dt\Big|\le A
 \end{equation}
where $A$ depends only on the degrees of $P$ and $Q$ and not on their coefficients. This follows directly from the proof of Folch--Gabayet and Wright \cite[Theorem 1.1]{fw03}. Their results are about the integral on the whole real line, while the proof still works for bounded intervals.

\begin{remark}\label{rm1}{\rm The parameter $\alpha$ in Proposition \ref{prop0} and the parameter $s$ in Theorems \ref{thm:main1} and \ref{thm:main2} affect only the implicit constants and do not change the maximal growth in an essential way. Suppose that $\varphi$ is a smooth function on $\R^n$ and that $K$ is an odd, $-n$-homogeneous function on $\R^n$ that is integrable on the unit sphere. We have the standard reduction
	\begin{equation}
		\Big|\mathrm{p.v.}\int_{|x|\le 1}e^{i\la\varphi(x)}K(x)dx\Big|\le \frac12\|K\|_{L^1(S^{n-1})}\sup_{\theta\in S^{n-1}}\Big|\mathrm{p.v.}\int_{-1}^1\frac{e^{i\la\varphi(t\theta )}}tdt\Big|.
	\end{equation}
	Thus higher-dimensional analogues also hold whenever the derivatives of the phase $\varphi(t\theta)$ with respect to $t$ satisfy estimates uniformly in $\theta\in S^{n-1}$. For instance, these hold when $\varphi$ is a real polynomial or rational function by  \eqref{pbd} and \eqref{rbd}. }
\end{remark}

\begin{remark}\label{rm2}{\rm
	Given the gap between the Gevrey classes and the $C^\infty$ class, it is also important to examine classes of smooth functions that lie outside every Gevrey class. For instance, Jézéquel \cite{jez} established a trace formula conjectured by Dyatlov--Zworski \cite{dz2016} for Anosov flows in dynamical systems that holds for certain intermediate regularity classes. We will discuss them in Section \ref{sec7} and their extensions in Section \ref{sec8}. We will see how the maximal growth of $m(\la)$ increases to the universal bound $o(\log\la)$ of the $C^\infty$ class as the classes become larger.
	}
\end{remark}

\subsection{Background}
The problem is directly related to the singular oscillatory integral operators $T_\lambda$ of the form
\begin{equation}\label{Tlambda}T_\lambda f(t)={\rm p.v.}\int e^{i\lambda\phi(t,s)}(t-s)^{-1}a(t,s)f(s)ds,\end{equation}
where $\phi$ is smooth, $\lambda$ is real, and $a\in C_0^\infty(\mathbb{R}^2)$.
These operators and their higher-dimensional analogues have been studied by Stein--Wainger \cite{sw70,sw78}, Phong--Stein \cite{ps86}, Ricci--Stein \cite{rs87}, Pan \cite{pan91}, Seeger \cite{seeger94}, and Carbery--P\'erez \cite{cp99}. Phong--Stein \cite[p.~117]{ps86} showed that uniform $L^2(\mathbb{R})$ estimates for $T_\lambda$ imply the $L^2(\mathbb{R}^2)$ boundedness of the Hilbert transform $\mathcal{H}$ along variable curves: 
\begin{equation}\label{ht}
	\|\mathcal{H}\|_{L^2(\mathbb{R}^2)\to L^2(\mathbb{R}^2)}\le \sup_{\lambda\in \mathbb{R}}\|T_\lambda\|_{L^2(\mathbb{R})\to L^2(\mathbb{R})}.
\end{equation} Here $\mathcal{H}$ is defined initially on functions in $C_0^\infty(\mathbb{R}^2)$ by
\[\mathcal{H}f(x)=\eta(x){\rm p.v.}\int_{-\delta}^{\delta}f(x_1-t,x_2-\phi(x_1,x_1-t))\frac{dt}{t},\]
where $\eta\in C_0^\infty(\mathbb{R}^2)$ and $\delta>0$ is suitably small. Pan \cite{pan91} proved that $T_\lambda$ is uniformly bounded on $L^2(\mathbb{R})$ if one imposes a weak finite type condition. Later, Seeger \cite{seeger94}, Carbery--P\'erez \cite{cp99} investigated certain flat cases in which the finite-type condition fails.

In the translation-invariant case $\phi(t,s)=\psi(t-s)$, Nagel--Vance--Wainger--Weinberg \cite{nvww} proved necessary and sufficient conditions under the assumption that $\psi$ is even (or odd) and convex. Nagel--Wainger \cite[Theorem 4.1]{nw} constructed an odd smooth function $\psi$ on $[-1,1]$ such that the Hilbert transform $\mathcal{H}$ along the curve $(t,\psi(t))$ is unbounded on $L^2(\mathbb{R}^2)$. Indeed, the function $\psi$ is flat at $0$, so the convolution kernel
\begin{equation}\label{nwker}
	m(x, y)={\rm p . v .} \int_{-1}^{1} e^{i(x t+y \psi(t))} \frac{d t}{t}
\end{equation}
is unbounded on $\mathbb{R}^2$. Note that $m(0,\la)$ coincides with $m(\la)$ defined in \eqref{mla} with $I=[-1,1]$. Thus \cite[Theorem 4.1]{nw} is also implied by the unboundedness of $m(\la)$. Pan \cite{pan93} proved boundedness under the assumption that $\psi$ does
not vanish to infinite order at $0$. Motivated by the study of eigenfunction restriction estimates, Wang--Zhang \cite[Problem 4 in Section 4]{wz21} asked whether $m(\lambda)$ remains bounded for arbitrary smooth $\psi$. 

Stein--Wainger \cite{sw70} considered this type of singular oscillatory integrals with $I=\mathbb{R}$, and they showed that if $\psi(t)$ is a polynomial of degree $d$, then
\[\Big|{\rm p . v .} \int_{\mathbb{R}} e^{i\lambda\psi(t)} \frac{d t}{t}\Big|\le C_d\]
where the constant $C_d$ depends only on $d$. See also Parissis \cite{pa08}, Papadimitrakis--Parissis \cite{pp10}, and Al-Qassem, Cheng, and Pan \cite{acp14}. For rational phases $\psi$, uniform bounds still hold; see Folch--Gabayet and Wright \cite{fw03,fw032}, Wang--Wu \cite{ww23}, and Al-Qassem, Cheng, and Pan \cite{acp25}. However, this integral may diverge for many real-analytic phases, such as $\exp(t)$ and $\arctan(t)$. Nevertheless, our results show that if we replace $\mathbb{R}$ by a bounded interval, then an interesting new hierarchy emerges for the maximal growth of $m(\la)$ with respect to the regularity of the phase $\psi$. Moreover, our results give a negative answer to the problem posed by Wang--Zhang \cite[Problem 4 in Section 4]{wz21}. Furthermore, our results imply that \eqref{nwker} is unbounded on the $y$-axis, so in particular this gives a new proof of \cite[Theorem 4.1]{nw} by Nagel--Wainger.

\subsection{Proof Outline.} First, we use a van der Corput estimate to handle phases of finite type and reduce the problem to flat phases. Second, we use either Bang's lemma or the Taylor--Legendre method to prove sharp estimates on the growth of a phase near a flat point. Third, we use these sharp flat-point estimates to establish the oscillatory integral bounds. In particular, we obtain an abstract result (Theorem \ref{thm:main3}) for general Denjoy--Carleman classes and use it to establish Theorems \ref{thm:main1} and \ref{thm:main2}. Finally, we explicitly construct examples in the relevant classes to show that the estimates are sharp.

\subsection{Paper Structure.} In Section \ref{sec2}, we prove some key lemmas on the phase and Proposition \ref{prop0}. In Section \ref{sec3}, we prove Theorem \ref{thm:main0} for general smooth phases. In Section \ref{sec4}, we prove Theorem \ref{thm:main3} for phases in the Denjoy--Carleman class. In Section \ref{sec5}, we prove Theorem \ref{thm:main1} for phases in the Gevrey classes. In Section \ref{sec6}, we prove Theorem \ref{thm:main2} for phases in the refined Gevrey classes. In Section \ref{sec7}, we discuss the intermediate regularity classes of smooth functions introduced by Jézéquel \cite{jez}. In Section \ref{sec8}, we further discuss larger classes of smooth functions that lie outside every Gevrey class. In Section \ref{sec9}, we prove the derivative bounds for the sharpness examples.

\subsection{Notation.} Throughout this paper, $X\ls Y$ means $X\le CY$ for some positive constant $C$ independent of $\la$. If $X\ls Y$ and $Y\ls X$, we write $X\approx Y$.  If $X\ge CY$ for some sufficiently large constant $C>1$, we write $X\gg Y$. 

\subsection{Acknowledgments.}
The authors would like to thank Shaozhen Xu for helpful discussions and comments. The authors are supported in part by the National Key R\&D Program of China 2024YFA1015300. C.Z. is also supported in part by NSFC Grant 12371097. Z.Z. is also supported in part by NSFC Grant 12501065.
 
\section{Preliminaries}\label{sec2}
We rewrite the principal value integral as
\begin{equation}\label{eq:pv}
	m(\lambda)
	=
	\int_0^1 \frac{e^{i\lambda\psi(t)}-e^{i\lambda\psi(-t)}}{t}\,dt.
\end{equation}
Let
\begin{equation}\label{phidef}
	\phi(t)=\psi(t)-\psi(-t).
\end{equation}
	If $\phi\equiv 0$, then $\psi$ is even and $m(\lambda)=0$ identically. If $\phi(t)$ has a finite order of vanishing at $0$, then standard
oscillatory integral estimates imply that $m(\lambda)$ is bounded. See e.g. Pan \cite[Lemma 2.3]{pan93}. We give a direct proof for completeness.

\begin{lemma}\label{finitetype}
	Suppose that $\phi$ has a finite order of vanishing at 0. Then $m(\la)=O(1)$.
\end{lemma}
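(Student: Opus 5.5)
Write $m(\la)=\int_0^1 \frac{e^{i\la\psi(t)}-e^{i\la\psi(-t)}}{t}\,dt$ as in \eqref{eq:pv}. The plan is to split the range of integration at a scale where the phase difference $\la\phi(t)$ becomes of size one. On the small piece we use the cancellation between $e^{i\la\psi(t)}$ and $e^{i\la\psi(-t)}$. On the large piece we treat the two exponentials separately with van der Corput's lemma. We need $\psi\in C^{k+1}$ near $0$, where $k$ is the order of vanishing of $\phi$; this is harmless for the smooth phases under consideration. Write $\phi(t)=ct^k(1+O(t))$ with $c\neq0$, so $|\phi(t)|\approx t^k$ on $(0,\delta]$ for a small fixed $\delta$. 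The part of the integral over $[\delta,1]$ is $O(1)$ trivially, since there $1/t$ is bounded.

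\textbf{Small scale.} Set $r=\la^{-1/k}$. On $(0,r]$ we use $|e^{ia}-e^{ib}|\le|a-b|$, which gives
\[
\int_0^r\frac{\la|\phi(t)|}{t}\,dt\ls \la\int_0^r t^{k-1}\,dt\ls 1 .
\]

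\textbf{Large scale.} On $[r,\delta]$ we cannot control $e^{i\la\psi(t)}$ and $e^{i\la\psi(-t)}$ individually by the difference alone. The key observation is that some derivative of order $\le k$ of $\psi$ (or of $t\mapsto\psi(-t)$) is bounded below near $0$. Write $\psi=\psi_e+\psi_o$ with $\psi_o=\phi/2$.
\begin{itemize}
\item If $\psi_e$ has a nonzero derivative of order $j<k$ at $0$, take the least such $j\ge1$. Then $\psi^{(j)}(0)\neq0$, since $\psi_o^{(j)}(0)=0$ for $j<k$.
\item Otherwise $\psi^{(k)}(0)=\psi_o^{(k)}(0)\pm\psi_e^{(k)}(0)$. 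If $k$ is odd, $\psi_e^{(k)}(0)=0$, so $\psi^{(k)}(0)=\phi^{(k)}(0)/2\neq0$.
\end{itemize}
In either case some $\psi^{(j)}$, $1\le j\le k$, is nonzero near $0$, and $|\psi^{(j)}|\ge c_0>0$ on $[-\delta,\delta]$ after shrinking $\delta$. Van der Corput's lemma with the $BV$ weight $1/t$ (Stein's corollary) then gives
\[
\Big|\int_a^b e^{i\la\psi(\pm t)}\,\frac{dt}{t}\Big|\ls (\la c_0)^{-1/j}\,a^{-1}\quad\text{for } r\le a<b\le\delta .
\]
We decompose $[r,\delta]$ dyadically into intervals $[2^\ell r,2^{\ell+1}r]$. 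Each piece contributes $\ls\min\bigl(1,\la^{-1/j}(2^\ell r)^{-1}\bigr)$ for each exponential. With $j\le k$ we have $\la^{-1/j}\le\la^{-1/k}=r$, so the sum over $\ell$ is a geometric series $\sum_\ell 2^{-\ell}\ls1$.

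\textbf{Main obstacle.} The real difficulty is the case $j=1$ with $\psi'(0)\neq0$ versus degenerate cases. If $\psi$ is nearly even to all orders below $k$ while its even part has vanishing low derivatives, the argument above covers the situation. The remaining case to check is $k$ even, because then $\phi^{(k)}(0)=0$ automatically and $k$ cannot actually be even. Indeed $\phi$ is odd, so its order of vanishing is odd, and the second bullet always applies.

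A cleaner route would avoid this case analysis. We could run van der Corput with respect to $\phi$ itself by substituting $u=-t$ to write $m(\la)=\int_0^1$ of the difference. Combining the two pieces via $\psi(t)=\psi_e(t)+\phi(t)/2$ gives $e^{i\la\psi_e}(e^{i\la\phi/2}-e^{-i\la\phi/2})$. This is not directly amenable, though, so I would keep the dyadic van der Corput argument above, with the lower bound $|\psi^{(j)}|\ge c_0$ on a small neighbourhood as the crucial input.
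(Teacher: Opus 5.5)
Your proof is correct and is essentially the paper's argument: the cancellation bound $|e^{i\lambda\psi(t)}-e^{i\lambda\psi(-t)}|\le\lambda|\phi(t)|\lesssim\lambda t^k$ below the scale $t\approx\lambda^{-1/k}$, and van der Corput applied to each exponential separately above it, using that $\phi$ odd forces $k$ odd and hence $\psi^{(k)}(0)=\phi^{(k)}(0)/2\neq0$. The paper organizes this as a sum over rescaled dyadic shells, each bounded by $\min\{\Lambda_j,\Lambda_j^{-1/k}\}$, rather than your single cut at $r=\lambda^{-1/k}$ with the $BV$-weight corollary; also, your first bullet (a lower-order even derivative of $\psi_e$) is never needed, since $\psi^{(k)}(0)\neq0$ always holds.
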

\begin{proof}
	Assume $\phi\not\equiv 0$ and let $k$ be the smallest odd integer with $\phi^{(k)}(0)\neq 0$.
	Then Taylor's theorem gives
	\[
	\phi(t)=c\,t^k+O(t^{k+2}),\qquad c=\frac{\phi^{(k)}(0)}{k!}\neq 0.
	\]
	Hence there exists $\delta\in(0,1)$ and constants $c_0,C_0>0$ such that for $0<t\le\delta$,
	\begin{equation}\label{eq:finite-type}
		c_0 t^k\le |\phi(t)|\le C_0 t^k,
		\qquad
		|\phi^{(k)}(t)|\ge c_0.
	\end{equation}
	
	Split
	\[
	m(\lambda)=\int_0^\delta \frac{e^{i\lambda\psi(t)}-e^{i\lambda\psi(-t)}}{t}\,dt
	+\int_\delta^1 \frac{e^{i\lambda\psi(t)}-e^{i\lambda\psi(-t)}}{t}\,dt.
	\]
	The second integral is $O(1)$, so it remains to bound the first one, denoted by $m_0(\la)$, uniformly in $\lambda$.
	
	Let $I_j=[2^{-j-1}\delta,2^{-j}\delta]$, $t_j=2^{-j}\delta$ ($j\ge 0$). Dyadically decompose the integral and
	rescale $t=t_js$, $s\in[1/2,1]$. We obtain
	\[
	m_0(\lambda)=\sum_{j\ge 0} J_j(\lambda),
	\qquad
	J_j(\lambda):=\int_{1/2}^1 \frac{e^{i\lambda\psi(t_js)}-e^{i\lambda\psi(-t_js)}}{s}\,ds.
	\]
	Define the scale parameter $\Lambda_j:=\lambda\,t_j^k=\lambda\,\delta^k\,2^{-jk}.$
We claim that \begin{equation}\label{clm}
	|J_j|\ls \min \{\Lambda_j,\ \Lambda_j^{-1/k}\}.
\end{equation} We postpone the proof of this claim and use it to obtain a uniform bound for $m_0(\lambda)$.

 Let $j_*$ be such that $\Lambda_{j_*}\ge 1>\Lambda_{j_*+1}$, i.e.
$2^{j_*}\approx (\lambda\delta^k)^{1/k}$. Then 
\[
\sum_{j\le j_*}|J_j(\lambda)|
\lesssim \sum_{j\le j_*}\Lambda_j^{-1/k}
=\sum_{j\le j_*}(\lambda\delta^k)^{-1/k}2^{j}
\lesssim (\lambda\delta^k)^{-1/k}2^{j_*}
\lesssim 1,
\]
and
\[
\sum_{j> j_*}|J_j(\lambda)|
\lesssim \sum_{j>j_*}\Lambda_j
=\sum_{j>j_*}\lambda\delta^k\,2^{-jk}
\lesssim \lambda\delta^k\,2^{-k(j_*+1)}
\lesssim 1.
\]

Now it remains to prove the claim \eqref{clm}. On the one hand, using $|e^{ix}-e^{iy}|\le |x-y|$ and \eqref{eq:finite-type}, we obtain
	\[
	\big|e^{i\lambda\psi(t)}-e^{i\lambda\psi(-t)}\big|
	\le \la\,|\psi(t)-\psi(-t)|
	=\la\,|\phi(t)|
	\lesssim \la\,t^k \quad (0<t\le\delta).
	\]
 Hence for $s\in[1/2,1]$,
	\[
	\big|e^{i\lambda\psi(t_js)}-e^{i\lambda\psi(-t_js)}\big|
	\lesssim \la\,(t_js)^k\lesssim \la\,t_j^k=\Lambda_j.
	\]
	Therefore
	\begin{equation}\label{eq:small}
		|J_j(\lambda)|
		\ls \int_{1/2}^1 \frac{\Lambda_j}{s}\,ds
		\lesssim \Lambda_j.
	\end{equation}

	On the other hand, write $J_j=A_j-B_j$ where
	\[
	A_j(\lambda):=\int_{1/2}^1 \frac{e^{i\lambda\psi(t_js)}}{s}\,ds,
	\qquad
	B_j(\lambda):=\int_{1/2}^1 \frac{e^{i\lambda\psi(-t_js)}}{s}\,ds.
	\]
Let $\Phi_j(s)=\psi(t_js)$. 
	Since $k$ is odd and
\[
\phi^{(k)}(t)=\psi^{(k)}(t)+\psi^{(k)}(-t)=2\psi^{(k)}(0)+O(t),
\]
by \eqref{eq:finite-type} we have $|\psi^{(k)}(u)|\gtrsim 1$ for $|u|\le\delta$
	(up to adjusting $\delta$), hence $|\Phi_j^{(k)}(s)|\gtrsim t_j^k$ on $[1/2,1]$. Then a standard van der Corput estimate for phases with a nonvanishing $k$th derivative on a fixed interval yields
	\[
	|A_j(\lambda)|\lesssim (\lambda t_j^k)^{-1/k}=\Lambda_j^{-1/k}.
	\]
	Similarly $|B_j(\lambda)|\lesssim \Lambda_j^{-1/k}$, hence
	\begin{equation}\label{eq:large}
		|J_j(\lambda)|\le |A_j(\lambda)|+|B_j(\lambda)|\lesssim \Lambda_j^{-1/k}.
	\end{equation}
	This proves the claim \eqref{clm} and completes the proof.\end{proof}

	Next, we need the following one-dimensional Bang's lemma (Bang \cite[Equation (14)]{bang}, Bruna \cite[Lemma 2.5]{bruna}), which allows us to control the rate of increase of a function near a flat point. For completeness, we give a direct proof of the version we use.
\begin{lemma}[Bang]\label{lem:BB}
	Let $r>0$ and let $g\in C^\infty((-r,r))$ satisfy $g(x)=0\ (x\le 0).$
	Let $(A_n)_{n\ge 0}$ be a sequence of positive numbers such that
	\[
	\sup_{x\in (-r,r)}|g^{(n)}(x)|\le A_n\qquad (n\ge 0),
	\]
	and such that the quotient sequence
$\eta_n:=A_n/A_{n-1}\ (n\ge 1)$
	is nondecreasing. Fix $x\in(0,r)$ and an integer $\ell\ge 1$. If
	\[
	\sum_{j=\ell}^{\infty}\frac1{\eta_j}> 4x,
	\]
	then
	\[
	|g(x)|\le A_0 2^{-\ell}.
	\]
\end{lemma}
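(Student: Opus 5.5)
We iterate a one-step estimate that trades one derivative for a factor $1/2$. Since $g$ vanishes to infinite order at $0$ (smooth and identically zero on $(-r,0]$), every derivative satisfies $g^{(n)}(y)=0$ for $y\le 0$.

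The basic tool is Taylor's formula with integral remainder from $0$: for $0<y<r$, $h:=g^{(n)}$, and any $0<\delta\le y$, we can write $h(y)$ in terms of $h'$ on a short interval. The cleanest way to use it is a two-point mean value bound: if $|h|\le M_0$ and $|h'|\le M_1$ on $(0,y]$ and $h(0)=0$, then $|h(y)|\le M_1 y$. This alone is too weak, since it never uses $A_n$ in a halving way. So instead we set up a recursion on shrinking intervals. Choose points $0<x_\ell<x_{\ell+1}<\dots$ accumulating at a limit $\le x$, or rather go downward: set $y_0=x$, and $y_{m}=y_{m-1}-\delta_m$ with $\delta_m:=c/\eta_{\ell+m-1}$ for a suitable constant $c$. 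Since $\sum_{j\ge\ell}1/\eta_j>4x$, taking $c=1/4$ gives $\sum_m \delta_m> x$, so after finitely many steps the point $y_M$ lies in $(-\infty,0]$, where all derivatives vanish.

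The key local inequality is: for $h=g^{(n)}$ with $|h''|\le A_{n+2}$ on an interval $[a,a+\delta]$,
\[
|h(a+\delta)|\le |h(a)|+\delta|h'(a)|+\tfrac12\delta^2A_{n+2},
\]
and similarly for $h'$. The aim is to prove by downward induction along the points $y_m$ a bound of the form
\[
\sup_{(-r,y_m]}|g^{(j)}|\le 2^{-(M-m)}\,A_j\quad\text{for }j\ge\ell+m-\text{(shift)},
\]
more precisely: on $(-r,y_{M-p}]$ one has $|g^{(n)}|\le 2^{-p}A_n$ for all $n$ with $n\ge \ell+M-p$ (indices arranged so $\eta$ used at each step is the relevant one). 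At $p=0$ this is trivial since $g\equiv0$ there. For the inductive step, take $y\in(y_{M-p},y_{M-p+1}]$ and $n$ in the range. Apply the mean value theorem from the point $y_{M-p}$:
\[
|g^{(n)}(y)|\le |g^{(n)}(y_{M-p})|+\delta\,\sup|g^{(n+1)}|\le 2^{-p}A_n+\delta A_{n+1}.
\]
Here we want $2^{-(p-1)}A_n$. Using $\delta A_{n+1}=\delta\eta_{n+1}A_n\le \tfrac14\,\eta_{n+1}/\eta_{\ell+M-p}\,A_n$ and monotonicity of $\eta$ this goes the wrong way for large $n$. So the induction must be arranged with decreasing order of derivative as we move right: on the $p$-th interval control only $g^{(n)}$ with $n$ exactly one index lower each step, ending at $g^{(0)}=g$ at $x$. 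Concretely, claim $|g^{(q)}|\le 2^{-(\ell+q')}\cdots$ — the bookkeeping to get the factor $2^{-\ell}$ from $\sum_{j\ge\ell}$ is the main obstacle.

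The cleanest resolution I would try is the classical Bang argument. Define $\delta_j=1/(4\eta_j)$ for $j\ge\ell$ and use the Taylor bound $|g^{(j-1)}(y)|\le \sup|g^{(j)}|\cdot\operatorname{dist}(y,\text{zero region})$ to get geometric decay. Equivalently, prove directly that $|g^{(j)}(y)|\le 2^{-j}A_j\cdot 2^{j}\dots$ on $(-r,\sum_{i>j}\delta_i\cdot\text{const}]$. A workable form of the lemma is the following. Let $z_j:=4\sum_{i\ge j}^{N}\delta_i$ truncated at large $N$, where $|g^{(N)}|\le A_N$ is used crudely. Then show by induction from $j=N$ down to $j=\ell$ that on $(-r, z_j']$,
\[
|g^{(j-1)}(y)|\le \max\bigl(y-z_{j+1}',0\bigr)A_j+\dots\le \tfrac12 A_{j-1}\cdot(\text{factor}),
\]
using $\delta_jA_j=A_{j-1}/4$. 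Each step from order $j$ to $j-1$ gains $\tfrac12$ relative to $A_{j-1}$ once the previous stage already gives the halved bound. The final $\ell$ steps below index $\ell$ lose nothing beyond $A_0$ via the trivial bound, giving $|g(x)|\le 2^{-\ell}A_0$. The delicate point, which I expect to consume most of the work, is handling the infinite tail: the truncation at $N$ must have a negligible error. This is exactly where the strict inequality $\sum 1/\eta_j>4x$ is used.
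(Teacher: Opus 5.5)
Your proposal has a genuine gap: it never sets up an induction that closes. The skeleton is right. You use a chain of points from $x$ down to the zero region with spacings proportional to $1/\eta_j$ (the paper takes $t_\ell=x>t_{\ell+1}>\dots>t_{n+1}=0$ with $t_j-t_{j+1}=a/\eta_j$, $a\le 1/4$), together with the fundamental theorem of calculus and $A_j/\eta_j=A_{j-1}$.

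But a scheme that controls one derivative per interval, as in your ``workable form'', fails. The step $|g^{(j-1)}(y)|\le |g^{(j-1)}(\text{left endpoint})|+\delta_j\sup|g^{(j)}|$ needs $g^{(j-1)}$ at the left endpoint. The previous stage controls only $g^{(j)}$ there, so it does not supply this value; your ``$+\dots$'' hides exactly this term. Bounding $\sup|g^{(j)}|$ by $A_j$ gives only the fixed gain $\delta_jA_j=A_{j-1}/4$, which does not compound into $2^{-\ell}$. Moreover, your chain from $j=N$ down to $j=\ell$ ends with a bound on $g^{(\ell-1)}$. Nothing in it gets from there to $g$ itself; the claim that ``the final $\ell$ steps below index $\ell$ lose nothing beyond $A_0$'' has no mechanism behind it. Your first attempt failed for the reason you noticed: you tried to control high-order derivatives $n\ge \ell+M-p$, and for these $\eta_{n+1}$ is not dominated by the $\eta$ that sets the step length.

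The missing idea is a two-index induction that controls \emph{all} derivatives of order $p\le q$ on the $q$-th interval $I_q=[t_{q+1},t_q]$. Set $F(p,q):=\sup_{I_q}|g^{(p)}|$ and prove $F(p,q)\le (2a)^{q-p}A_p$ for $\ell\le q\le n+1$, $0\le p\le q$.
\begin{itemize}
\item The bound is trivial on the diagonal $p=q$ and for $q=n+1$, where $I_{n+1}=(-r,0]$.
\item The fundamental theorem of calculus gives $F(p,q)\le F(p,q+1)+\frac{a}{\eta_q}F(p+1,q)$. The endpoint value comes from the same order on the interval to the left, and the slope from the next order on the same interval.
\item Monotonicity enters as $A_{p+1}=\eta_{p+1}A_p\le\eta_qA_p$, which holds precisely because $p+1\le q$. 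This yields $F(p,q)\le(2a)^{q-p}A_p(2a+\tfrac12)\le(2a)^{q-p}A_p$.
\end{itemize}
Induct downward in $q$, and for each $q$ downward in $p$. The case $(p,q)=(0,\ell)$ then gives $|g(x)|\le(2a)^\ell A_0\le 2^{-\ell}A_0$. So the factor $2^{-\ell}$ comes from $g=g^{(0)}$ sitting $\ell$ orders below the diagonal on the interval containing $x$.

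Finally, the truncation problem you expect to be the hard part does not arise. The strict inequality is used only to choose a finite $n$ with $\sum_{j=\ell}^n\eta_j^{-1}\ge 4x$. Since the chain ends exactly at $0$, the boundary condition $F(p,n+1)=0$ holds exactly, with no tail error.
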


\begin{proof}The idea is to connect $x$ to $0$ by a chain of short intervals whose lengths are
	comparable to $\eta_j^{-1}$ and then to iterate the fundamental theorem of calculus from high derivatives down to lower ones. The monotonicity of ($\eta_j$) is exactly what allows each step to close.

	Choose $n\ge \ell$ so that
	\[
	\sum_{j=\ell}^n \frac1{\eta_j}\ge 4x.
	\]
	Set
	\[
	a:=\frac{x}{\sum_{j=\ell}^n \eta_j^{-1}}\le \frac14.
	\]
	Choose points
	\[
	t_\ell=x>t_{\ell+1}>\cdots > t_{n+1}=0
	\]
	so that
	\[
	t_j-t_{j+1}=\frac{a}{\eta_j}\qquad (\ell\le j\le n).
	\]
	For $\ell\le j\le n$ set $I_j=[t_{j+1},t_j]$ and let $I_{n+1}=(-r,0]$. For integers
	$0\le p\le q\le n+1$, define
	\[
	F(p,q):=\sup_{x\in I_q}|g^{(p)}(x)|.
	\]
	We claim that
	\begin{equation}\label{eq:claim}
		F(p,q)\le (2a)^{q-p}A_p
		\qquad (\ell\le q\le n+1,\; 0\le p\le q).
	\end{equation}
	This is clear when $p=q$, and also when $q=n+1$, because $g$ and all its derivatives vanish on
	$I_{n+1}=(-r,0]$.
	
	Now let $\ell\le q\le n$ and $0\le p<q$. For every $y\in I_q$, the fundamental theorem of calculus gives
	\[
	|g^{(p)}(y)|
	\le |g^{(p)}(t_{q+1})| + (t_q-t_{q+1})\sup_{x\in I_q}|g^{(p+1)}(x)|.
	\]
	Hence
	\[
	F(p,q)\le F(p,q+1)+\frac{a}{\eta_q}F(p+1,q).
	\]
	Assuming \eqref{eq:claim} already known for $(p,q+1)$ and $(p+1,q)$, we get
	\[
	F(p,q)
	\le (2a)^{q+1-p}A_p+\frac{a}{\eta_q}(2a)^{q-p-1}A_{p+1}.
	\]
	Because $(\eta_j)$ is nondecreasing and $p+1\le q$, we have $A_{p+1}=A_p\eta_{p+1}\le A_p\eta_q$, so
	\[
	F(p,q)
	\le (2a)^{q-p}A_p\Bigl(2a+\frac12\Bigr)
	\le (2a)^{q-p}A_p,
	\]
	since $a\le 1/4$. This proves \eqref{eq:claim}. Finally, since $x=t_\ell\in I_\ell$, \eqref{eq:claim} with
	$(p,q)=(0,\ell)$ gives
	\[|g(x)|\le F(0,\ell)\le A_0(2a)^\ell\le A_0 2^{-\ell}.\]
\end{proof}

	\subsection{Proof of Proposition \ref{prop0}}
	Suppose that $\psi\in C^\alpha([-1,1])$ for some integer $\alpha\ge1$. Using $|e^{ix}-e^{iy}|\le \min\{2,|x-y|\}$, \eqref{eq:pv}, and $|\phi(t)|\ls |t|$, we obtain
	\[
	|m(\lambda)|
	\ls
	\int_0^1 \frac{\min\{1,\la t\}}{t}\,dt\ls \log\la.
	\]
We remark that one may obtain a more precise estimate, namely $|m(\la)|\le \frac{2}{\alpha}\log\la+O(1)$, by using a refined argument similar to the proof of Theorem \ref{thm:main0}.

Next, we choose $\psi(t)=\max(t^{\alpha+1},0)\in C^{\alpha}([-1,1])$ and then
\begin{align*}
	m(\lambda)
	&=
	\int_0^1 \frac{e^{i\lambda t^{\alpha+1}}-1}{t}\,dt=\frac1{\alpha+1}\int_0^\la\frac{e^{iu}-1}{u}\,du\\
	&=
	-\frac1{\alpha+1}\log\lambda + O(1).
\end{align*}
This proves the sharpness of the upper bound. For large $\alpha$, this also shows that the coefficient $\approx\frac1\alpha$ is essentially sharp.
	\section{General smooth functions}\label{sec3}
	We prove Theorem \ref{thm:main0} in this section. Let $\psi\in C^\infty([-1,1])$ and $\phi(t)=\psi(t)-\psi(-t)$. By Lemma \ref{finitetype}, it remains to handle the case in which $\phi$ is flat at $0$.
	Then for every $N\ge1$ there exist constants $C_N,\delta_N>0$ such that
	\[
	|\phi(t)|\le C_N |t|^N
	\qquad (|t|\le\delta_N).
	\]
	Hence
	\[
	|e^{i\lambda\psi(t)}-e^{i\lambda\psi(-t)}|
	\le \lambda C_N |t|^N.
	\]
	
	Split \eqref{eq:pv} at
	\[
	r:=\min\bigl(\delta_N,(\lambda C_N)^{-1/N}\bigr).
	\]
	Then
	\[
	|m(\lambda)|
	\le \int_0^r \lambda C_N t^{N-1}\,dt + 2\int_r^1 \frac{dt}{t}
	\le \frac1N + \frac{2}{N}\log\lambda + O(1).
	\]
	Dividing by $\log\lambda$ and then letting $\lambda\to\infty$, followed by $N\to\infty$, yields $m(\la)=o(\log\la)$ as claimed.

\subsection{Sharpness}
We construct examples to show that the sub-logarithmic growth $o(\log\la)$ is sharp. The construction uses a ``dyadic plateau'' phase for which $m(\lambda)$
grows linearly in an index $n$ along a chosen frequency subsequence
$\lambda_n$, and then we tune the subsequence so that
$n\approx \log\lambda_n/\logit{k}\lambda_n$.

Fix $\eta\in C_c^\infty((1/2,1))$ with $0\le \eta\le 1$ and
\[
\eta\equiv 1\ \text{ on } [3/5,4/5].
\]
For $j\ge 1$ set
\[
\eta_j(t):=\eta(2^j t),
\qquad
J_j:=\Big[\frac{3}{5\cdot 2^j},\frac{4}{5\cdot 2^j}\Big].
\]
Then $\eta_j\equiv 1$ on $J_j$ and $\operatorname{supp}\eta_j\subset(2^{-(j+1)},2^{-j})$,
so the supports are pairwise disjoint.

Let $\{q_j\}_{j\ge 1}$ be an increasing sequence of odd integers with $q_j\ge 3$.
Define
\[
Q_j:=\prod_{\ell=1}^j q_\ell,
\qquad
a_j:=\frac{\pi}{Q_j}.
\]
Define the phase
\begin{equation}\label{eq:psi-def}
	\psi(t):=
	\begin{cases}
		0, & t\le 0,\\[4pt]
		\displaystyle \sum_{j=1}^\infty a_j\,\eta_j(t), & t>0.
	\end{cases}
\end{equation}
Because the supports of $\eta_j$ are disjoint, the sum is locally finite on $(0,1)$.

Now we choose $q_j$ to force a given iterated-log profile. For $k\ge 2$, fix a large integer $j_0$ so that $\logit{k-2}(j+j_0)\ge 2$ for all $j\ge 1$, and define
	\begin{equation}\label{eq:qj-choice}
		q_j:=2\Big\lfloor \logit{k-2}(j+j_0)\Big\rfloor+1.
	\end{equation}
	Then $q_j$ is odd, $q_j\ge 5$, and
$\log q_j \approx \logit{k-1} j$ and $\log Q_n \approx
	n\,\logit{k-1} n$. 

\begin{lemma}\label{lem:smooth}
	With the choice of $q_j$ above,
	the function $\psi$ defined by \eqref{eq:psi-def} lies in $C^\infty(\mathbb R)$.
\end{lemma}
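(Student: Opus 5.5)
Since the supports of the $\eta_j$ are pairwise disjoint and contained in $(2^{-(j+1)},2^{-j})$, $\psi$ is clearly $C^\infty$ on $\mathbb R\setminus\{0\}$. On $(0,1)$ the sum is locally finite, and $\psi$ vanishes identically for $t\le0$ and for $t\ge 1/2$, since $\eta_1$ is supported in $(1/4,1/2)$. So the only issue is smoothness at $t=0$. It suffices to show that for every $n\ge0$,
\[
\sup_{2^{-(j+1)}<t<2^{-j}}|\psi^{(n)}(t)| = a_j\,2^{jn}\sup|\eta^{(n)}| \longrightarrow 0 \qquad (j\to\infty),
\]
and more quantitatively that $|\psi^{(n)}(t)|\le C_{n,N}t^N$ for all $N$. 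A standard induction on $n$ then shows that $\psi^{(n)}(0)$ exists and equals $0$ from both sides, with $\psi^{(n)}$ continuous at $0$. Concretely, the one-sided difference quotient $\psi^{(n)}(t)/t\to0$ as $t\to0^+$ because $\psi^{(n)}(t)=O(t^2)$. From the left everything vanishes.

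The key estimate is superexponential decay of $a_j=\pi/Q_j$. I will use $\log Q_j=\sum_{\ell\le j}\log q_\ell$. Since $q_\ell\approx \logit{k-2}(\ell+j_0)\to\infty$, we get $\log Q_j/j\to\infty$. For $k=2$ we have $q_\ell\approx \ell$, and for larger $k$ the sequence $q_\ell$ is iterated-logarithmic, which still tends to infinity because $\logit{k-2}$ is unbounded. Hence for every $M$ there is $j_M$ with $Q_j\ge 2^{Mj}$ for $j\ge j_M$. Then for $t\in\operatorname{supp}\eta_j$, so that $t>2^{-(j+1)}$, we obtain
\[
|\psi^{(n)}(t)|\le \pi\,\|\eta^{(n)}\|_\infty\,2^{jn}2^{-Mj}\le C_n\,2^{-(M-n)j}\le C_n\,2^{M-n}\, t^{M-n},
\]
for $j\ge j_M$, and the finitely many remaining $j$ do not affect behavior near $0$. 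Choosing $M=n+N$ gives $|\psi^{(n)}(t)|\le C_{n,N}t^N$ on $(0,1)$. Therefore all derivatives extend continuously by $0$ at the origin, and $\psi\in C^\infty(\mathbb R)$, in fact flat at $0$.

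The only mildly delicate step is verifying $\log Q_j/j\to\infty$ from the choice \eqref{eq:qj-choice}. This follows because $q_\ell\to\infty$, so the Cesàro averages of $\log q_\ell$ also tend to $\infty$. No finer asymptotics of $Q_n$ are needed for smoothness alone; they matter only for the growth estimates later.
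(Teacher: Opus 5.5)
Your proof is correct and follows the paper's argument. On each shell you write $\psi^{(n)}=a_j2^{jn}\eta^{(n)}(2^j\cdot)$, use the superexponential decay of $a_j=\pi/Q_j$ to beat $2^{jn}$, conclude that every derivative tends to $0$ as $t\to0^+$, and match this with the zero function on the left. The differences are only cosmetic: you get the decay from the weaker fact $q_\ell\to\infty$ (via Ces\`aro averages) rather than from $\log Q_j\approx j\logit{k-1}j$, and you spell out the difference-quotient induction that the paper compresses into ``all derivatives match at $0$''.
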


\begin{proof}
	Fix an integer $m\ge 0$.
	On $\operatorname{supp}\eta_j$ we have $\psi(t)=a_j\eta_j(t)$ and
	\[
	\psi^{(m)}(t)=a_j\,2^{jm}\,\eta^{(m)}(2^j t).
	\]
Since $\log Q_j\approx j\log^{(k-1)}j$, we obtain
	\[
	\sup_{t\in(0,2^{-\ell})}|\psi^{(m)}(t)|
	\lesssim \sup_{j\ge \ell}\frac{2^{jm}}{Q_j}\to 0\quad (\ell \to\infty).
	\]
	Therefore $\psi^{(m)}(t)\to 0$ as $t\to 0^+$ for every $m$.
	Since $\psi\equiv 0$ on $(-\infty,0]$, all derivatives match at $0$ and
	$\psi\in C^\infty(\mathbb R)$.
\end{proof}

Let $\lambda_n=Q_n$. Since $\log \lambda_n\approx n\log^{(k-1)}n$, we have $n\approx \log \lambda_n/\log^{(k)}\lambda_n$ for all large $n$.

\begin{lemma}[Lower bound]\label{lem:lower}
	There exists $c_0>0$ such that for all $n\ge 1$,
	\[
	|m(\lambda_n)|\ge c_0\,n.
	\]
\end{lemma}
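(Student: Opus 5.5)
Since $\psi\equiv 0$ on $(-\infty,0]$, formula \eqref{eq:pv} gives
\[
m(\lambda)=\int_0^1\frac{e^{i\lambda\psi(t)}-1}{t}\,dt .
\]
This integral converges absolutely. Near $0$ we have $|e^{i\lambda\psi(t)}-1|\le \lambda\psi(t)$, and on $\operatorname{supp}\eta_j$ we have $\psi\le a_j$. Each dyadic shell contributes at most $\lambda a_j\log 2$, and $\sum_j a_j<\infty$. So we may freely take real parts and split the integral over the dyadic pieces.

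The plan is to bound $|m(\lambda_n)|$ from below by $-\Re m(\lambda_n)$ and to exploit a sign property. The integrand of the real part is
\[
\frac{\cos(\lambda\psi(t))-1}{t}\le 0\qquad\text{for every } t\in(0,1).
\]
Hence
\[
|m(\lambda_n)|\ge -\Re m(\lambda_n)=\int_0^1\frac{1-\cos(\lambda_n\psi(t))}{t}\,dt\ge \sum_{j=1}^n\int_{J_j}\frac{1-\cos(\lambda_n\psi(t))}{t}\,dt .
\]
Dropping the rest of the interval is legitimate precisely because every piece of the integral is nonnegative. This positivity is what removes any need to control cancellation, which would otherwise be the main obstacle.

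Next I evaluate the phase on the plateaus $J_j$ with $j\le n$. There $\eta_j\equiv1$, and the other $\eta_i$ vanish by disjointness of supports. Hence
\[
\lambda_n\psi(t)=Q_n a_j=\pi\,\frac{Q_n}{Q_j}=\pi\prod_{\ell=j+1}^n q_\ell,
\]
where the empty product is $1$ when $j=n$. Since every $q_\ell$ is odd, this is an odd multiple of $\pi$. Therefore $\cos(\lambda_n\psi)=-1$ on $J_j$, and the integrand equals $2/t$ there.

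Finally, each $J_j$ contributes
\[
\int_{J_j}\frac{2\,dt}{t}=2\log\frac{4/5}{3/5}=2\log\frac43 .
\]
Each $J_j\subset(0,1)$ for $j\ge1$, and the intervals are disjoint. Summing over $1\le j\le n$ yields
\[
|m(\lambda_n)|\ge 2\log\tfrac43\cdot n,
\]
so the lemma holds with $c_0=2\log(4/3)$.
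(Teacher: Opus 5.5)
Your proof is correct and follows essentially the same route as the paper. You use the sign of $1-\cos$ to pass to $-\Re m(\lambda_n)$, show that $\lambda_n\psi=\pi\prod_{\ell=j+1}^n q_\ell$ is an odd multiple of $\pi$ on each plateau $J_j$, and sum $2\log(4/3)$ over $1\le j\le n$; your remark on absolute convergence is a harmless addition the paper leaves implicit.
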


\begin{proof}
Since $\psi(-t)=0$ for $t>0$ by construction, we have
	\[
	-\Re m(\lambda)=\int_0^1 \frac{1-\cos(\lambda\psi(t))}{t}\,dt\ge 0.
	\]
	Fix $n$ and take $t\in J_j$ with $1\le j\le n$. Then $\eta_j(t)=1$ and
	$\eta_\ell(t)=0$ for $\ell\neq j$, so $\psi(t)=a_j=\pi/Q_j$.
	Therefore
	\[
	\lambda_n\psi(t)=Q_n\cdot\frac{\pi}{Q_j}
	=\pi\prod_{\ell=j+1}^n q_\ell.
	\]
	Since each $q_\ell$ is odd, we have
	$\cos(\lambda_n\psi(t))=-1$ on $J_j$.
	Thus
	\[
	-\Re m(\lambda_n)
	\ge \sum_{j=1}^n \int_{J_j}\frac{2}{t}\,dt
	=2\sum_{j=1}^n \log\!\left(\frac{ \frac{4}{5\cdot 2^j} }{ \frac{3}{5\cdot 2^j} }\right)
	=2n\log\!\left(\frac43\right).
	\]
	So $|m(\lambda_n)|\ge -\Re m(\lambda_n)\ge c_0 n$ with $c_0:=2\log(4/3)$.
\end{proof}

\begin{lemma}[Upper bound]\label{lem:upper}
	There exists $C_0>0$ such that for all $n\ge 1$,
	\[
	|m(\lambda_n)|\le C_0\,n.
	\]
\end{lemma}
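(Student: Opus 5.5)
The bound follows by direct estimation: the phase vanishes on the negative axis and the supports of the plateaus $\eta_j$ are disjoint. Since $\psi(-t)=0$ for $t>0$, formula \eqref{eq:pv} gives
\[
m(\lambda)=\int_0^1\frac{e^{i\lambda\psi(t)}-1}{t}\,dt .
\]
The integrand vanishes wherever $\psi(t)=0$. This includes all $t\in(0,1]$ outside $\bigcup_{j\ge1}\operatorname{supp}\eta_j$, and in particular $t\ge 1/2$. Because $\operatorname{supp}\eta_j\subset(2^{-(j+1)},2^{-j})$ and these supports are disjoint, I will write
\[
m(\lambda_n)=\sum_{j\ge1}\int_{2^{-(j+1)}}^{2^{-j}}\frac{e^{i\lambda_n a_j\eta_j(t)}-1}{t}\,dt=:\sum_{j\ge1}M_j ,
\]
and then treat the ranges $j\le n$ and $j>n$ separately.

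For $1\le j\le n$ I use the trivial bound $|e^{ix}-1|\le2$. This gives $|M_j|\le 2\int_{2^{-(j+1)}}^{2^{-j}}dt/t=2\log2$, so these indices contribute at most $2n\log 2$. This is the only term that grows with $n$, and it has the claimed linear size.

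For $j>n$ I use $|e^{ix}-1|\le|x|$ together with $0\le\eta_j\le1$. This gives
\[
|M_j|\le \lambda_n a_j\log 2=\pi\log2\,\frac{Q_n}{Q_j}=\pi\log 2\prod_{\ell=n+1}^{j}q_\ell^{-1}\le \pi\log2\cdot 3^{-(j-n)},
\]
since every $q_\ell\ge3$. Summing over $j>n$ gives a geometric series bounded by an absolute constant, namely $\tfrac{\pi}{2}\log2$.

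Combining the two ranges gives $|m(\lambda_n)|\le 2n\log2+\tfrac{\pi}{2}\log 2\le C_0 n$ for all $n\ge1$. I do not expect a genuine obstacle. The only point to check is that the tail $j>n$ is uniformly summable, and this follows from the product structure $\lambda_n a_j=\pi Q_n/Q_j$ with $q_\ell\ge3$. The specific iterated-logarithm choice \eqref{eq:qj-choice} is not needed here.
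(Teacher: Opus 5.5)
Your proposal is correct and follows the paper's argument: you split into the dyadic shells $(2^{-(j+1)},2^{-j})$, bound the shells with $j\le n$ by $2\log 2$ each, and control the tail $j>n$ with $|e^{ix}-1|\le|x|$ and $Q_n/Q_j\le 3^{-(j-n)}$. You also state explicitly that the shell $(1/2,1)$ contributes nothing and give an explicit constant; the paper leaves both implicit.
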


\begin{proof}
	Write $m(\lambda_n)=\int_0^1 \frac{e^{i\lambda_n\psi(t)}-1}{t}\,dt$.
	Split $(0,1)$ into dyadic shells $I_j:=(2^{-(j+1)},2^{-j})$.
	On each $I_j$ we have $|e^{i\lambda_n\psi(t)}-1|\le 2$ and
	\[
	\int_{I_j}\frac{dt}{t}=\log 2,
	\]
	so
	\[
	\int_{I_j}\frac{|e^{i\lambda_n\psi(t)}-1|}{t}\,dt \le 2\log 2.
	\]
	Summing this over $j=1,\dots,n$ gives a bound $\lesssim n$.
	
	For the tail $j>n$, we use $|e^{ix}-1|\le |x|$ and the fact that on $I_j$
	the phase equals $a_j\eta_j$ (or $0$), hence $|\psi(t)|\le |a_j|=\pi/Q_j$.
	Thus for $j>n$,
	\[
	\int_{I_j}\frac{|e^{i\lambda_n\psi(t)}-1|}{t}\,dt
	\le \int_{I_j}\frac{|\lambda_n|\,|\psi(t)|}{t}\,dt
	\le \frac{\pi Q_n}{Q_j}\int_{I_j}\frac{dt}{t}
	= \pi\log 2\cdot \frac{Q_n}{Q_j}.
	\]
	Since $q_\ell\ge 3$, we have $Q_j\ge Q_n\,3^{j-n}$ for $j>n$, hence
	$\sum_{j>n}Q_n/Q_j\le \sum_{r\ge 1}3^{-r}\ls 1$.
	Therefore the tail contributes $O(1)$ uniformly in $n$.
	Combining, $|m(\lambda_n)|\le C_0 n$.
\end{proof}

Thus, by Lemmas~\ref{lem:lower} and \ref{lem:upper}, we obtain
\[
|m(\lambda_n)|\approx n
\approx \frac{\log \lambda_n}{\logit{k}\lambda_n}.
\]

\section{Denjoy--Carleman class}\label{sec4}
In this section, we prove general upper bounds for $m(\la)$ when the phase belongs to a Denjoy--Carleman class associated with a log-convex sequence.

Let $M=(M_n)_{n\ge 0}$ be a positive sequence with $M_0=1$. We say that $M$ is \emph{log-convex} if
\[
M_n^2\le M_{n-1}M_{n+1}\qquad (n\ge 1).
\]
For such a sequence, define the quotient sequence
\[
\mu_n:=\frac{M_n}{M_{n-1}}\qquad (n\ge 1).
\]
Log-convexity is equivalent to the monotonicity of $(\mu_n)_{n\ge 1}$.

For a log-convex sequence $M$, a function $\psi\in C^\infty([-1,1])$ belongs to the Denjoy--Carleman class
$\CM([-1,1])$ if there exists a constant $K>0$ such that for all $n\ge0$
\begin{equation}\label{defdc}
	\sup_{t\in[-1,1]}|\psi^{(n)}(t)|\le K^{n+1}M_n.
\end{equation}
The Gevrey class $G^s$ and its refinements $G_k^s$ are important special cases of the Denjoy--Carleman class. We first establish a general theorem for the Denjoy-Carleman class in this section, and then discuss concrete estimates and sharpness examples for the Gevrey class $G^s$ and its refinements $G_k^s$ in the next two sections.

The central quantity below is the tail function
\begin{equation}\label{eq:TM}
	T_M(N):=\sum_{j\ge N}\frac1{\mu_j},
\end{equation}
with the convention $T_M(N)=\infty$ if the series diverges. By the Denjoy--Carleman theorem (see H\"ormander \cite[Theorem 1.3.8]{hor1}), the class is quasianalytic exactly when $T_M(1)=\infty$. Specifically, for the Gevrey class $G^s$ with $s>1$, we have $T_M(N)\approx N^{1-s}$, and for the refined Gevrey class $G_k^s$ with $s>1$, we have $T_M(N)\approx (\log^{(k)}N)^{1-s}$.

\begin{theorem}\label{thm:main3}
	Let $M=(M_n)_{n\ge 0}$ be a log-convex sequence, and let $\mu_n=M_n/M_{n-1}$ and $T_M$ be as in \eqref{eq:TM}.
	
	\smallskip
	\noindent
	\textup{(i)} If $T_M(1)=\infty$, then $m(\lambda)=O(1).$
	
	\smallskip
	\noindent
	\textup{(ii)} Assume now that $T_M(1)<\infty$ and $\mu_NT_M(N)\ge C_0$ for some $C_0>1$ and all large $N$.
	Then there exist constants $C,c>0$ such that
	\begin{equation}\label{ub3}
		|m(\lambda)|\le C\Bigl(1+\log \frac1{T_M(c\log \lambda)}\Bigr)
		\qquad (\lambda\ge 2).
	\end{equation}
\end{theorem}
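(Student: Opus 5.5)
Throughout, $\phi(t)=\psi(t)-\psi(-t)$ as in \eqref{phidef}, and the plan has three steps. By Lemma \ref{finitetype} it suffices to treat the case where $\phi$ vanishes to infinite order at $0$. Note that $\phi\in\CM$ with the same bounds as $\psi$ up to a factor $2$. Apply Lemma \ref{lem:BB} to $g=\phi\cdot\mathbf 1_{t>0}$ on $(-1,1)$ with $A_n=2K^{n+1}M_n$. Flatness of $\phi$ at $0$ makes $g$ smooth, and the quotients $\eta_n=K\mu_n$ are nondecreasing by log-convexity.

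\textbf{Part (i).} If $T_M(1)=\infty$, then for every $\ell$ and every $x\in(0,1)$ we have $\sum_{j\ge\ell}\eta_j^{-1}=\infty>4x$. Lemma \ref{lem:BB} then gives $|\phi(x)|\le A_02^{-\ell}$ for all $\ell$, so $\phi\equiv0$ on $(0,1)$ and $m(\lambda)=0$. This is the Denjoy--Carleman quasianalyticity argument.

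\textbf{Part (ii), flat-point bound.} Lemma \ref{lem:BB} yields
\[
|\phi(x)|\le 2K\,2^{-\ell}\quad\text{whenever } T_M(\ell)>4Kx.
\]
Given $x$, choose $\ell(x)$ to be the largest $\ell$ with $T_M(\ell)>4Kx$. Since $T_M$ decreases to $0$, $\ell(x)\to\infty$ as $x\to0$.

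\textbf{Part (ii), estimating $m(\lambda)$.} Write $m(\lambda)=\int_0^1 (e^{i\lambda\psi(t)}-e^{i\lambda\psi(-t)})\,dt/t$ and bound the integrand by $\min\{2,\lambda|\phi(t)|\}$. Set $L=c\log\lambda$ with $c$ chosen so that $2^{-L}\lambda\le\lambda^{-1}$, and set $x_0:=T_M(L)/(4K)$. The integral splits into three ranges.
\begin{itemize}
\item On $[x_0,1]$, bound the integrand by $2$; the contribution is $\le 2\log(4K/T_M(L))$, which is the main term in \eqref{ub3}.
\item On $(0,x_0)$, first take the subrange of $t$ with $T_M(\ell)>4Kt$ for some $\ell\ge L$. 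Use dyadic shells in the level $\ell$: the set of $t$ with $\ell(t)=\ell$ is $\{t: T_M(\ell+1)/(4K)\le t<T_M(\ell)/(4K)\}$, whose $dt/t$-measure is $\log(T_M(\ell)/T_M(\ell+1))$.
\item On that set, $\lambda|\phi|\le 2K\lambda2^{-\ell}$. Summing over $\ell\ge L$ requires
\[
\sum_{\ell\ge L}\lambda2^{-\ell}\log\frac{T_M(\ell)}{T_M(\ell+1)}\lesssim1 .
\]
\end{itemize}

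\textbf{Main obstacle.} The real difficulty is bounding $\log\frac{T_M(\ell)}{T_M(\ell+1)}$. Here the hypothesis $\mu_NT_M(N)\ge C_0>1$ enters:
\[
T_M(\ell+1)=T_M(\ell)-\frac1{\mu_\ell}\ge T_M(\ell)\Bigl(1-\frac1{C_0}\Bigr),
\]
so each ratio is at most $\log\frac{C_0}{C_0-1}$, a constant. The sum then collapses to $\lesssim\lambda2^{-L}\le1$ once $c$ is chosen so that $2^{-c\log\lambda}\le\lambda^{-2}$. One must also check that the $x$-ranges indexed by $\ell\ge L$ exhaust $(0,x_0)$; this holds because $T_M(\ell)\downarrow0$. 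Combining the three ranges gives $|m(\lambda)|\le C(1+\log(1/T_M(c\log\lambda)))$, with constants depending on $K$ and $C_0$.
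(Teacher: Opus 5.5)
Your proposal is correct and follows the paper's proof essentially step for step. Lemma \ref{lem:BB} with $A_n=2K^{n+1}M_n$ gives $|\phi|\lesssim 2^{-\ell}$ on the shells $[T_M(\ell+1)/(4K),T_M(\ell)/(4K))$, which are the paper's $I_N$. The region below level $c\log\lambda$ yields the main term $\log(1/T_M(c\log\lambda))$; you bound it directly by $2\log(1/x_0)$, while the paper telescopes. The hypothesis $\mu_N T_M(N)\ge C_0$ bounds $\log(T_M(\ell)/T_M(\ell+1))$, so the remaining tail is $O(1)$, and part (i) is the same quasianalyticity argument.
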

The condition ``$\mu_NT_M(N)\ge C_0$'' roughly means that the series $\sum_{j\ge N}1/\mu_j$ cannot converge faster than a geometric series. It can be relaxed, but it is enough for our purposes, since it holds for the Gevrey class $G^s$ $(s>1)$ and the refined Gevrey class $G_k^s$ $(s>1)$.

To prove the theorem, we need to control the rate of growth of $\phi$ near 0. We achieve this by using Bang's Lemma \ref{lem:BB}.
\begin{proposition}[Flat-point estimate]\label{prop:flat}
	Let $M=(M_n)_{n\ge 0}$ be a log-convex sequence. Let
	\[
	N_M(r):=\sup\bigl\{N\ge 1:\ T_M(N)\ge r\bigr\}\in \N\cup\{0,\infty\}.
	\]
	Let $\psi\in \CM([-1,1])$ and
	\[
	\phi(t):=\psi(t)-\psi(-t).
	\]
	Assume that $\phi$ is flat at $0$. Then there exist constants $C,c>0$ such that for all sufficiently small $|t|$
	\begin{equation}\label{eq:flat-bound}
		|\phi(t)|\le C\,2^{-N_M(c|t|)}.
	\end{equation}
	If in addition $T_M(1)=\infty$, then $\phi\equiv 0$ on $[-1,1]$.
\end{proposition}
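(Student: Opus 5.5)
The plan is to apply Bang's Lemma \ref{lem:BB} to the one-sided function $g(x):=\phi(x)\mathbf 1_{x>0}$ (and, by oddness of $\phi$, this also handles $t<0$). Since $\phi$ is flat at $0$, $g$ is $C^\infty$ on $(-1,1)$ and vanishes for $x\le 0$. The derivatives satisfy $|\phi^{(n)}(t)|=|\psi^{(n)}(t)+(-1)^{n+1}\psi^{(n)}(-t)|\le 2K^{n+1}M_n$, so we take $A_n:=2K^{n+1}M_n$. The quotients are $\eta_n=A_n/A_{n-1}=K\mu_n$, which are nondecreasing by log-convexity, so Lemma \ref{lem:BB} applies with $r=1$.

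The hypothesis of the lemma reads $\sum_{j\ge\ell}1/\eta_j=K^{-1}T_M(\ell)>4x$. Set $c:=4K$ and fix small $x>0$. If $N_M(cx)=N$ is finite and at least $1$, then $T_M(N)\ge cx$. We need strict inequality, so we either apply the lemma with a slightly larger $c$ (say $c=8K$) or note that $T_M(N)\ge 8Kx>4Kx$. Taking $\ell=N$ then gives $|\phi(x)|\le A_0 2^{-N}=2K\,2^{-N_M(cx)}$, which is \eqref{eq:flat-bound} with $C=2K$. If $N_M(cx)=0$, i.e. no $N\ge1$ qualifies, the bound is trivial for $C\ge\sup|\phi|$. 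For $t<0$ we use $|\phi(t)|=|\phi(-t)|$. If $N_M(c|t|)=\infty$, the same argument with every $\ell$ gives $|\phi(t)|\le C2^{-\ell}$ for all $\ell$, hence $\phi(t)=0$, consistent with the convention $2^{-\infty}=0$.

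For the quasianalytic statement, assume $T_M(1)=\infty$. Then every tail $T_M(\ell)$ diverges, since removing finitely many finite terms $1/\mu_j$ does not change divergence. Thus for every $x\in(0,1)$ and every $\ell$ the hypothesis of Lemma \ref{lem:BB} holds, giving $|\phi(x)|\le A_02^{-\ell}$ for all $\ell$, so $\phi(x)=0$. Hence $g\equiv0$ on $(0,1)$ and $\phi\equiv0$ on $(-1,1)$, and by continuity on $[-1,1]$.

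The main technical point is bookkeeping rather than depth. One must check that the Bang hypotheses are met with the constant $K$ absorbed into $c$, and must treat the edge cases $N_M\in\{0,\infty\}$ and the strict versus non-strict inequality. One also has to verify that $g$ is genuinely smooth across $0$, which is exactly where the flatness of $\phi$ enters.
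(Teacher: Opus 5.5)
Your proposal is correct and follows the paper's proof: extend $\phi$ by zero to the left, apply Bang's Lemma~\ref{lem:BB} with $A_n=2K^{n+1}M_n$ and $\eta_n=K\mu_n$ (nondecreasing by log-convexity), turn the hypothesis $T_M(\ell)>4Kx$ into the bound $2^{-N_M(cx)}$, and let $\ell\to\infty$ when $T_M(1)=\infty$. Your explicit treatment of the strict inequality (taking $c=8K$), of the edge cases $N_M\in\{0,\infty\}$, and your use of the oddness of $\phi$ for $t<0$ spell out what the paper condenses into ``equivalently'' and ``the same argument applied to $x\mapsto\phi(-x)$''.
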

Note that $N_M(r)$ is roughly the ``inverse'' of the function $r=T_M(N)$.
\begin{proof}
	Extend $\phi$ by zero to the left:
	\[
	F(x):=
	\begin{cases}
		0,& x\le 0,\\
		\phi(x),& 0<x<1.
	\end{cases}
	\]
	Since $\phi$ is flat at $0$, the extension $F$ belongs to $C^\infty([-1,1])$ and obeys
	\[
	\sup_{x\in[-1,1]}|F^{(n)}(x)|\le 2K^{n+1}M_n.
	\]
	Apply Lemma \ref{lem:BB} with $A_n:=2K^{n+1}M_n$. Then
	\[
	\eta_n=\frac{A_n}{A_{n-1}}=\frac{KM_n}{M_{n-1}}=K\mu_n,
	\]
	which is nondecreasing by log-convexity. Therefore, if $x\in(0,1)$ and $T_M(\ell)>4Kx$, then
	\[
	|F(x)|\le 2\cdot 2^{-\ell}=2^{1-\ell}.
	\]
	Equivalently, there are constants $C,c>0$ such that
	\[
	|\phi(x)|\le C\,2^{-N_M(cx)}
	\qquad (0<x\ll 1).
	\]
	Applying the same argument to $x\mapsto \phi(-x)$ gives \eqref{eq:flat-bound} for both signs of $t$.
	
	If $T_M(1)=\infty$, then $T_M(\ell)=\infty$ for every $\ell$. Fix $x\in(0,1)$ and let $\ell\to\infty$ in Lemma \ref{lem:BB}. We obtain $F(x)=0$. Thus $\phi(x)=0$ for $x>0$, and by symmetry also for $x<0$.
\end{proof}
\begin{remark}\label{rm3}{\rm It is natural to estimate the flat functions by Taylor's theorem and Legendre transform. Indeed, since $\phi$ is flat at 0, we have
\begin{equation}\label{taylor}
	|\phi(t)|\le \inf_{n\ge1}\frac{K^{n+1}M_n |t|^n}{n!}=K\exp\Big(-\Phi^*\Big(\log\frac1{K|t|}\Big)\Big)
\end{equation}
where $\Phi(n)=\log(M_n/n!)$ and 
\begin{equation}\label{leg}
	\Phi^*(y) := \sup_{n \geq 1}\{ny - \Phi(n)\}
\end{equation}
is the Legendre transform of $\Phi$. This Taylor--Legendre method works for the Gevrey classes and yields Proposition \ref{prop2}, but it cannot produce the correct estimates for the refined Gevrey classes in Proposition \ref{prop1}. For example, if $\phi\in G_2^2$, the Taylor--Legendre method only gives, for some constant $c>0$,
\[|\phi(t)|\ls \exp\Big(-\exp\Big(\frac{c}{|t|(\log\frac1{|t|})^2}\Big)\Big),\ \ |t|\ll1,\] 
while the correct estimate is given by Proposition \ref{prop1}: for some constants $B,\gamma>0$
\[|\phi(t)|\ls \exp\Big(-B\exp\Big(\exp\Big(\gamma |t|^{-1}\Big)\Big)\Big),\ \ |t|\ll1.\]
Nevertheless, we observe that this Taylor--Legendre method can still produce correct estimates for the intermediate regularity classes of smooth functions that lie outside every Gevrey class. See Remark \ref{rm4}.
}
\end{remark}

\subsection{Proof of Theorem \ref{thm:main3}}
	
By the Denjoy--Carleman theorem (see H\"ormander \cite[Theorem 1.3.8]{hor1}) and Lemma \ref{finitetype}, we only need to prove part (ii) of Theorem~\ref{thm:main3} when $\phi$ is flat at $0$. By Proposition~\ref{prop:flat}, after shrinking constants if needed, there exist $C_1,c_1>0$ such that
\[
|\phi(t)|\le C_1 2^{-N}
\qquad \text{whenever } 0<t\le t_N:=c_1 T_M(N).
\]
Since $T_M(N)\downarrow 0$, the intervals $I_N:=(t_{N+1},t_N]\ (N\gg 1)$
form a shell decomposition near the origin. We write
\[
|m(\lambda)|\lesssim 1+\sum_{N\ge N_0}\int_{I_N}\min\{2,\lambda|\phi(t)|\}\,\frac{dt}{t}
\]
for some fixed large $N_0$. On $I_N$ we have $|\phi(t)|\le C_1 2^{-N}$, hence
\begin{equation}\label{eq:shellsum}
	|m(\lambda)|
	\lesssim 1+\sum_{N\ge N_0}\min\{1,\lambda 2^{-N}\}\log\frac{T_M(N)}{T_M(N+1)}.
\end{equation}
For $N\ls\log\la$, we obtain for some $c>0$
\begin{align*}
	\sum_{N_0\le N\ls \log\la}\log\frac{T_M(N)}{T_M(N+1)}
	\lesssim 1+\log\frac1{T_M(c\log\la+1)}.
\end{align*}

For $N\gg \log\la$, 
by the assumption that $T_M(N)\ge C_0/\mu_N$ for some $C_0>1$ and all large $N$, we have for all large $N$
\[
T_M(N+1)=T_M(N)\Bigl(1-\frac1{\mu_NT_M(N)}\Bigr)\approx T_M(N).
\]
Hence
\[
\sum_{N\gg \log\la}\lambda 2^{-N}\log\frac{T_M(N)}{T_M(N+1)}
\lesssim \sum_{N\gg \log\la}\lambda2^{-N}
\lesssim 1.
\]
Combining the two ranges yields
\[
|m(\lambda)|\ls 1+\log\frac1{T_M(c\log\lambda)}.
\]

\section{Gevrey class}\label{sec5}

We prove Theorem \ref{thm:main1} in this section. For $s\ge1$, fix $\psi\in G^s([-1,1])$ and define $\phi(t)=\psi(t)-\psi(-t)$. By Lemma \ref{finitetype}, it remains to handle the case in which $\phi$ is flat at $0$.
We need to estimate the rate of growth of $\phi$ near $0$. Since $M_n=(n!)^s$, we can apply Proposition \ref{prop:flat} with 
$$T_M(N)\approx N^{1-s}\quad \text{and}\quad N_M(r)\approx r^{-1/(s-1)}$$ 
when $s>1$. Note that $T_M(1)=\infty$ when $s=1$.

\begin{proposition}[Flat-point estimate]\label{prop2}
	Let $s\ge1$, and let $\phi\in G^s([-1,1])$ be flat at $0$, i.e. $\phi^{(n)}(0)=0\ (n\ge 0).$
Then the following statements hold.

\smallskip
\noindent
\textup{(i)} If $s=1$, then $\phi\equiv 0$ on $[-1,1]$.

\smallskip
\noindent
\textup{(ii)} If $s>1$, then there exist constants $A,B>0$ such that for all sufficiently small $|t|$,
\begin{equation}\label{ub2}
	|\phi(t)|\le A\exp\bigl(-B |t|^{-1/(s-1)}\bigr).
\end{equation}
\end{proposition}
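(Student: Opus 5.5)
Both parts follow from results already in the paper, and I plan to give two proofs of (ii). The first is a direct application of Proposition~\ref{prop:flat} with $M_n=(n!)^s$. The second is the Taylor--Legendre route of Remark~\ref{rm3}, which gives a self-contained argument.

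\textbf{Setup.} Note that $M_n=(n!)^s$ is log-convex with $M_0=1$, and that $\mu_n=n^s$ is nondecreasing. Then
\[
T_M(N)=\sum_{j\ge N} j^{-s}=\infty \quad (s=1), \qquad \frac{N^{1-s}}{s-1}\le T_M(N)\le \frac{N^{1-s}}{s-1}+N^{-s}\lesssim N^{1-s} \quad (s>1).
\]

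\textbf{Part (i).} Here $T_M(1)=\infty$. We run the argument of the last paragraph of the proof of Proposition~\ref{prop:flat} on $\phi$ itself, which is flat at $0$; that argument needs only flatness and the bounds \eqref{defdc}. Extend $\phi$ by zero on $x\le0$ and apply Lemma~\ref{lem:BB} with $A_n=2K^{n+1}(n!)^s$. The hypothesis $\sum_{j\ge\ell}1/\eta_j=\infty>4x$ holds for every $\ell$, so $|\phi(x)|\le A_0 2^{-\ell}$ for all $\ell$. Hence $\phi(x)=0$ for $x>0$. The same argument applied to $x\mapsto\phi(-x)$ gives $\phi(x)=0$ for $x<0$.

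\textbf{Part (ii), first proof.} Again apply Lemma~\ref{lem:BB} to the zero extension, now with $\eta_n=K n^s$. We need $\ell$ with $T_M(\ell)>4Kx$. Using $T_M(\ell)\ge \ell^{1-s}/(s-1)$, it suffices that $\ell^{1-s}>4K(s-1)x$, that is, $\ell< (4K(s-1)x)^{-1/(s-1)}$.

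Choose $\ell=\lfloor \tfrac12(4K(s-1)x)^{-1/(s-1)}\rfloor$. For small $x$ this is at least $1$. Then
\[
|\phi(x)|\le 2\cdot2^{-\ell}\le A\exp\bigl(-Bx^{-1/(s-1)}\bigr).
\]
The same bound for negative $t$ follows by symmetry.

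\textbf{Part (ii), second proof.} Since $\phi$ is flat, Taylor's theorem gives
\[
|\phi(t)|\le \inf_n \frac{K^{n+1}(n!)^s|t|^n}{n!}=K\inf_n (K|t|)^n (n!)^{s-1}\le K\inf_n\bigl(K|t|\,n^{s-1}\bigr)^n.
\]
Choose $n\approx (eK|t|)^{-1/(s-1)}$, so that $K|t|\,n^{s-1}\le e^{-1}$. This gives $|\phi(t)|\le K e^{-n}$, which is of the form \eqref{ub2}.

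\textbf{Main obstacle.} The only delicate point is bookkeeping. One must check that $\ell\ge1$ (equivalently, that $t$ is small), and that the constants $A,B$ depend only on $K$ and $s$.
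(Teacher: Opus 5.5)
Your proposal is correct and matches the paper: the paper obtains Proposition~\ref{prop2} exactly by inserting $M_n=(n!)^s$ into Proposition~\ref{prop:flat}, i.e.\ Bang's Lemma~\ref{lem:BB}. That substitution gives $T_M(N)\approx N^{1-s}$ and $N_M(r)\approx r^{-1/(s-1)}$ for $s>1$, and $T_M(1)=\infty$ for $s=1$; Remark~\ref{rm3} also notes that your second, Taylor--Legendre argument works for Gevrey classes. The only slip is cosmetic: Lemma~\ref{lem:BB} gives $|\phi(x)|\le A_0 2^{-\ell}$ with $A_0=2K$ (or $K$), not $2\cdot 2^{-\ell}$, and this is absorbed into $A$.
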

The estimate \eqref{ub2} is essentially sharp, since it is classical that $\exp(-|t|^{-1/(s-1)})$ is flat at $0$ and belongs to $G^s([-1,1])$. By Theorem \ref{thm:main3} we have $m(\la)=O(1)$ when $s=1$ and $m(\la)=O(\log\log\la)$ when $s>1$.
This proves the upper bound in Theorem~\ref{thm:main1}.

\subsection{ Sharpness}

Fix $s>1$ and set $a=\frac{1}{s-1}$.
Define
\[
\psi(t)=
\begin{cases}
	0,& t\le 0,\\[2pt]
	e^{-t^{-a}},& t>0.
\end{cases}
\]
It is classical that $\psi\in G^s([-1,1])$ and is flat at $0$.

For this choice,
\[
m(\lambda)
=
\int_0^1 \frac{e^{i\lambda\psi(t)}-1}{t}\,dt.
\]
With the substitution $u=\psi(t)=e^{-t^{-a}}$, one computes
\[
\frac{dt}{t}=(s-1)\frac{du}{u\log(1/u)},
\]
hence
\[
m(\lambda)
=
(s-1)\int_0^{e^{-1}}
\frac{e^{i\lambda u}-1}{u\log(1/u)}\,du.
\]
Let $\lambda_n=2\pi n$ and
\[
I_j:=\Bigl[\frac{j+1/4}{n},\,\frac{j+3/4}{n}\Bigr],\quad j=1,2,3,\dots.
\]
On \(I_j\), we have \(1-\cos(2\pi n u)\ge 1\).
Since the weight
\[
w(u):=
\frac1{u\,\log(1/u)}
\]
is decreasing for small \(u\), summing over \(1\le j\ls n\) yields, for some fixed $c_0\in (0,1/e)$,
\[
-\Re m(\lambda_n)
\gtrsim
(s-1)\int_{1/n}^{c_0} w(u)\,du\gs (s-1)\log\log n.
\]
Therefore
\[
|m(\lambda_n)|\ge -\Re m(\lambda_n) \gtrsim (s-1)\log\log \lambda_n.
\]
This shows that the upper bound $O(\log\log\la)$ is sharp.

\section{Refined Gevrey class}\label{sec6}
We prove Theorem \ref{thm:main2} in this section. For $s\ge1$ and an integer $k\ge1$, fix $\psi\in G_k^s([-1,1])$ and define $\phi(t)=\psi(t)-\psi(-t)$. By Lemma \ref{finitetype}, we only need to consider the case in which $\phi$ is flat at $0$. Since $M_n=(\log^{(k)} n)^s\prod_{j=1}^{k-1}\log^{(j)} n$ for large $n$, we can apply Proposition \ref{prop:flat} with 
$$T_M(N)\approx (\log^{(k)}N)^{1-s}\quad \text{and}\quad N_M(r)\approx \exp^{(k)}(r^{-1/(s-1)})$$ 
when $s>1$. Here $\exp^{(k)}$ is the $k$-fold iterated exponential. Note that $T_M(1)=\infty$ when $s=1$.

\begin{proposition}[Flat-point estimate]\label{prop1}
	For $s\ge1$ and integer $k\ge1$, let $\phi\in G_k^s([-1,1])$ be flat at $0$, i.e. $\phi^{(n)}(0)=0\ (n\ge 0).$
	Then the following statements hold.
	
	\smallskip
	\noindent
	\textup{(i)} If $s=1$, then $\phi\equiv 0$ on $[-1,1]$.
	
	\smallskip
	\noindent
	\textup{(ii)} If $s>1$, then there exist constants $A,B,\gamma>0$ such that for all sufficiently small $|t|$,
	\begin{equation}\label{ub1}
		|\phi(t)|\le A\exp\bigl(-B \exp^{(k)}(\gamma |t|^{-1/(s-1)})\bigr).
	\end{equation}
\end{proposition}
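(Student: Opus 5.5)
The plan is to apply Proposition \ref{prop:flat} with $M_n = n!\,Q_{k,s}(n)^n$, the bound defining $G_k^s$. Since that bound is only required for large $n$, we first replace $M_n$ for small $n$ so that $\sup|\phi^{(n)}|\le K^{n+1}M_n$ holds for every $n\ge0$ after enlarging $K$. We also adjust finitely many terms so that $M$ is log-convex, keeping $M_0=1$.

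\textbf{Quotient sequence.} The next step is to compute $\mu_n = M_n/M_{n-1}$ asymptotically. Write $\log M_n = \log n! + n\log Q(n)$. Then
\[
\log\mu_n = \log n + \log Q(n) + (n-1)\bigl(\log Q(n)-\log Q(n-1)\bigr).
\]
The last term is $O\bigl(n\cdot Q'(n)/Q(n)\bigr)=O\bigl(1/\log n\bigr)=o(1)$. Hence
\[
\mu_n \approx n\,Q_{k,s}(n)=n\prod_{j=1}^{k-1}\log^{(j)}n\,(\log^{(k)}n)^s .
\]
Log-convexity, i.e. monotonicity of $\mu_n$ for large $n$, follows from the fact that $n\mapsto n\log Q(n)$ is convex for large $n$. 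One checks this by computing the second derivative: $2Q'/Q+n(Q'/Q)'$, whose leading part is $1/(n\log n)>0$. Finitely many initial terms can then be modified to make the whole sequence log-convex.

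\textbf{Tail function.} Comparing the sum with an integral, and using that $\frac{d}{dx}(\log^{(k)}x)^{1-s}=(1-s)\bigl(x\prod_{j<k}\log^{(j)}x\bigr)^{-1}(\log^{(k)}x)^{-s}$, we get $T_M(N)\approx(\log^{(k)}N)^{1-s}$ when $s>1$ and $T_M(1)=\infty$ when $s=1$.

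\textbf{Part (i).} This follows from the last assertion of Proposition \ref{prop:flat}, applied with $\psi$ replaced by a suitable function. Note that Proposition \ref{prop:flat} is phrased for $\phi=\psi(t)-\psi(-t)$, but its proof only uses the bounds on $\phi$ and its flatness. So we run the same Bang argument, Lemma \ref{lem:BB}, directly on the zero extension of $\phi$ to each side, with $A_n=K^{n+1}M_n$ and $\eta_n=K\mu_n$.

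\textbf{Part (ii).} Here $s>1$. The condition $T_M(N)\ge r$ reads $c'(\log^{(k)}N)^{1-s}\ge r$, i.e. $\log^{(k)}N\le (c'/r)^{1/(s-1)}$. Therefore
\[
N_M(r)\ge \tfrac12\exp^{(k)}\bigl(\gamma_0 r^{-1/(s-1)}\bigr)
\]
for small $r$, where the factor $\tfrac12$ absorbs the integer part. Proposition \ref{prop:flat} then gives
\[
|\phi(t)|\le C2^{-N_M(c|t|)}\le A\exp\bigl(-B\exp^{(k)}(\gamma|t|^{-1/(s-1)})\bigr),
\]
with $\gamma=\gamma_0c^{-1/(s-1)}$ and $B=\tfrac12\log 2$.

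\textbf{Main obstacle.} The only delicate step is the asymptotics of $\mu_n$ together with the comparison constants. We only need $\mu_n\le C\,nQ(n)$ from above, since that gives the lower bound on $T_M$, which is what yields a lower bound on $N_M$. We must make sure that the $o(1)$ correction and the monotonicity are handled, and that constants inside $\exp^{(k)}$ only rescale $\gamma$. This holds because $(\log^{(k)}N)^{1-s}$ is changed by at most a bounded factor when $N$ is multiplied by a constant.
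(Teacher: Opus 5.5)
Your proposal is correct and follows the paper's route. The paper likewise applies Proposition \ref{prop:flat} (Bang's Lemma \ref{lem:BB}) with $M_n=n!\,Q_{k,s}(n)^n$, using $T_M(N)\approx(\log^{(k)}N)^{1-s}$, $N_M(r)\approx\exp^{(k)}(r^{-1/(s-1)})$ for $s>1$, and $T_M(1)=\infty$ for $s=1$. Your additional checks are details the paper leaves implicit, and you handle them correctly: the asymptotics $\mu_n\approx nQ_{k,s}(n)$, eventual log-convexity with the initial terms adjusted (which only rescales $M_n$ by a constant absorbed into $K$), and running the Bang argument directly on a general flat $\phi$.
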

The estimate \eqref{ub1} is essentially sharp, since $\exp(-\exp^{(k)}(|t|^{-1/(s-1)}))$ is flat at $0$ and belongs to $G_k^s([-1,1])$ (see Section \ref{sec9.1}). By Theorem \ref{thm:main3} we have $m(\la)=O(1)$ when $s=1$ and $m(\la)=O(\log^{(k+2)}\la)$ when $s>1$.
This proves the upper bound in Theorem~\ref{thm:main2}.

\subsection{Sharpness}\label{6harp}
For convenience, we denote $E_1(x)=e^x,\ E_{j+1}(x)=\exp(E_j(x)),\ j\ge1$. Let \(k\ge 1\) and \(s>1\) and
\[
a:=\frac1{s-1},
\qquad
\psi(t):=
\begin{cases}
	0,& t\le 0,\\[1mm]
	\exp\!\bigl(-E_k(t^{-a})\bigr),& t>0.
\end{cases}
\]
We will prove that \(\psi\in G_k^s([-1,1])\) in Section \ref{sec9.1} and that $|m(\lambda_n)| \approx \log^{(k+2)} \lambda_n$ along a sequence $\lambda_n\to\infty$ in the following.

Since \(\psi(-t)=0\) for \(t>0\),
\[
m(\lambda)=\int_0^1 \frac{e^{i\lambda \psi(t)}-1}{t}\,dt.
\]
Set
\[
u=\psi(t)=\exp\!\bigl(-E_k(t^{-a})\bigr).
\]
Write \(x=t^{-a}\). Then
\[
\frac{du}{u}=-E_k'(x)\,dx
=
-\Bigl(\prod_{j=1}^k \log^{(j)}(1/u)\Bigr)\,dx
\]
and
\[
dx=-a\,x\,\frac{dt}{t}
=
-a\,\log^{(k+1)}(1/u)\,\frac{dt}{t}.
\]
Hence
\[
\frac{dt}{t}
=
(s-1)w(u)du,
\]
where $$w(u)=
\frac1{u\,\log(1/u)\,\log^{(2)}(1/u)\cdots \log^{(k+1)}(1/u)}.$$
Therefore
\[
m(\lambda)
=
(s-1)\int_0^{u_0}
(e^{i\lambda u}-1)w(u)
du
\]
for some fixed \(u_0\in(0,1)\).

Now take \(\lambda_n:=2\pi n\). Then
\[
-\Re m(\lambda_n)
=
(s-1)\int_0^{u_0}
(1-\cos(2\pi n u))w(u)du.
\]
Let
\[
I_j:=\Bigl[\frac{j+1/4}{n},\,\frac{j+3/4}{n}\Bigr],\qquad j=1,2,3,\dots.
\]
On \(I_j\), we have \(1-\cos(2\pi n u)\ge 1\). Since the weight $w(u)$ is decreasing for small \(u\), summing over \(1\le j\ls n\) yields for some fixed \(c_0\in(0,u_0)\)
\[
-\Re m(\lambda_n)
\gtrsim (s-1)
\int_{1/n}^{c_0} w(u)\,du\gs (s-1)\log^{(k+2)}n.
\]

Thus
\[
|m(\lambda_n)|
\ge -\Re m(\lambda_n)
\gtrsim (s-1)\log^{(k+2)}\lambda_n.
\]
This proves the sharpness of the upper bound $O(\log^{(k+2)}\la)$.

\section{Beyond Gevrey classes}\label{sec7}
In this section, we investigate classes of smooth functions that lie outside every Gevrey class. Jézéquel \cite{jez} proved a trace formula conjectured by Dyatlov--Zworski \cite{dz2016} for Anosov flows in dynamical systems that holds for certain intermediate regularity classes. See also \cite{CL,ptt2015,ptt2018}.

Let $I=[-1,1]$. Let $M=(M_n)_{n\ge 0}$ be a positive sequence with $M_n=\exp(cn^\alpha)$ ($\alpha>1$, $c>0$). It is log-convex. We consider the Denjoy--Carleman class $\mathcal{C}_M$ associated with this sequence. If $\psi\in \mathcal{C}_M(I)$, then there exists a constant $K>0$ such that for all $n\ge 0$
\begin{equation}\label{defdc1}
	\sup_{t\in I}|\psi^{(n)}(t)|\le K^{n+1}\exp(cn^\alpha).
\end{equation}
It is essentially the class used by Jézéquel \cite{jez}, and it is larger than any Gevrey class since $\exp(cn^\alpha)\gg (n!)^s$ for any $\alpha,s>1$ and $c>0$.

\begin{theorem}\label{thm:main4}
		Let $\psi\in \mathcal{C}_M(I)$. Then we have $m(\la)=O((\log\la)^{1-\frac1\alpha})$. 
	This bound is sharp: there exists a real-valued $\psi\in \mathcal{C}_M(I)$ such that
	\[
	|m(\lambda_n)| \approx (\log\la_n)^{1-\frac1\alpha}
	\quad\text{along a sequence }\lambda_n\to\infty.
	\]
\end{theorem}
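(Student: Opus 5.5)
The plan is to treat the upper bound by the Taylor--Legendre method of Remark \ref{rm3} rather than Bang's lemma. For $M_n=\exp(cn^\alpha)$ the quotients $\mu_n\approx\exp(c\alpha n^{\alpha-1})$ grow so fast that $T_M(N)$ decays faster than geometrically. The hypothesis $\mu_NT_M(N)\ge C_0$ of Theorem \ref{thm:main3} can therefore fail (e.g.\ for $\alpha\ge2$), and in any case Proposition \ref{prop:flat} would only give the weak bound $|\phi(t)|\lesssim\exp(-c(\log\frac1t)^{1/(\alpha-1)})$.

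\textbf{Upper bound.} By Lemma \ref{finitetype} we may assume that $\phi=\psi(t)-\psi(-t)$ is flat at $0$. By \eqref{taylor} we have $|\phi(t)|\le K\exp(-\Phi^*(\log\frac1{K|t|}))$ with $\Phi(n)=cn^\alpha-\log n!$. Since $\log n!\ge0$, we may simply use $\Phi^*(y)\ge\sup_n\{ny-cn^\alpha\}$. Choosing $n\approx (y/(c\alpha))^{1/(\alpha-1)}$, this gives $\Phi^*(y)\ge c'y^{\alpha'}$ with $\alpha'=\alpha/(\alpha-1)$. Hence
\[
|\phi(t)|\lesssim \exp\bigl(-c''(\log\tfrac1{|t|})^{\alpha'}\bigr),\qquad |t|\ll1 .
\]
Then we bound $|m(\lambda)|\lesssim 1+\int_0^{\delta}\min\{1,\lambda|\phi(t)|\}\frac{dt}{t}$ and substitute $y=\log\frac1t$. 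The region $y\le y_\lambda:=(2\log\lambda/c'')^{1/\alpha'}$ contributes at most $y_\lambda\approx(\log\lambda)^{(\alpha-1)/\alpha}=(\log\lambda)^{1-1/\alpha}$. On the region $y>y_\lambda$ we have $\lambda e^{-c''y^{\alpha'}}\le e^{-c''y^{\alpha'}/2}$, so it contributes $\int_{y_\lambda}^\infty\lambda e^{-c''y^{\alpha'}}dy\lesssim1$. This proves $m(\lambda)=O((\log\lambda)^{1-1/\alpha})$.

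\textbf{Sharpness.} Take $\psi(t)=\exp(-b(\log\frac1t)^{\alpha'})$ for $t>0$ and $\psi(t)=0$ for $t\le0$, with $b>0$ a large constant to be fixed. Then $\psi(-t)=0$ for $t>0$, so $m(\lambda)=\int_0^1(e^{i\lambda\psi(t)}-1)\frac{dt}{t}$. Set $u=\psi(t)$ and $L=\log\frac1u=by^{\alpha'}$. Then $\frac{dt}{t}=-dy$ and $y=(L/b)^{1/\alpha'}$, so $\frac{dt}{t}=w(u)\,du$ with $w(u)\approx u^{-1}(\log\frac1u)^{1/\alpha'-1}$. This weight is positive and decreasing for small $u$. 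We then take $\lambda_n=2\pi n$ and the intervals $I_j$ of Sections \ref{sec5}--\ref{sec6}, on which $1-\cos(2\pi nu)\ge1$. Exactly as there, this gives
\[
-\Re m(\lambda_n)\gtrsim\int_{1/n}^{c_0}w(u)\,du\approx(\log n)^{1/\alpha'}=(\log\lambda_n)^{1-1/\alpha}.
\]
The matching upper bound along $\lambda_n$ comes from the first part of the proof.

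\textbf{Main obstacle.} The step I expect to be hardest is verifying $\psi\in\mathcal C_M$, i.e.\ $|\psi^{(n)}(t)|\le K^{n+1}e^{cn^\alpha}$ with the \emph{given} constant $c$. I would first try Cauchy estimates. The function $\psi$ extends holomorphically to the disc $|z-t|<t/2$ (principal branch of $\log$). On this disc $\log\frac1z=\log\frac1t+O(1)$, so $\Re(\log\frac1z)^{\alpha'}\ge(\log\frac1t)^{\alpha'}-C(\log\frac1t)^{\alpha'-1}$, which is at least $\tfrac12(\log\frac1t)^{\alpha'}$ for small $t$. This yields
\[
|\psi^{(n)}(t)|\le n!\,(2/t)^n\exp\bigl(-\tfrac b2 y^{\alpha'}\bigr),\qquad y=\log\tfrac1t .
\]
Maximizing $ny-\frac b2y^{\alpha'}$ over $y$ gives a Legendre conjugate equal to $C_\alpha b^{-(\alpha-1)}n^\alpha$. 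Choosing $b$ large makes this at most $\frac c2 n^\alpha$. The remaining factor $n!\,2^n=e^{O(n\log n)}$ is $\le K^{n}e^{\frac c2n^\alpha}$ because $n\log n=o(n^\alpha)$. Flatness at $0$ and the bounds for $t$ away from $0$ are routine. Changing $b$ only affects the implicit constants in the lower-bound computation above.
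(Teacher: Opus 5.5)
Your proof is correct, and for the theorem itself it follows the paper. The upper bound is the Taylor--Legendre flat-point bound of Proposition \ref{prop4}, followed by splitting $\int\min\{1,\lambda|\phi|\}\,dt/t$ in the variable $\log\frac1t$. The lower bound uses the same flat example, the substitution $u=\psi(t)$, $\lambda_n=2\pi n$ and the intervals $I_j$.

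The genuine difference is in the example and the proof that it lies in $\mathcal{C}_M$. The paper takes $b=1$ and checks the derivative bounds by real-variable bookkeeping: Lemma \ref{lem:dtdu} for differentiating through $u=\log\frac1t$, Fa\`a di Bruno with a Bell-number count in Lemma \ref{lem:gk}, then Young's inequality. It concludes $|\psi^{(n)}|\le K^{n+1}e^{cn^\alpha}$ only ``for suitable constants $K,c>0$''. Your Cauchy estimate on the disc $|z-t|<t/2$ is shorter and makes the dependence on $b$ explicit. The large parameter $b$ is not cosmetic, because the classes for different $c$ really are different. Your own Taylor--Legendre computation shows that membership for small $c$ forces $|\psi(t)|\lesssim\exp\bigl(-C_\alpha c^{-1/(\alpha-1)}(\log\tfrac1t)^{\alpha'}\bigr)$, which the unscaled example violates. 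So your version gives sharpness for every prescribed $c>0$. The paper's verification, as written, only covers $c$ above some threshold; it could be repaired the same way by inserting $b$ before applying Young's inequality.

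Two minor slips. First, $(\log\frac1t)^{\alpha'}$ is in general not $C^\infty$ up to $t=1$. The example should therefore live on $[-\delta,\delta]$, as the paper does with $[-1/e,1/e]$, rather than be integrated over $(0,1)$; this is harmless by the paper's remark on changing the interval. Second, your motivating remark about Theorem \ref{thm:main3} overstates things. Its hypothesis fails only for $\alpha>2$: $\mu_NT_M(N)$ tends to $(1-e^{-2c})^{-1}>1$ when $\alpha=2$, and to $\infty$ when $\alpha<2$. The decisive reason to avoid Bang's lemma here is the weaker flat-point bound noted in Remark \ref{rm4}.
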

The following flat-point estimate is the key to the theorem.
\begin{proposition}[Flat-point estimate]\label{prop4}
Let $\phi\in \mathcal{C}_M(I)$ be flat at $0$, i.e. $\phi^{(n)}(0)=0\ (n\ge 0).$
	Then there exist constants $A,B>0$ such that for all sufficiently small $|t|$,
	\begin{equation}\label{ub4}
		|\phi(t)|\le A\exp\Bigl(-B \Big(\log\frac1{|t|}\Big)^{\frac\alpha{\alpha-1}}\Bigr).
	\end{equation}
\end{proposition}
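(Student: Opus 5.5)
Following Remark \ref{rm3}, I would use the Taylor--Legendre method rather than Bang's lemma; the remark itself singles this class out as one where Taylor--Legendre gives the correct answer (Bang's lemma does not apply directly anyway, since $T_M(1)<\infty$ decays geometrically here and the condition $\mu_N T_M(N)\ge C_0$ fails). Since $\phi$ is flat at $0$, Taylor's theorem with Lagrange remainder about $0$ gives, for every $n\ge1$,
\[
|\phi(t)|\le \frac{\sup_I|\phi^{(n)}|}{n!}|t|^n\le \frac{K^{n+1}e^{cn^\alpha}|t|^n}{n!}\le K\exp\Big(cn^\alpha-n\log\frac1{K|t|}\Big),
\]
where I simply discard the helpful factor $1/n!$ (bounding it by $1$). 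The key point is that the factor $e^{cn^\alpha}$ grows superexponentially, so the $n!$ is negligible in the optimization and the Legendre transform of $cn^\alpha$ decides the answer.

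Set $y=\log\frac1{K|t|}$, which is large for small $|t|$. I will minimize $cn^\alpha-ny$ over integers $n\ge1$. The real minimizer is $n_*=(y/(c\alpha))^{1/(\alpha-1)}$, where the value is $-(\alpha-1)c\,n_*^\alpha=-c'(\alpha)\,y^{\alpha/(\alpha-1)}$. I will take $n=\lfloor n_*\rfloor$ (which is $\ge1$ for $y$ large) and check that rounding changes the exponent by at most a constant multiple of the main term. By convexity, $g(n)=cn^\alpha-ny$ satisfies $g(\lfloor n_*\rfloor)\le g(n_*)+|g'(\xi)|$ with $|g'(\xi)|\le y$. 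This is $o(y^{\alpha/(\alpha-1)})$ since $\alpha/(\alpha-1)>1$. Alternatively, I would just choose $n\approx \tfrac12 n_*$ directly and get $g(n)\le -B' y^{\alpha/(\alpha-1)}$.

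Finally, since $y=\log\frac1{|t|}-\log K\ge\frac12\log\frac1{|t|}$ for $|t|$ small, I conclude $|\phi(t)|\le A\exp(-B(\log\frac1{|t|})^{\alpha/(\alpha-1)})$, which is \eqref{ub4}. The only mildly delicate step is the integer rounding and keeping the constants uniform; I do not expect a real obstacle. One can also note this is sharp by testing on $\exp(-(\log\frac1t)^{\alpha/(\alpha-1)})$, but sharpness is not needed for the proposition.
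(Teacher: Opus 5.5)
Your proof is correct and is the same Taylor--Legendre argument the paper uses: Taylor's theorem at the flat point, discarding $1/n!$, and minimizing $cn^\alpha-ny$ near $n_*=(y/(c\alpha))^{1/(\alpha-1)}$ with $y=\log\frac{1}{K|t|}$; your treatment of the integer rounding is more careful than the paper's ``$\approx$''. One correction to your parenthetical aside: Bang's lemma does apply here, because it needs only log-convexity (the condition $\mu_N T_M(N)\ge C_0$ is a hypothesis of Theorem~\ref{thm:main3}(ii), not of Lemma~\ref{lem:BB}, and it in fact holds when $1<\alpha\le 2$), but, as Remark~\ref{rm4} explains, it yields only the weaker exponent $\frac{1}{\alpha-1}$ in place of $\frac{\alpha}{\alpha-1}$.
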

The estimate \eqref{ub4} is essentially sharp, since $\exp\Bigl(- \Big(\log\frac1{|t|}\Big)^{\frac\alpha{\alpha-1}}\Bigr)$ belongs to $\mathcal{C}_M(I)$ (see Section \ref{7sharp}). It can be proved by the Taylor--Legendre method in Remark \ref{rm3}. Indeed, by Taylor's theorem and minimization, we obtain for some constant $B>0$
\[|\phi(t)|\le \inf_{n\ge1}\frac{K^{n+1}\exp(cn^\alpha)|t|^n}{n!}\approx \exp\Bigl(-B \Big(\log\frac1{|t|}\Big)^{\frac\alpha{\alpha-1}}\Bigr),\quad |t|\ll1,\]
where the minimum is achieved at $n_*\approx (y/\alpha)^{\frac1{\alpha-1}}$ with $y=\log\frac1{K|t|}$. 

\begin{remark}\label{rm4}{\rm Applying Proposition \ref{prop:flat} gives only a weaker upper bound:
	\[|\phi(t)|\ls \exp\Bigl(-B \Big(\log\frac1{|t|}\Big)^{\frac1{\alpha-1}}\Bigr),\quad |t|\ll1.\]
	Roughly speaking, the Taylor--Legendre method works well for the Gevrey classes and other larger classes, while Proposition \ref{prop:flat} (Bang's Lemma \ref{lem:BB}) works well for the Gevrey classes and the refined Gevrey classes. See also Remark \ref{rm3} and Bang \cite[p.~143]{bang}. From this perspective, the Gevrey class is exactly the borderline between these two methods.}
\end{remark}

\subsection{Proof of the upper bound in Theorem \ref{thm:main4}}
By the reduction above, it suffices to consider the
case in which \(\phi\) is flat at \(0\). Let $\beta=\alpha/(\alpha-1)$. Then Proposition~\ref{prop4} gives
constants \(A,B>0\) and \(\delta\in(0,1)\) such that
\begin{equation}\label{eq:flat-small}
	|\phi(t)|\le A\exp\!\Bigl(-B\bigl(\log(1/t)\bigr)^{\beta}\Bigr)
	\qquad (0<t\le \delta).
\end{equation}
Using \(|e^{ix}-e^{iy}|\le \min\{2,|x-y|\}\), we obtain
\[
|m(\lambda)|
\ls 1+\int_0^\delta \min\{1,\lambda|\phi(t)|\}\,\frac{dt}{t}.
\]
Let $u_\la>0$ solve $\la\exp(-Bu^\beta)=1$. Then $u_\la\approx (\log\la)^{1/\beta}$. We split the interval and change variables to obtain
\begin{align*}
|m(\la)| 
&\ls u_\la+ \int_{u_\la}^\infty \la e^{-Bu^\beta}du\\
&\ls u_\la+\int_{u_\la}^\infty \la e^{-Bu_\la^{\beta-1}u}du\\
&\ls u_\la+u_\la^{1-\beta}\ls u_\la.
\end{align*}
This proves the upper bound in Theorem \ref{thm:main4}.

\subsection{Sharpness}\label{7sharp}  
Let $\alpha>1$ and $\beta=\alpha/(\alpha-1)$ and 
\[\psi(t)=\begin{cases}
	0,& t\le 0,\\[1mm]
	\exp(-(\log\frac1{t})^{\beta}),& 0<t\le1/e.
\end{cases}\]
We may take $I=[-1/e,1/e]$. It is harmless and simplifies the calculation.
We show that $\psi\in \mathcal{C}_M(I)$ in Section \ref{sec9.2}
 and that $|m(\lambda_n)| \approx (\log\la_n)^{1-\frac1\alpha}$
along a sequence $\lambda_n\to\infty$ in the following.

Since $\psi(-t)=0$ for $t>0$,
\[
m(\lambda)=\int_0^{e^{-1}}\frac{e^{i\lambda\psi(t)}-1}{t}\,dt.
\]
Set
\[
s:=\psi(t)=e^{-(\log(1/t))^\beta}.
\]

Differentiating gives
\[
\frac{dt}{t}
=
\frac1\beta \frac{ds}{s(\log(1/s))^{1-\frac1\beta}}
=
\frac1\beta \frac{ds}{s(\log(1/s))^{1/\alpha}}.
\]
Write
\[
w(s):=\frac{1}{s(\log(1/s))^{1/\alpha}},
\qquad 0<s\le e^{-1}.
\]

Take $\lambda_n:=2\pi n.$
Then
\[
-\Re m(\lambda_n)
=
\frac1\beta \int_0^{e^{-1}} (1-\cos(2\pi n s))\,w(s)\,ds.
\]
Fix
\[
c_0:=\frac1{4e},
\qquad
N_n:=\lfloor c_0 n\rfloor.
\]
For $1\le j\le N_n$ set
\[
I_{j,n}:=\Bigl[\frac{j+1/4}{n},\,\frac{j+3/4}{n}\Bigr].
\]
Since $(j+3/4)/n\le c_0+1/n<e^{-1}$ for all large $n$, these intervals lie inside $(0,e^{-1})$.
Moreover, on $I_{j,n}$ one has
\[
1-\cos(2\pi n s)\ge 1.
\]
Therefore
\[
-\Re m(\lambda_n)\ge \frac1\beta \sum_{j=1}^{N_n}\int_{I_{j,n}} w(s)\,ds.
\]
Because $w$ is decreasing,
\[
\int_{I_{j,n}} w(s)\,ds
\ge
\frac{1}{2n}w\Bigl(\frac{j+1}{n}\Bigr)
\ge
\frac12\int_{(j+1)/n}^{(j+2)/n} w(s)\,ds.
\]
Summing in $j$ gives
\[
-\Re m(\lambda_n)
\ge
\frac{1}{2\beta}\int_{2/n}^{(N_n+2)/n} w(s)\,ds
\gtrsim
\int_{2/n}^{c_0} \frac{ds}{s(\log(1/s))^{1/\alpha}}\approx
(\log n)^{1-\frac1\alpha}.
\]
Since $\lambda_n=2\pi n,$
\[
|m(\lambda_n)|\ge -\Re m(\lambda_n)\gtrsim (\log \la_n)^{1-\frac1\alpha}.
\]
This completes the proof.

\section{Further discussions}\label{sec8}
In this section, we consider the Denjoy--Carleman classes that are substantially larger than the Gevrey and intermediate classes discussed earlier. Let $I=[-1,1]$, and let $M=(M_n)_{n\ge0}$ be a positive sequence of the form $M_n=\exp^{(k)}(c n^\alpha)$, where $c,\alpha>0$ and $k\ge2$ is an integer. For large $n$, this sequence is log-convex, so it defines a Denjoy--Carleman class $\mathcal C_M$. If $\psi\in\mathcal C_M(I)$, then there exists a constant $K>0$ such that
\begin{equation}\label{defdc2}
    \sup_{t\in I} |\psi^{(n)}(t)|
    \le K^{n+1}\exp^{(k)}(c n^\alpha),
    \qquad n\ge0.
\end{equation}
Our goal is to understand how the oscillatory integral bound changes in this case. 
In what follows, we show that as the function classes become larger, the maximal growth of $m(\la)$ increases to the universal $C^\infty$ bound $o(\log\la)$.

\begin{theorem}\label{thm:main5}
	Let  $\psi\in \mathcal{C}_M(I)$. Then $m(\la)=O((\log\la)/(\log^{(k)}\la)^{\frac1\alpha})$. This bound is sharp: there exists a real-valued $\psi\in \mathcal{C}_M(I)$ such that
	\[
	|m(\lambda_n)| \approx (\log\la_n)/(\log^{(k)}\la_n)^{\frac1\alpha}
	\quad\text{along a sequence }\lambda_n\to\infty.
	\]

\end{theorem}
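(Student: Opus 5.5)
The plan is to follow the scheme of Theorem \ref{thm:main4}. We use the Taylor--Legendre method of Remark \ref{rm3} rather than Proposition \ref{prop:flat}, since Remark \ref{rm4} indicates that Bang's lemma loses for classes this large. By Lemma \ref{finitetype} we may assume that $\phi=\psi(t)-\psi(-t)$ is flat at $0$. Write $y=\log\frac1{K|t|}$. The estimate \eqref{taylor} gives
\[
|\phi(t)|\le K\inf_{n\ge1}\exp\bigl(-ny+\log M_n-\log n!+O(n)\bigr),\qquad \log M_n=\exp^{(k-1)}(cn^\alpha).
\]
We do not try to compute the Legendre transform exactly; a good choice of $n$ is enough. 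Take
\[
n(y):=\Bigl\lfloor \bigl(c^{-1}\log^{(k-1)}(y/2)\bigr)^{1/\alpha}\Bigr\rfloor,
\]
so that $\exp^{(k-1)}(cn^\alpha)\le y/2$, which is much smaller than $ny/4$. This choice yields the flat-point estimate
\[
|\phi(t)|\le A\exp\bigl(-B\,G(y)\bigr),\qquad G(y):=y\,(\log^{(k-1)}y)^{1/\alpha},
\]
for $|t|\ll1$ and some constants $A,B>0$. This is the analogue of Proposition \ref{prop4}.

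For the upper bound on $m(\lambda)$, start from $|m(\lambda)|\lesssim 1+\int_0^\delta\min\{1,\lambda|\phi(t)|\}\frac{dt}{t}$ and change variables to $y$, noting that $dt/t=-dy$.
\begin{itemize}
\item Let $y_\lambda$ solve $BG(y_\lambda)=\log\lambda$. Then $y_\lambda\approx \log\lambda/(\log^{(k-1)}\log\lambda)^{1/\alpha}=\log\lambda/(\log^{(k)}\lambda)^{1/\alpha}$, and the range $y\le y_\lambda$ contributes $O(y_\lambda)$.
\item For $y\ge y_\lambda$, the function $G$ is convex with $G'(y)\gtrsim(\log^{(k-1)}y)^{1/\alpha}$. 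Hence $BG(y)\ge \log\lambda+BG'(y_\lambda)(y-y_\lambda)$.
\item The tail is therefore bounded by $\int_{y_\lambda}^\infty e^{-BG'(y_\lambda)(y-y_\lambda)}dy\lesssim 1$.
\end{itemize}
Together these give $m(\lambda)=O\bigl(\log\lambda/(\log^{(k)}\lambda)^{1/\alpha}\bigr)$.

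For sharpness, mimic Section \ref{7sharp}. Set $\psi(t)=0$ for $t\le0$ and $\psi(t)=\exp(-G(\log\frac1t))$ for $0<t\le t_0$, with $I$ shrunk harmlessly to $[-t_0,t_0]$. Put $u=\psi(t)$, so that $\frac{dt}{t}=\frac{du}{u\,G'(G^{-1}(\log\frac1u))}=:w(u)\,du$, where $w$ is decreasing for small $u$. Take $\lambda_n=2\pi n$ and the intervals $I_j=[\frac{j+1/4}{n},\frac{j+3/4}{n}]$ as before. The same averaging argument gives
\[
-\Re m(\lambda_n)\gtrsim\int_{2/n}^{c_0}w(u)\,du=\int_{Y_0}^{G^{-1}(\log(n/2))}dy\approx G^{-1}(\log n)\approx \frac{\log n}{(\log^{(k)}n)^{1/\alpha}}.
\]

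The main obstacle is proving $\psi\in\mathcal C_M(I)$, i.e. $|\psi^{(n)}(t)|\le K^{n+1}\exp^{(k)}(cn^\alpha)$ uniformly for small $t$, with the constant $c$ matching the hypothesis; if necessary we rescale $G$ by a constant factor, which affects only implicit constants. I would first try Cauchy's estimates. Extend $\psi$ holomorphically to a disc around $t$ of radius $\rho t$, where $\rho$ will be optimized in terms of $n$, so that $\psi^{(n)}(t)\lesssim n!(\rho t)^{-n}\sup|\psi|$ on that disc. Then maximize $n!(\rho t)^{-n}e^{-G(\log 1/t)/2}$ over $t$. This is again a Legendre-type computation, and its value should be about $\exp(\exp^{(k-1)}(Cn^\alpha))$, i.e. of size $\exp^{(k)}(Cn^\alpha)$. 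If the complex extension is awkward near the branch points of the iterated logarithms, the fallback is Faà di Bruno's formula applied to $\exp\circ(-G)\circ\log$, bounding the derivatives of $G(\log\frac1t)$ by $C^n n!\,t^{-n}G(\log\frac1t)$, in the same way as the deferred computations in Section \ref{sec9}.
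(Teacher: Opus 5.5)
Your argument is correct, and in three of its four steps it matches the paper's. The flat-point bound is the Taylor--Legendre estimate, with an explicit admissible $n$ in place of the minimizer $n_*\approx(\log^{(k-1)}y)^{1/\alpha}$. The upper bound is the same linearization of the exponent beyond $y_\lambda$. The lower bound uses the same example $\exp\bigl(-(\log\frac1t)(\log^{(k)}\frac1t)^{1/\alpha}\bigr)$ with the same averaging over the intervals $I_j$.

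The real difference is how you verify $\psi\in\mathcal C_M(I)$. The paper (Section \ref{sec9.3}) stays in real variables. It uses a chain-rule lemma for $\frac{d^n}{dt^n}g(\log\frac1t)$ with coefficient sum at most $n!$, and bounds on $A^{(r)}$ and $Q^{(r)}$ obtained by tracking the auxiliary functions $\eta_j$. It then applies Fa\`a di Bruno with a Bell-number count and a two-case optimization of $nu+n\log A(u)-uA(u)$.

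Your Cauchy-estimate route replaces all of this, and it needs no optimization in $\rho$; take $\rho=\frac12$ and $t$ small. On $|z-t|\le t/2$ you have $\log\frac1z=y+w$, where $y=\log\frac1t$ and $|w|<2$. So the iterated logarithms are holomorphic there, and there is no branch-point difficulty. On this strip $|G'|\lesssim(\log^{(k-1)}y)^{1/\alpha}$, hence $\Re G(\log\frac1z)\ge G(y)-C(\log^{(k-1)}y)^{1/\alpha}\ge\frac12G(y)$. This gives $|\psi^{(n)}(t)|\le n!\,2^n\exp\bigl(ny-\frac12G(y)\bigr)$. Splitting according to whether $(\log^{(k-1)}y)^{1/\alpha}\le 2n$ or not yields $\exp^{(k)}(Cn^\alpha)$, exactly as in the paper's two cases. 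This is the one estimate you should write out; your fallback via Fa\`a di Bruno is essentially the paper's proof.

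Finally, your rescaling remark is not cosmetic. Section \ref{sec9.3} only establishes membership for \emph{some} $c>0$, whereas the class fixes $c$. Replacing $G$ by $\kappa G$ with $\kappa$ large makes the effective constant of order $\kappa^{-\alpha}$, while the lower bound changes only by a factor of about $\kappa^{-1}$. So your version covers the prescribed $c$, a point the paper passes over.
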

As before, the key ingredient is a flat-point estimate.
\begin{proposition}[Flat-point estimate]\label{prop5}
	Let $\phi\in \mathcal{C}_M(I)$ be flat at $0$, i.e. $\phi^{(n)}(0)=0\ (n\ge 0).$
	Then there exist constants $A,B>0$ such that for all sufficiently small $|t|$,
\begin{equation}\label{ub5}
	|\phi(t)|\le A\exp\Bigl(-B \Big(\log\frac1{|t|}\Big)\Big(\log^{(k)}\frac1{|t|}\Big)^{\frac1\alpha}\Bigr).
\end{equation}
\end{proposition}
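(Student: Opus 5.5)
We will prove Proposition \ref{prop5} by the Taylor--Legendre method of Remark \ref{rm3}, as was done for Proposition \ref{prop4}. Since $\phi$ is flat at $0$ and $\phi\in\mathcal C_M(I)$, Taylor's theorem with remainder at $0$ gives, for every $n\ge1$ and $|t|\le1$,
\[
|\phi(t)|\le \frac{\sup_I|\phi^{(n)}|\,|t|^n}{n!}\le \frac{K^{n+1}\exp^{(k)}(cn^\alpha)|t|^n}{n!}.
\]
Dropping the favourable factor $1/n!$ (it only helps), we get
\[
|\phi(t)|\le K\exp\bigl(-\bigl(ny-\log\exp^{(k)}(cn^\alpha)\bigr)\bigr)=K\exp\bigl(-(ny-\exp^{(k-1)}(cn^\alpha))\bigr),
\]
where $y=\log\frac1{K|t|}$. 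It then remains to choose $n$ well, i.e. to bound the Legendre transform $\sup_n\{ny-\exp^{(k-1)}(cn^\alpha)\}$ from below.

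\textbf{Choice of $n$.} We take $n$ so that the penalty $\exp^{(k-1)}(cn^\alpha)$ is at most $ny/2$. Concretely, set
\[
n=n(y):=\Bigl\lfloor \Bigl(\tfrac1{c}\log^{(k-1)}\bigl(\tfrac{y}{2}\bigr)\Bigr)^{1/\alpha}\Bigr\rfloor,
\]
so that $\exp^{(k-1)}(cn^\alpha)\le y/2\le ny/2$ once $n\ge1$, which holds for $|t|$ small. Then
\[
ny-\exp^{(k-1)}(cn^\alpha)\ge \tfrac12 ny\gtrsim y\,\bigl(\log^{(k-1)}y\bigr)^{1/\alpha}.
\]

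\textbf{Matching the exponent.} This gives a bound with $(\log^{(k-1)}y)^{1/\alpha}$, where $y\approx\log\frac1{|t|}$. Since $\log^{(k-1)}y=\log^{(k-1)}\log\frac1{|t|}+O(1)=\log^{(k)}\frac1{|t|}+O(1)$, and $y\ge \frac12\log\frac1{|t|}$ for small $|t|$, we obtain
\[
|\phi(t)|\le A\exp\Bigl(-B\log\tfrac1{|t|}\bigl(\log^{(k)}\tfrac1{|t|}\bigr)^{1/\alpha}\Bigr),
\]
which is \eqref{ub5}. The constants absorb $K$, the floor in the choice of $n$, and the comparison between $y$ and $\log(1/|t|)$.

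\textbf{Main obstacle.} The only delicate point is the bookkeeping of iterated logarithms. One must check that replacing $\log^{(k-1)}(y/2)$ by $\log^{(k)}(1/|t|)$ costs only constant factors, which holds because iterated logs of comparable quantities are asymptotically equivalent. One must also check that the floor does not destroy $n\ge1$, which holds for $|t|$ sufficiently small. There is no need for Bang's lemma here; as Remark \ref{rm4} suggests, it would give a weaker bound for such large classes.
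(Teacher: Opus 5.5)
Your proof is correct and follows the same route as the paper: Taylor's theorem at the flat point, then the choice $n\approx(\log^{(k-1)}y)^{1/\alpha}$ with $y=\log\frac1{K|t|}$, which is exactly the minimizer the paper indicates in its Taylor--Legendre sketch. Your explicit choice of $n$, which forces $\exp^{(k-1)}(cn^\alpha)\le y/2$, fills in the details of the paper's one-line minimization and yields the upper bound the proposition requires.
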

The estimate \eqref{ub5} is essentially sharp, since $\exp\Bigl(- \Big(\log\frac1{|t|}\Big)\Big(\log^{(k)}\frac1{|t|}\Big)^{\frac1\alpha}\Bigr)$ belongs to $\mathcal{C}_M(I)$ (see Section \ref{8sharp}). It can be proved by the Taylor--Legendre method in Remark \ref{rm3}. Indeed, by Taylor's theorem and minimization, we obtain for some constant $B>0$
\[|\phi(t)|\le \inf_{n\ge1}\frac{K^{n+1}\exp^{(k)}(cn^\alpha)|t|^n}{n!}\approx \exp\Bigl(-B \Big(\log\frac1{|t|}\Big)\Big(\log^{(k)}\frac1{|t|}\Big)^{\frac1\alpha}\Bigr),\quad |t|\ll1,\]
where the minimum is achieved at $n_*\approx (\log^{(k-1)} y) ^{\frac1\alpha}$ with $y=\log\frac1{K|t|}$.

\subsection{Proof of the upper bound in Theorem \ref{thm:main5}}
By the reduction above, it suffices to consider the case in which \(\phi\) is flat at \(0\). Then Proposition~\ref{prop5} gives constants \(A,B>0\) and \(\delta\in(0,1)\) such that
\begin{equation}
	|\phi(t)|\le A\exp\Bigl(-B \Big(\log\frac1{t}\Big)\Big(\log^{(k)}\frac1{t}\Big)^{\frac1\alpha}\Big)
	\qquad (0<t\le \delta).
\end{equation}
Using \(|e^{ix}-e^{iy}|\le \min\{2,|x-y|\}\), we obtain
\[
|m(\lambda)|
\ls 1+\int_0^\delta \min\{1,\lambda|\phi(t)|\}\,\frac{dt}{t}.
\]
Let $u_\la>0$ solve $\la\exp(-Bu(\log^{(k-1)}u)^{\frac1\alpha})=1$. Then $u_\la\approx \frac{\log\la}{(\log^{(k)}\la)^{\frac1\alpha}}$. 

We split the interval and change variables to obtain
\begin{align*}
	|m(\la)|
	&\ls u_\la+ \int_{u_\la}^\infty \la \exp(-B u(\log^{(k-1)}u)^{\frac1\alpha})du\\
	&\ls u_\la+\int_{u_\la}^\infty \la\exp(-B(\log^{(k-1)}u_\la)^{\frac1\alpha}u)du\\
	&\ls u_\la+(\log^{(k)}\la)^{-1/\alpha}\ls u_\la.
\end{align*}
This proves the upper bound in Theorem \ref{thm:main5}.

\subsection{Sharpness}\label{8sharp}
 For $x$ sufficiently large we write
\[
L_1(x):=\log x,
\qquad
L_{j+1}(x):=\log L_j(x)\quad (j\ge 1),
\]
and
\[
E_1(x):=e^x,
\qquad
E_{j+1}(x):=\exp(E_j(x))\quad (j\ge 1).
\]
Fix $k\ge 2$, $\alpha>0$, and $0<\delta\ll1$.
Let $I=(-\delta,\delta)$ and define
\[
\psi(t):=
\begin{cases}
	0, & -\delta<t\le 0,\\[4pt]
	\exp\!\Bigl(-\bigl(\log(1/t)\bigr)\,L_k(1/t)^{1/\alpha}\Bigr), & 0<t<\delta.
\end{cases}
\]
We will show that $\psi\in \mathcal{C}_M(I)$ in Section \ref{sec9.3} 
and that $|m(\lambda_n)| \approx (\log\la_n)/(\log^{(k)}\la_n)^{\frac1\alpha}$
along a sequence $\lambda_n\to\infty$ in the following.

Since $k\ge 2$, for $u:=\log(1/t)$ this may be rewritten as
\[
\psi(t)=e^{-Q(u)},
\qquad
Q(u):=uL_{k-1}(u)^{1/\alpha}.
\]

Since $\psi(-t)=0$ for $t>0$,
\[
m(\lambda)=\int_0^{\delta}\frac{e^{i\lambda\psi(t)}-1}{t}\,dt.
\]
Set
\[
s:=\psi(t)=e^{-Q(u)}\in (0,s_0],
\qquad
s_0:=\psi(\delta)>0.
\]
Hence
\begin{equation}
	\label{eq:m-omega}
	m(\lambda)=\int_0^{s_0}(e^{i\lambda s}-1)\,\omega(s)\,ds,
	\qquad
	\omega(s):=\frac{1}{sQ'(u(s))}.
\end{equation}

For $0<s\le s_0\ll1$, 
\begin{equation}
	\label{eq:weight-comparison}
\omega(s)\approx 
	\frac{1}{sL_k(1/s)^{1/\alpha}}.
\end{equation}

Take $\lambda_n:=2\pi n.$
By \eqref{eq:m-omega},
\[
-\Re m(\lambda_n)=\int_0^{s_0}(1-\cos(2\pi ns))\,\omega(s)\,ds.
\]
Using the lower bound in \eqref{eq:weight-comparison},
\[
-\Re m(\lambda_n)
\gtrsim
\int_0^{s_0}(1-\cos(2\pi ns))\,\frac{ds}{sL_k(1/s)^{1/\alpha}}.
\]
Choose a fixed number $0<c_*<s_0/4$ and let $N_n:=\lfloor c_*n\rfloor$.
For $1\le j\le N_n$ set
\[
I_{j,n}:=\Bigl[\frac{j+1/4}{n},\,\frac{j+3/4}{n}\Bigr].
\]
For large $n$, all these intervals lie in $(0,s_0]$, and on each $I_{j,n}$ one has
\[
1-\cos(2\pi ns)\ge 1.
\]
Therefore
\[
-\Re m(\lambda_n)
\gtrsim
\sum_{j=1}^{N_n}\int_{I_{j,n}}\frac{ds}{sL_k(1/s)^{1/\alpha}}.
\]
The weight $s\mapsto \bigl(sL_k(1/s)^{1/\alpha}\bigr)^{-1}$ is decreasing on $(0,s_0]$, so
\[
\int_{I_{j,n}}\frac{ds}{sL_k(1/s)^{1/\alpha}}
\ge \frac12\int_{(j+1)/n}^{(j+2)/n}\frac{ds}{sL_k(1/s)^{1/\alpha}}.
\]
Summing in $j$ gives
\[
-\Re m(\lambda_n)
\gtrsim
\int_{2/n}^{c_*}\frac{ds}{sL_k(1/s)^{1/\alpha}}\approx
\frac{\log n}{L_k(n)^{1/\alpha}}.
\]

Hence
\begin{equation}
	\label{eq:lower-seq}
	|m(\lambda_n)|\ge -\Re m(\lambda_n)
	\gtrsim
	\frac{\log \lambda_n}{L_k(\lambda_n)^{1/\alpha}}.
\end{equation}
\section{Derivative bounds}\label{sec9}

\subsection{Sharp example in Section \ref{6harp}}\label{sec9.1}
Let $E_1(x)=e^x,\ E_{j+1}(x)=\exp(E_j(x)),\ j\ge1$. Let \(k\ge 1\) and \(s>1\) and
\[
a:=\frac1{s-1},
\qquad
\psi(t):=
\begin{cases}
	0,& t\le 0,\\[1mm]
	\exp\!\bigl(-E_k(t^{-a})\bigr),& t>0.
\end{cases}
\]
We will prove that \(\psi\in G_k^s([-1,1])\).
For \(x\ge 1\), write
\[
M_k(x):=\prod_{j=1}^{k-1}E_j(x),
\qquad
\Omega_k(x):=x\,M_k(x)
\]
(with the convention \(M_1(x)=1\), so \(\Omega_1(x)=x\)).

\begin{lemma}
	For every \(k\ge 1\) there exists \(A_k\ge 1\) such that for all \(r\ge 0\) and all
	\(x\ge 1\),
	\[
	E_k^{(r)}(x)\le r!\,A_k^r\,M_k(x)^r\,E_k(x).
	\]
\end{lemma}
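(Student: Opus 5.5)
Since $E_{k}=\exp\circ E_{k-1}$, we argue by induction on $k$. The approach is a Cauchy-estimate (complex-analytic) argument, which gives the $r!$ factor automatically. The key observation is that $E_k$ extends to an entire function, and on a disc of radius $\rho(x)\approx 1/(A_kM_k(x))$ around a real point $x\ge1$ we have $|E_k(z)|\le C E_k(x)$. Cauchy's inequality then yields
\[
|E_k^{(r)}(x)|\le r!\,\rho^{-r}\sup_{|z-x|=\rho}|E_k(z)|\le C\,r!\,(A_kM_k(x))^rE_k(x).
\]
The constant $C$ is absorbed by enlarging $A_k$ for $r\ge1$; the case $r=0$ is trivial.

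The main step is therefore the claim: there is $c_k>0$ such that for $|z-x|\le c_k/M_k(x)$, $x\ge1$,
\[
|E_j(z)-E_j(x)|\le E_j(x)\,\frac{M_{j+1}(x)}{M_k(x)}\cdot C_j c_k\quad (1\le j\le k),
\]
or more simply $|E_j(z)-E_j(x)|\le 1/E_{j+1}(x)\cdots$-type smallness. We prove it by induction on $j$, tracking the deviation $\delta_j:=|E_j(z)-E_j(x)|$. We have $\delta_1\le E_1(x)\,|e^{z-x}-1|\le 2E_1(x)|z-x|$. Next, $\delta_{j+1}=E_{j+1}(x)\,|e^{E_j(z)-E_j(x)}-1|\le 2E_{j+1}(x)\,\delta_j$ provided $\delta_j\le1$. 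Hence inductively $\delta_j\le 2^j E_1(x)\cdots E_j(x)\,|z-x| = 2^jM_{j+1}(x)|z-x|$. For $j\le k-1$ this is at most $2^jc_k M_{j+1}(x)/M_k(x)\le 2^jc_k\le 1$ once $c_k\le 2^{-k}$, so the hypothesis $\delta_j\le 1$ propagates. At the last step $|E_k(z)|\le E_k(x)e^{\delta_{k-1}}\le e\,E_k(x)$, using only $\mathrm{Re}$ and modulus bounds $|e^w|=e^{\mathrm{Re}\,w}\le e^{\mathrm{Re}\,E_{k-1}(x)+\delta_{k-1}}$.

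This gives the statement with $A_k=2^k$ (up to the constant $e$, absorbed for $r\ge1$ by replacing $A_k$ with $eA_k$). For $k=1$, $M_1=1$ and the bound $E_1^{(r)}=e^x$ is trivial. The only real care needed is to check that the disc radius $c_k/M_k(x)$ makes each intermediate $\delta_j$ at most $1$. Here $M_{j+1}\le M_k$ for $j\le k-1$ since all $E_i(x)\ge e>1$, so there is no genuine obstacle. I expect the argument to close directly.

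An alternative would be to induct via Faà di Bruno on $E_k=\exp(E_{k-1})$, using the generating-function majorant trick. This is messier, so I would use the Cauchy-estimate route.
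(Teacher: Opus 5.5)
Your argument is correct, but it takes a different route from the paper.

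The paper inducts on $k$ with the derivative bound itself as the hypothesis. It writes the Taylor series of $E_k$ at $x$ as $E_k(x)\exp\bigl(E_{k-1}(x+h)-E_{k-1}(x)\bigr)$. It then majorizes the inner series coefficientwise by $\sum_{m\ge1}A_{k-1}^mM_{k-1}(x)^mE_{k-1}(x)|h|^m$ and chooses $|h|=1/(2A_{k-1}M_k(x))$, so that this sum is at most $1$. Comparing coefficients gives $A_k=2eA_{k-1}$. This is a real-variable majorant-series argument (a coefficientwise Cauchy estimate), and the same template is reused in the next two lemmas for $g=E_k\circ x$ and $\psi=e^{-g}$.

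You never use derivative bounds for $E_{k-1}$. Instead you bound $|E_k|$ directly on the complex disc $|z-x|\le 2^{-k}/M_k(x)$, inducting on the level $j$ with the pointwise deviation bound $\delta_j\le 2^jM_{j+1}(x)|z-x|$, and then apply Cauchy's inequality once. The steps check out:
\begin{itemize}
\item $|e^w-1|\le e^{|w|}-1\le 2|w|$ for $|w|\le1$;
\item $M_{j+1}\le M_k$ for $j\le k-1$, so every $\delta_j\le 2^{j-k}\le 1/2$;
\item $|E_k(z)|=e^{\Re E_{k-1}(z)}\le e^{1/2}E_k(x)$.
\end{itemize}
Together these give $A_k=e\,2^k$.

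Your version makes the meaning of the scale $1/M_k(x)$ transparent: it is the radius on which $E_{k-1}$ moves by $O(1)$. The price is invoking complex analysis. The paper's version stays with power-series majorants and builds machinery it needs afterwards.

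One cosmetic point: your first formulation of the ``main step'' is muddled and is not what you prove. It has an extra factor $E_j(x)$, and it speaks of ``$1/E_{j+1}(x)$-type smallness,'' which does not hold. Replace it with the bound $\delta_j\le 2^jM_{j+1}(x)|z-x|$, which is what you actually establish and use.
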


\begin{proof}
	We argue by induction on \(k\).
	
	For \(k=1\), \(E_1^{(r)}(x)=e^x=E_1(x)\), so the claim is immediate.
	
	Assume it holds for \(k-1\). Since \(E_k=e^{E_{k-1}}\), the Taylor expansion
	of \(E_k(x+h)\) at \(x\) gives
	\[
	\sum_{r=0}^\infty \frac{E_k^{(r)}(x)}{r!}h^r
	=
	E_k(x)\exp\!\bigl(E_{k-1}(x+h)-E_{k-1}(x)\bigr).
	\]
	Taking absolute values and using the induction hypothesis,
	\[
	\sum_{r=0}^\infty \frac{|E_k^{(r)}(x)|}{r!}|h|^r
	\le
	E_k(x)\exp\!\left(
	\sum_{m=1}^\infty A_{k-1}^m M_{k-1}(x)^m E_{k-1}(x)\,|h|^m
	\right).
	\]
	Since \(M_k(x)=E_{k-1}(x)M_{k-1}(x)\), choosing
	\[
	|h|=\frac1{2A_{k-1}M_k(x)}
	\]
	makes the exponent bounded by \(1\). Hence
	\[
	\sum_{r=0}^\infty \frac{|E_k^{(r)}(x)|}{r!}|h|^r \le e\,E_k(x).
	\]
	Comparing coefficients gives
	\[
	E_k^{(r)}(x)\le r!\,(2eA_{k-1})^r M_k(x)^r E_k(x).
	\]
	So the induction closes with \(A_k:=2eA_{k-1}\).
\end{proof}

Now put
\[
x(t):=t^{-a}\qquad (0<t\le 1),
\qquad
g(t):=E_k(x(t)),
\qquad
e^{-g(t)}=\psi(t)\quad (t>0).
\]
We first estimate derivatives of \(x(t)\). Since
\[
x^{(m)}(t)=(-1)^m a(a+1)\cdots (a+m-1)\,t^{-a-m},
\]
there is a constant \(B_a\ge 1\) such that for all \(m\ge 1\),
\[
|x^{(m)}(t)|\le m!\,B_a^m\,t^{-m}x(t).
\]

\begin{lemma}
	There exists \(C_1\ge 1\) such that for all \(n\ge 0\) and all \(0<t\le 1\),
	\[
	|g^{(n)}(t)|\le n!\,C_1^n\,t^{-n}\Omega_k(x(t))^n\,g(t).
	\]
\end{lemma}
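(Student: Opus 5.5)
We prove the bound by majorant series (Cauchy-type estimates), as in the preceding lemma on $E_k^{(r)}$. Fix $t\in(0,1]$ and write $x=x(t)$.

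\emph{Inner function.} The bound $|x^{(m)}(t)|\le m!\,B_a^m t^{-m}x$ shows that for complex $|h|\le t/(2B_a)$,
\[
\sum_{m\ge1}\frac{|x^{(m)}(t)|}{m!}|h|^m\le x\sum_{m\ge1}(B_a|h|/t)^m\le 2B_a x|h|/t .
\]
Since this is at most $x$, every point of the perturbed argument satisfies $|x(t+h)-x|\le x$, and in particular stays in the region $x\ge 1$ where we work.

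\emph{Outer function.} By the preceding lemma,
\[
\sum_{r\ge1}\frac{E_k^{(r)}(x)}{r!}\rho^r\le E_k(x)\sum_{r\ge1}\bigl(A_kM_k(x)\rho\bigr)^r .
\]

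\emph{Composition.} We apply the standard majorant principle. If $f\ll F$ and $y\ll Y$ as formal power series with nonnegative coefficients and $y$ has no constant term, then $f(x+y(h))\ll F(Y(h))$; this follows from Faà di Bruno, whose coefficients are all nonnegative. Here $f=E_k$ expanded at $x$ and $y(h)=x(t+h)-x$, majorized by
\[
Y(h)=\frac{x B_a h/t}{1-B_a h/t}.
\]
Hence the Taylor majorant of $g$ at $t$ is
\[
E_k(x)\sum_{r\ge0}\bigl(A_kM_k(x)Y(h)\bigr)^r .
\]

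\emph{Choosing the radius.} Take
\[
h=\rho:=\frac{t}{4A_kB_a\,x\,M_k(x)}=\frac{t}{4A_kB_a\,\Omega_k(x)} .
\]
Since $x,M_k\ge1$ and $A_k\ge1$, we have $B_a\rho/t\le 1/4$, so $Y(\rho)\le \tfrac43 xB_a\rho/t$. Then
\[
A_kM_kY(\rho)\le \frac{4}{3}\cdot\frac14=\frac13 .
\]
The majorant series is therefore at most $\tfrac32E_k(x)=\tfrac32 g(t)$.

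\emph{Reading off coefficients.} Comparing coefficients gives
\[
\frac{|g^{(n)}(t)|}{n!}\rho^n\le \tfrac32\,g(t),
\]
that is, $|g^{(n)}(t)|\le \tfrac32\,n!\,(4A_kB_a)^n t^{-n}\Omega_k(x)^n g(t)$. The constant $\tfrac32$ is absorbed into $C_1^n$ for $n\ge1$ by taking $C_1=6A_kB_a$, and the case $n=0$ is trivial.

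The step that needs the most care is justifying the composition of majorants. The lemma controls only the derivatives of $E_k$ at the single point $x$, so we need an honest majorant of the whole Taylor series of $E_k$ there, and then compose. I would state this via Faà di Bruno to avoid any convergence issues. All coefficients in that formula are nonnegative, so it gives exactly
\[
|g^{(n)}(t)|\le n!\,[h^n]\,E_k(x)\sum_r\bigl(A_kM_kY(h)\bigr)^r,
\]
and Cauchy's coefficient bound $[h^n]S\le S(\rho)/\rho^n$, valid for series with nonnegative coefficients, finishes the argument.
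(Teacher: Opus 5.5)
Your proposal is correct and is essentially the paper's proof. Both compose the previous lemma's bound for $E_k^{(r)}(x)$ with the geometric majorant for $x(t+h)-x(t)$, take the same radius $|h|=t/(4A_kB_a\Omega_k(x(t)))$, and compare coefficients. The only difference in the estimate is that you get the constant $\tfrac32$, while the paper uses the cruder bound $1/(1-q)\le 2$ and gets $2$.

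Your Faà di Bruno formulation of the composition step is a cleaner justification than the paper's. It also makes unnecessary your side remark that the perturbed argument ``stays in the region $x\ge 1$''. That remark is in any case inaccurate, since $|x(t+h)-x|\le x$ does not imply $x(t+h)\ge 1$.
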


\begin{proof}
	Using the Taylor expansion of \(E_k(x(t+h))\) around \(x(t)\),
	\[
	\sum_{n=0}^\infty \frac{g^{(n)}(t)}{n!}h^n
	=
	\sum_{r=0}^\infty \frac{E_k^{(r)}(x(t))}{r!}\,
	\bigl(x(t+h)-x(t)\bigr)^r .
	\]
	Taking absolute values and using the previous lemma,
	\[
	\sum_{n=0}^\infty \frac{|g^{(n)}(t)|}{n!}|h|^n
	\le
	g(t)\sum_{r=0}^\infty
	\left(
	A_k M_k(x(t))
	\sum_{m=1}^\infty \frac{|x^{(m)}(t)|}{m!}|h|^m
	\right)^r .
	\]
	Also,
	\[
	\sum_{m=1}^\infty \frac{|x^{(m)}(t)|}{m!}|h|^m
	\le
	x(t)\sum_{m=1}^\infty (B_a t^{-1}|h|)^m
	=
	x(t)\frac{B_a t^{-1}|h|}{1-B_a t^{-1}|h|}.
	\]
	Choose
	\[
	|h|=\frac1{4A_kB_a\,t^{-1}\Omega_k(x(t))}.
	\]
	Then \(B_a t^{-1}|h|\le 1/4\), hence the geometric factor above is at most
	\(2x(t)B_a t^{-1}|h|\), and therefore
	\[
	A_kM_k(x(t))
	\sum_{m=1}^\infty \frac{|x^{(m)}(t)|}{m!}|h|^m
	\le \frac12 .
	\]
	So
	\[
	\sum_{n=0}^\infty \frac{|g^{(n)}(t)|}{n!}|h|^n \le 2g(t).
	\]
	Comparing coefficients yields the claim.
\end{proof}

\begin{lemma}
	There exists \(C_2\ge 1\) such that for all \(n\ge 0\) and all \(0<t\le 1\),
	\[
	|\psi^{(n)}(t)|
	\le
	n!\,C_2^n\,t^{-n}\Omega_k(x(t))^n\,e^{-g(t)/2}.
	\]
\end{lemma}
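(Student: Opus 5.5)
We follow the same majorant-series (Cauchy-type) argument used in the two preceding lemmas, now applied to the composition $\psi=e^{-g}$. Write the Taylor expansion of $\psi(t+h)=e^{-g(t)}\exp\bigl(-(g(t+h)-g(t))\bigr)$ in powers of $h$. Taking absolute values coefficientwise, we get the majorant
\[
\sum_{n=0}^\infty \frac{|\psi^{(n)}(t)|}{n!}|h|^n
\le e^{-g(t)}\exp\Bigl(\sum_{n=1}^\infty \frac{|g^{(n)}(t)|}{n!}|h|^n\Bigr).
\]
This holds because the coefficients of $\exp(-G)$ with $G$ a power series without constant term are bounded in absolute value by those of $\exp(|G|)$, where $|G|$ denotes the series of absolute values of the coefficients of $G$.

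By the previous lemma, the inner sum is at most
\[
g(t)\sum_{n\ge1}\bigl(C_1 t^{-1}\Omega_k(x(t))|h|\bigr)^n .
\]
We choose
\[
|h|=\frac{1}{4C_1\,t^{-1}\Omega_k(x(t))}.
\]
The geometric sum is then $\frac{1/4}{1-1/4}=\frac13\le\frac12$, so the inner sum is at most $g(t)/2$. The right-hand side is therefore at most $e^{-g(t)}e^{g(t)/2}=e^{-g(t)/2}$.

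Comparing the $n$-th coefficient gives
\[
|\psi^{(n)}(t)|\le n!\,|h|^{-n}e^{-g(t)/2}=n!\,(4C_1)^n t^{-n}\Omega_k(x(t))^n e^{-g(t)/2}.
\]
This is the claim with $C_2=4C_1$.

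The main point to justify is the legitimacy of the Taylor expansions. We treat all identities as formal power series in $h$ (equalities of Taylor coefficients via Faà di Bruno), so convergence is not needed. Alternatively, note that $g$ is real-analytic on $(0,1]$. In either reading the coefficientwise majorization, $\exp$ of a majorant dominating the expansion of $\exp(-\cdot)$, is valid. The evaluation at the specific $|h|$ then involves only series of nonnegative terms, so a single coefficient is bounded by the whole sum.

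There is one subtlety: the previous lemma bounds $g^{(n)}$ with the factor $g(t)$ itself, which is huge. This is exactly why only half of the exponential decay $e^{-g}$ survives. With the chosen $h$, the exponential of the majorant costs at most $e^{g/2}$, and this is absorbed by $e^{-g}$, leaving $e^{-g/2}$. No further obstacle is expected.
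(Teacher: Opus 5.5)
Your proposal is correct and matches the paper's proof essentially verbatim. It uses the same majorant bound for $e^{-g(t)}\exp\bigl(-(g(t+h)-g(t))\bigr)$, the same choice $|h|=\bigl(4C_1t^{-1}\Omega_k(x(t))\bigr)^{-1}$, and the same constant $C_2=4C_1$; your reading of the identities as formal power series makes explicit a justification the paper leaves implicit.
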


\begin{proof}
	Since \(\psi=e^{-g}\),
	\[
	\sum_{n=0}^\infty \frac{\psi^{(n)}(t)}{n!}h^n
	=
	e^{-g(t)}\exp\!\bigl(-(g(t+h)-g(t))\bigr).
	\]
	Taking absolute values and using the previous lemma,
	\[
	\sum_{n=0}^\infty \frac{|\psi^{(n)}(t)|}{n!}|h|^n
	\le
	e^{-g(t)}
	\exp\!\left(
	\sum_{m=1}^\infty C_1^m t^{-m}\Omega_k(x(t))^m g(t)\,|h|^m
	\right).
	\]
	Choose
	\[
	|h|=\frac1{4C_1\,t^{-1}\Omega_k(x(t))}.
	\]
	Then the series in the exponent is at most \(g(t)/2\), so
	\[
	\sum_{n=0}^\infty \frac{|\psi^{(n)}(t)|}{n!}|h|^n
	\le e^{-g(t)/2}.
	\]
	Comparing coefficients gives
	\[
	|\psi^{(n)}(t)|
	\le
	n!\,(4C_1)^n\,t^{-n}\Omega_k(x(t))^n\,e^{-g(t)/2}.
	\]
\end{proof}

Because \(a=1/(s-1)\), we have \(t^{-1}=x(t)^{\,s-1}\). Hence
\[
t^{-1}\Omega_k(x(t))
=
x(t)^s \prod_{j=1}^{k-1}E_j(x(t)).
\]
Therefore
\begin{equation}\label{eq:star}
	|\psi^{(n)}(t)|
	\le
	n!\,C_2^n
	\left(
	x(t)^s\prod_{j=1}^{k-1}E_j(x(t))
	\right)^n
	e^{-E_k(x(t))/2}.
\end{equation}
Now choose $n_0$ large and define
\[
x_n:=\log^{(k)}\bigl((n+n_0)^2\bigr).
\]
Then
\[
E_k(x_n)=(n+n_0)^2,
\qquad
x_n\approx \log^{(k)}n,
\qquad
E_j(x_n)=\log^{(k-j)}\bigl((n+n_0)^2\bigr)\approx \log^{(k-j)}n
\]
for $1\le j\le k-1$. Hence
\[
x_n^s\prod_{j=1}^{k-1}E_j(x_n)\lesssim Q_{k,s}(n).
\]

If $1\le x\le x_n$, then by monotonicity,
\[
x^s\prod_{j=1}^{k-1}E_j(x)\le x_n^s\prod_{j=1}^{k-1}E_j(x_n)\lesssim Q_{k,s}(n).
\]

If $x\ge x_n$, set
\[
R(x):=x^s\prod_{j=1}^{k-1}E_j(x).
\]
Since
\[
\log R(x)=s\log x+x+E_1(x)+\cdots+E_{k-2}(x)=o(E_{k-1}(x)),
\]
we have $R(x)\le E_k(x)^{1/2}$ for all large $x$. Therefore, writing $y=E_k(x)$,
\[
R(x)^n e^{-E_k(x)/2}\le y^{n/2}e^{-y/2}.
\]
As $y\ge E_k(x_n)=(n+n_0)^2\ge n$, the function $y^{n/2}e^{-y/2}$ is decreasing, so
\[
y^{n/2}e^{-y/2}\le (n+n_0)^n e^{-(n+n_0)^2/2}\ls1.
\]
Combining this with \eqref{eq:star}, we obtain
\[
\sup_{0<t\le 1}|\psi^{(n)}(t)|\le K^{n+1} n!Q_{k,s}(n)^n
\]
for some constant $K>0$ and all large $n$. Since \(\psi^{(n)}(t)\to 0\) as
\(t\downarrow 0\) by \eqref{eq:star}, the extension by \(0\) to \((-\infty,0]\) is
\(C^\infty\) and flat at \(0\). Therefore \(\psi\in G_k^s([-1,1])\).

\subsection{Sharp example in Section \ref{7sharp}}\label{sec9.2}
Let $\alpha>1$ and $\beta=\alpha/(\alpha-1)$ and 
\[\psi(t)=\begin{cases}
	0,& t\le 0,\\[1mm]
	\exp(-(\log\frac1{t})^{\beta}),& 0<t\le1/e.
\end{cases}\]
We take $I=[-1/e,1/e]$ and show that $\psi\in \mathcal{C}_M(I)$ with $M_n=\exp(c n^\alpha)$ for some $c>0$.

For $0<t<e^{-1}$ set
\[
u:=\log(1/t)\ge 1,
\qquad
g(u):=e^{-u^\beta},
\]
so that $\psi(t)=g(u)$.

\begin{lemma}
	\label{lem:dtdu}
	For every $n\ge 1$ there exist real numbers $a_{n,k}$, $1\le k\le n$, such that
	\[
	\frac{d^n}{dt^n}g(\log(1/t))
	=
	t^{-n}\sum_{k=1}^n a_{n,k}\,g^{(k)}(u),
	\]
	and
	\[
	\sum_{k=1}^n |a_{n,k}|\le n!.
	\]
\end{lemma}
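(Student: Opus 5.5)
Induction on $n$ is the natural route, with the recursion on the coefficients read off from one differentiation. The case $n=1$ is direct: $\frac{d}{dt}g(\log(1/t)) = -t^{-1}g'(u)$ because $du/dt=-1/t$, so $a_{1,1}=-1$ and $\sum_k|a_{1,k}|=1=1!$.

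For the inductive step, assume
\[
\frac{d^n}{dt^n}g(\log(1/t))=t^{-n}\sum_{k=1}^n a_{n,k}\,g^{(k)}(u).
\]
Differentiating once more in $t$, with the product rule and $du/dt=-t^{-1}$, gives
\[
\frac{d^{n+1}}{dt^{n+1}}g(\log(1/t))
=-n\,t^{-n-1}\sum_{k=1}^n a_{n,k}\,g^{(k)}(u)
-t^{-n-1}\sum_{k=1}^n a_{n,k}\,g^{(k+1)}(u).
\]
This is again of the required form, with
\[
a_{n+1,k}=-n\,a_{n,k}-a_{n,k-1}\qquad(1\le k\le n+1),
\]
using the conventions $a_{n,0}=0$ and $a_{n,n+1}=0$. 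The triangle inequality then yields
\[
\sum_{k=1}^{n+1}|a_{n+1,k}|\le n\sum_{k=1}^{n}|a_{n,k}|+\sum_{k=1}^{n}|a_{n,k}| =(n+1)\sum_{k=1}^n|a_{n,k}|\le (n+1)!,
\]
which closes the induction.

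Equivalently, one can note that $a_{n,k}=(-1)^n\left[{n\atop k}\right]$, the unsigned Stirling numbers of the first kind up to sign, whose row sum is exactly $n!$. This identification is not needed, since the recursion already gives the bound.

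Nothing here is genuinely hard. The only thing to check carefully is the index bookkeeping: the $t^{-n}$ factor produces the $-n$ coefficient, and the chain rule on $g^{(k)}(u)$ shifts the index by one. Both contributions must be counted exactly once, and their combined weight $n+1$ is what gives the factorial.
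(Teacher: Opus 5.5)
Your proposal is correct and follows the paper's proof: the same induction with $a_{1,1}=-1$, the same recursion $a_{n+1,k}=-(n\,a_{n,k}+a_{n,k-1})$ coming from $\frac{d}{dt}\bigl(t^{-n}F(u)\bigr)=-t^{-n-1}\bigl(nF(u)+F'(u)\bigr)$, and the same triangle-inequality bound $(n+1)\,n!$. Your additional identification $a_{n,k}=(-1)^n$ times the unsigned Stirling numbers of the first kind is also correct. It shows that the bound $n!$ is in fact an equality, though this is not needed.
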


\begin{proof}
	For $n=1$ this is immediate:
	\[
	\frac{d}{dt}g(\log(1/t))=-t^{-1}g'(u),
	\]
	so we may take $a_{1,1}=-1$.
	
	Assume the statement true for some $n\ge 1$.
	Differentiate
	\[
	\frac{d^n}{dt^n}g(\log(1/t))
	=
	t^{-n}\sum_{k=1}^n a_{n,k}\,g^{(k)}(u).
	\]
	Since $u'=-t^{-1}$, we get
	\[
	\frac{d}{dt}\Bigl(t^{-n}F(u)\Bigr)
	=
	-t^{-n-1}\bigl(nF(u)+F'(u)\bigr).
	\]
	Applying this with $F(u)=\sum_{k=1}^n a_{n,k}g^{(k)}(u)$ yields
	\[
	\frac{d^{n+1}}{dt^{n+1}}g(\log(1/t))
	=
	t^{-n-1}\sum_{k=1}^{n+1} a_{n+1,k}\,g^{(k)}(u),
	\]
	where, with the convention $a_{n,0}=a_{n,n+1}=0$,
	\[
	a_{n+1,k}=-(n a_{n,k}+a_{n,k-1}).
	\]
	Hence
	\[
	\sum_{k=1}^{n+1}|a_{n+1,k}|
	\le
	n\sum_{k=1}^n|a_{n,k}|+\sum_{k=1}^n|a_{n,k}|
	\le
	(n+1)n!=(n+1)!.
	\]
	This closes the induction.
\end{proof}

We next estimate $g^{(k)}$.

\begin{lemma}
	\label{lem:gk}
	There exists a constant $C_0\ge 1$ such that for all integers $k\ge 1$ and all $u\ge 1$,
	\[
	|g^{(k)}(u)|
	\le
	C_0^k\,k!\,k^k\,u^{(\beta-1)k}e^{-u^\beta}.
	\]
\end{lemma}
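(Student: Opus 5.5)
The plan is to use the Cauchy-estimate technique of Section \ref{sec9.1}: complexify, bound $g$ on a disc around $u$, and read off the Taylor coefficients. For real $u\ge1$ and complex $h$ with $|h|\le\rho$, we have $|g(u+h)|=e^{-\Re (u+h)^\beta}$. Cauchy's inequality then gives
\[
|g^{(k)}(u)|\le k!\,\rho^{-k}\sup_{|h|=\rho}e^{-\Re (u+h)^\beta}.
\]
Everything reduces to choosing $\rho$ depending on $k$ and $u$ so that $\Re(u+h)^\beta\ge u^\beta-O(k)$ on the circle. The extra factor $e^{O(k)}$ is absorbed into $C_0^k$. The target factor $k^k u^{(\beta-1)k}$ suggests taking
\[
\rho=\frac{u}{C\,k\,u^{\beta-1}}\cdot\min\{1,\dots\},
\]
that is, $\rho\approx \min\{u/2,\ 1/(k u^{\beta-1})\}$ up to constants.

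First I treat the main regime $\rho=c/(k u^{\beta-1})\le u/2$. Write $(u+h)^\beta=u^\beta(1+h/u)^\beta$ with $|h/u|\le 1/2$ and the principal branch. Expanding,
\[
(1+z)^\beta=1+\beta z+O_\beta(|z|^2).
\]
This gives
\[
\Re(u+h)^\beta\ge u^\beta-\beta u^{\beta-1}|h|-C_\beta u^{\beta-2}|h|^2 .
\]
With $|h|=\rho=c/(ku^{\beta-1})$, the linear term is at most $\beta c/k\le O(1)$. The quadratic term is $u^{\beta-2}\rho^2\le u^{\beta-1}\rho\cdot(\rho/u)\le O(1)$, since $\rho\le u/2$. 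So the supremum is $\le e^{O(1)}e^{-u^\beta}$. Cauchy then gives
\[
|g^{(k)}(u)|\le k!\,(k u^{\beta-1}/c)^k e^{O(1)}e^{-u^\beta},
\]
which is exactly the claim.

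In the complementary regime $1/(ku^{\beta-1})>u/2$, we have $u^\beta<2k$, so $u$ is bounded by a power of $k$. There I take $\rho=u/2$, or simply $\rho=1/2$ since $u\ge1$. The bound $|(u+h)^\beta|\le (3u/2)^\beta$ then gives
\[
\Re(u+h)^\beta\ge -(3u/2)^\beta\ge u^\beta-C\,u^\beta\ge u^\beta-2Ck.
\]
So $\sup|g|\le e^{2Ck}e^{-u^\beta}$, and Cauchy yields $|g^{(k)}(u)|\le k!\,2^k e^{2Ck}e^{-u^\beta}$. Since $k^k u^{(\beta-1)k}\ge1$, this is again within the claimed bound after enlarging $C_0$.

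The main obstacle is the uniform control of the remainder of $(1+z)^\beta$ for complex $|z|\le1/2$ when $\beta>1$ is non-integer. I expect this to be routine: $(1+z)^\beta$ is analytic on $|z|<1$ with bounded second derivative on $|z|\le 1/2$. One also has to make sure the constants are chosen so that the case split covers all $k\ge1$, $u\ge1$.
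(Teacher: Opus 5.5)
Your proof is correct, but it takes a different route from the paper.

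The paper stays on the real line. It writes $g=e^{h}$ with $h(u)=-u^\beta$ and bounds $|h^{(m)}(u)|\le C_0^m m!\,u^{\beta-m}$ via the falling factorial $(\beta)_m$. It then applies Fa\`a di Bruno in partition form. Each partition contributes at most $C_0^k k!\,u^{(\beta-1)k}$, using $|\pi|\le k$, $u\ge 1$ and $\prod m_\ell!\le k!$. The factor $k^k$ is just the Bell-number bound $B_k\le k^k$ on the number of partitions.

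You instead continue $u^\beta$ holomorphically (principal branch) to a disc of radius $\rho\le u/2$. You bound $\Re(u+h)^\beta$ from below by the first-order expansion of $(1+z)^\beta$. Cauchy's inequality then produces $k!\,k^k u^{(\beta-1)k}$ from $\rho^{-k}$ with $\rho=c/(ku^{\beta-1})$.

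Your route removes all combinatorial bookkeeping, in the spirit of the majorant-series arguments of Section~\ref{sec9.1}. It also shows how lossy the stated bound is. Balancing $\rho^{-k}$ against $e^{\beta u^{\beta-1}\rho}$, i.e.\ taking $\rho\approx\min\{k/u^{\beta-1},\,u/2\}$, gives roughly $C^k\bigl(k!+u^{(\beta-1)k}\bigr)e^{-u^\beta}$. The weaker stated bound is still all that the subsequent Young's-inequality step needs.

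The paper's route, in exchange, uses only real derivative bounds on the inner function. It is reused almost verbatim, with $-Q$ in place of $h$, in Lemma~\ref{lem:g}, where a complex continuation of the iterated logarithms would need extra checking.

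Two cosmetic remarks. First, your heuristic formula $\rho=u/(Cku^{\beta-1})\cdots$ is not the radius you actually use. Second, in the complementary case the condition $c/(ku^{\beta-1})>u/2$ means $ku^\beta<2c$. Together with $u\ge1$, only finitely many $k$ and a bounded range of $u$ occur there. Your inequality $u^\beta<2k$ is true but much weaker than what the case hypothesis gives.
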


\begin{proof}
	Write
	\[
	h(u):=-u^\beta,
	\qquad
	g(u)=e^{h(u)}.
	\]
	For $m\ge 1$,
	\[
	h^{(m)}(u)=-(\beta)_m\,u^{\beta-m},
	\]
	where $(\beta)_m=\beta(\beta-1)\cdots (\beta-m+1)$.
	Since
	\[
	|\,\beta-j\,|\le |\beta|+j\le (1+|\beta|)(j+1)
	\qquad (j\ge 0),
	\]
	we obtain
	\[
	|(\beta)_m|
	\le (1+|\beta|)^m m!.
	\]
	Therefore, after enlarging the constant if needed,
	\begin{equation}
		\label{eq:hm}
		|h^{(m)}(u)|\le C_0^m m!\,u^{\beta-m}
		\qquad (m\ge 1,\ u\ge 1).
	\end{equation}
	
	Now apply Fa\`a di Bruno in the partition form:
	\[
	g^{(k)}(u)
	=
	e^{h(u)}
	\sum_{\pi\in \Pi_k}\prod_{B\in\pi} h^{(|B|)}(u),
	\]
	where $\Pi_k$ is the set of partitions of $\{1,\dots,k\}$.
	Fix $\pi\in\Pi_k$, let $r:=|\pi|$ (the number of blocks in $\pi$) and let $m_1,\dots,m_r$ be its block sizes, so
	$m_1+\cdots+m_r=k$.
	Using \eqref{eq:hm},
	\[
	\prod_{\ell=1}^r |h^{(m_\ell)}(u)|
	\le
	C_0^k\Bigl(\prod_{\ell=1}^r m_\ell!\Bigr)\,u^{r\beta-k}.
	\]
	Since $r\le k$ and $\prod_{\ell=1}^r m_\ell!\le k!$, we obtain
	\[
	\prod_{\ell=1}^r |h^{(m_\ell)}(u)|
	\le
	C_0^k k!\,u^{(\beta-1)k}.
	\]
	The number of partitions of a $k$-element set is the Bell number $B_k$, and $B_k\le k^k$.
	Hence
	\[
	|g^{(k)}(u)|
	\le
	C_0^k\,k!\,k^k\,u^{(\beta-1)k}e^{-u^\beta}.
	\]
\end{proof}

Combining Lemmas~\ref{lem:dtdu} and \ref{lem:gk}, for $n\ge 1$ and $0<t<e^{-1}$ we get
\[
|\psi^{(n)}(t)|
\le
t^{-n}\sum_{k=1}^n |a_{n,k}|\,|g^{(k)}(u)|
\le
t^{-n} e^{-u^\beta}\Bigl(\sum_{k=1}^n |a_{n,k}|\Bigr)
\max_{1\le k\le n}\bigl(C_0^k k!\,k^k\,u^{(\beta-1)k}\bigr).
\]
By Lemma~\ref{lem:dtdu},
\[
\sum_{k=1}^n|a_{n,k}|\le n!,
\]
and since $k\le n$ and $u\ge 1$,
\[
C_0^k k!\,k^k\,u^{(\beta-1)k}
\le
C_0^n n!\,n^n\,u^{(\beta-1)n}.
\]
Therefore
\[
|\psi^{(n)}(t)|
\le
C_0^n (n!)^2 n^n\,e^{nu}\,u^{(\beta-1)n}e^{-u^\beta}.
\]
Taking logarithms and using Stirling in the crude form $\log(n!)\lesssim n\log(n+1)$,
we obtain
\begin{equation}
	\label{eq:rough-bound}
	|\psi^{(n)}(t)|
	\le
	\exp\!\Bigl(C_1 n\log(n+1)+nu+(\beta-1)n\log u-u^\beta\Bigr)
\end{equation}
for a constant $C_1>0$.

For fixed $n$, the exponent on the right-hand side tends to $-\infty$ as $u\to\infty$.
Hence $\psi^{(n)}(t)\to 0$ as $t\downarrow 0$ for every $n$, so the extension by $0$ to
$t\le 0$ is $C^\infty$ at the origin.

It remains to optimize \eqref{eq:rough-bound}. Since $u\ge 1$, we have $\log u\le u$, hence
\[
nu+(\beta-1)n\log u \le \beta n u.
\]
Because $\alpha$ and $\beta$ are conjugate exponents,
Young's inequality gives, for every $\varepsilon\in(0,1)$,
\[
\beta n u \le \varepsilon u^\beta + C_\varepsilon n^\alpha.
\]
Choosing $\varepsilon=\frac12$,
\[
nu+(\beta-1)n\log u-u^\beta \le -\frac12 u^\beta + C_2 n^\alpha \le C_2 n^\alpha.
\]
Thus \eqref{eq:rough-bound} implies
\[
|\psi^{(n)}(t)|
\le
\exp\!\bigl(C_1 n\log(n+1)+C_2 n^\alpha\bigr)
\qquad (0<t<e^{-1}).
\]
Since $\alpha>1$, we have $n\log(n+1)\le \varepsilon n^\alpha + C_\varepsilon n$.
Absorbing the linear term into $K^{n+1}$, we obtain
\[
\sup_{t\in I}|\psi^{(n)}(t)|\le K^{n+1}e^{c n^\alpha}
\]
for suitable constants $K,c>0$ and all $n\ge 0$. This proves $\psi\in \mathcal{C}_M(I)$ in Section \ref{sec7}.

\subsection{Sharp example in Section \ref{8sharp}}\label{sec9.3}
For $x$ sufficiently large we write
\[
L_1(x):=\log x,
\qquad
L_{j+1}(x):=\log L_j(x)\quad (j\ge 1),
\]
and
\[
E_1(x):=e^x,
\qquad
E_{j+1}(x):=\exp(E_j(x))\quad (j\ge 1).
\]
Fix $k\ge 2$, $\alpha>0$, and choose $\delta>0$ so small that
\[
L_j(1/t)\ge 2
\qquad (0<t\le \delta,\ 1\le j\le k).
\]
Let $I:=(-\delta,\delta)$ and define
\[
\psi(t):=
\begin{cases}
	0, & -\delta<t\le 0,\\[4pt]
	\exp\!\Bigl(-\bigl(\log(1/t)\bigr)\,L_k(1/t)^{1/\alpha}\Bigr), & 0<t<\delta.
\end{cases}
\]
Since $k\ge 2$, for $u:=\log(1/t)$ this may be rewritten as
\[
\psi(t)=e^{-Q(u)},
\qquad
Q(u):=uA(u),
\qquad
A(u):=L_{k-1}(u)^{1/\alpha}.
\]
For $0<t<\delta$ set $g(u):=e^{-Q(u)}$.
To estimate $g^{(r)}$ we first record a direct bound on the derivatives of $A$ and $Q$.
Introduce the auxiliary functions
\[
\eta_0(u):=u^{-1},
\qquad
\eta_j(u):=L_j(u)^{-1}\quad (1\le j\le k-1).
\]
Observe that
\[
\eta_0'(u)=-\eta_0(u)^2,
\qquad
\eta_j'(u)=-\eta_0(u)\eta_1(u)\cdots \eta_{j-1}(u)\eta_j(u)^2
\quad (j\ge 1).
\]
In particular, each derivative introduces at least one extra factor of $\eta_0(u)=u^{-1}$.

\begin{lemma}
	\label{lem:AQ}
	There exists a constant $C\ge 1$ such that, for every integer $r\ge 1$ and every sufficiently large $u$,
	\[
	|A^{(r)}(u)|\le C^r r!\,A(u)u^{-r},
	\qquad
	|Q^{(r)}(u)|\le C^r r!\,A(u)u^{1-r}.
	\]
	Moreover,
	\[
	Q'(u)=A(u)\Bigl(1+\frac{1}{\alpha L_1(u)\cdots L_{k-2}(u)L_{k-1}(u)}\Bigr),
	\]
	so in particular
	\[
	Q'(u)\approx A(u)
	\qquad (u\to\infty).
	\]
\end{lemma}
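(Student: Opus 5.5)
The plan is to treat $A(u)=L_{k-1}(u)^{1/\alpha}$ as the composition $F\circ L_{k-1}$ with $F(y)=y^{1/\alpha}$, and to prove by induction on $j$ a uniform Cauchy-type bound for the iterated logarithms:
\[
|L_j^{(r)}(u)|\le C_j^r\, r!\, u^{-r}\quad (r\ge1,\ 1\le j\le k-1,\ u\gg1).
\]
For $j=1$ this is immediate from $L_1^{(r)}(u)=(-1)^{r-1}(r-1)!\,u^{-r}$.

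For the inductive step, write $L_{j+1}=\log\circ L_j$. We avoid Fa\`a di Bruno with explicit partition counts and use instead the majorant-series device already employed in Section \ref{sec9.1}. The inductive hypothesis gives
\[
\sum_{m\ge1}\frac{|L_j^{(m)}(u)|}{m!}|h|^m\le \frac{C_j|h|/u}{1-C_j|h|/u}.
\]
For $|h|=u/(4C_j)$ this is at most $1/3$, while $L_j(u)\ge 2$. Hence $\log(L_j(u+h))$ is given by a convergent series in $(L_j(u+h)-L_j(u))/L_j(u)$ of size at most $1/6$. Comparing coefficients in $h$ then yields $|L_{j+1}^{(r)}(u)|\le r!\,(4C_j)^r u^{-r}\cdot O(1)$. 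This proves the claim for $j+1$, with the bounded factor absorbed into $C_{j+1}^r$.

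Applying the same device to $A=F(L_{k-1})$ gives $|A^{(r)}(u)|\le C^r r!\,A(u)u^{-r}$. The key expansion is
\[
(L_{k-1}+\Delta)^{1/\alpha}=L_{k-1}^{1/\alpha}\,(1+\Delta/L_{k-1})^{1/\alpha},
\]
where $|\Delta|/L_{k-1}\le1/2$ on the majorant disc. The binomial series of $(1+z)^{1/\alpha}$ is bounded by a constant there, so the output carries the factor $A(u)$ exactly. For $Q=uA$, Leibniz gives
\[
Q^{(r)}=uA^{(r)}+rA^{(r-1)},
\]
and both terms are $\lesssim C^r r!\,A(u)u^{1-r}$ after enlarging $C$.

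Finally, $Q'=A+uA'$ with
\[
A'(u)=\frac1\alpha L_{k-1}^{1/\alpha-1}\,L_{k-1}'(u),\qquad L_{k-1}'(u)=\frac{1}{u\,L_1(u)\cdots L_{k-2}(u)}
\]
(chain rule, using $\eta_j'$ identities as recorded). This gives
\[
uA'=\frac{A}{\alpha L_1\cdots L_{k-1}},
\]
which is exactly the displayed formula. The correction term tends to $0$, so $Q'\approx A$.

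The main obstacle is keeping the constants uniform in $r$, in particular obtaining the geometric factor $C^r$ rather than something growing like $r^r$. This is why I would use the majorant and coefficient-comparison argument, which handles composition cleanly, instead of counting partitions directly. Since $u$ is large, all relevant disc radii $u/(4C_j)$ stay inside the region where $L_j\ge2$; this is ensured by taking $u\ge u_0$ with $u_0$ large and noting that $u+h\ge \tfrac34u$.
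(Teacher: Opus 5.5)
Your proof is correct, but it reaches the bounds on $A^{(r)}$ by a different route from the paper's.

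\textbf{The paper's route.} The paper inducts on $r$. It writes $A^{(r)}(u)=A(u)u^{-r}P_r(\eta_1(u),\dots,\eta_{k-1}(u))$ with $P_r$ a polynomial. It then uses $A'=\alpha^{-1}A\,u^{-1}\eta_1\cdots\eta_{k-1}$ and $\eta_j'=-\eta_0\eta_1\cdots\eta_{j-1}\eta_j^2$ to see that each differentiation costs one factor $u^{-1}$. It asserts that the coefficient $\ell^1$-norm grows by at most a factor $C(r+1)$ per step, and concludes from $0<\eta_j\le1$.

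\textbf{Your route.} You induct on the depth $j$ of the iterated logarithm. You prove $|L_j^{(r)}(u)|\le C_j^r\,r!\,u^{-r}$ using the majorant and coefficient-comparison device, which the paper itself uses for $E_k$ in Section \ref{sec9.1}. You then compose with $y\mapsto y^{1/\alpha}$ via the binomial series. Its majorant satisfies $\sum_n|\binom{1/\alpha}{n}|w^n\le(1-w)^{-1/\alpha}$, which is bounded at $w=1/6$. The Leibniz step for $Q$ and the formula for $Q'$ coincide with the paper's.

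\textbf{What each buys.} Your route makes uniformity in $r$ automatic and explicit, with $C_{j+1}=4C_j$. The paper's claim that ``coefficient growth is at most linear in $r$'' tacitly needs the degree bound $\deg P_r\le (k-1)r$, because $\partial_{\eta_j}$ can multiply the coefficient norm by the degree; this is never spelled out. Your route also keeps Section \ref{sec9.3} methodologically uniform with Section \ref{sec9.1}. One more application of the same device to $g=e^{-Q}$, with radius $\rho=1/(4CA(u))$, would even sharpen Lemma \ref{lem:g} to $C^r r!\,A(u)^r e^{-Q(u)}$. The paper's route, in exchange, records that every derivative of $A$ carries the small factors $\eta_j$, though that is not needed here.

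A minor point: your closing remark that the discs satisfy $u+h\ge\frac34u$ is unnecessary. The comparison only involves Taylor coefficients at the single point $u$, so $L_j(u)\ge2$ at the center is all you use.
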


\begin{proof}
	Since
	\[
	A(u)=L_{k-1}(u)^{1/\alpha}=\eta_{k-1}(u)^{-1/\alpha},
	\]
	a straightforward induction shows that for every $r\ge 0$,
	\[
	A^{(r)}(u)=A(u)u^{-r}P_r\bigl(\eta_1(u),\dots,\eta_{k-1}(u)\bigr),
	\]
	where $P_r$ is a polynomial with real coefficients and where the $\ell^1$-norm of the coefficient vector of $P_r$ is bounded by $C^rr!$ for a suitable constant $C$ independent of $r$.
	Indeed, differentiating the identity above produces three kinds of terms: one from differentiating $A$, one from differentiating $u^{-r}$, and one from differentiating the polynomial in the variables $\eta_j$.
	Each differentiation contributes one extra factor of $u^{-1}=\eta_0$, and the coefficient growth is at most linear in $r$ at each step; hence the total coefficient growth is bounded by $C^rr!$.
	Since $0<\eta_j(u)\le 1$ for large $u$, this gives
	\[
	|A^{(r)}(u)|\le C^rr!\,A(u)u^{-r}.
	\]
	
	Now $Q(u)=uA(u)$, so Leibniz' rule gives, for $r\ge 1$,
	\[
	Q^{(r)}(u)=uA^{(r)}(u)+rA^{(r-1)}(u).
	\]
	Using the bound just proved,
	\[
	|Q^{(r)}(u)|
	\le C^rr!A(u)u^{1-r}+rC^{r-1}(r-1)!A(u)u^{1-r}
	\le (2C)^rr!A(u)u^{1-r}.
	\]
	This proves the stated estimate after enlarging $C$.
	Finally,
	\[
	A'(u)=\frac{A(u)}{\alpha uL_1(u)\cdots L_{k-2}(u)L_{k-1}(u)},
	\]
	so
	\[
	Q'(u)=A(u)+uA'(u)
	=A(u)\Bigl(1+\frac{1}{\alpha L_1(u)\cdots L_{k-2}(u)L_{k-1}(u)}\Bigr),
	\]
	which implies $Q'(u)\approx A(u)$ for large $u$.
\end{proof}

\begin{lemma}
	\label{lem:g}
	There exists a constant $C\ge 1$ such that for every integer $r\ge 1$ and every sufficiently large $u$,
	\[
	|g^{(r)}(u)|\le C^r (r!)^2 A(u)^r e^{-Q(u)}.
	\]
\end{lemma}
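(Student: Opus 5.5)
We argue by Faà di Bruno in the partition form, exactly as in the proof of Lemma~\ref{lem:gk}. With $h(u):=-Q(u)$ and $g=e^{h}$,
\[
g^{(r)}(u)=e^{-Q(u)}\sum_{\pi\in\Pi_r}\prod_{B\in\pi}\bigl(-Q^{(|B|)}(u)\bigr),
\]
where $\Pi_r$ is the set of partitions of $\{1,\dots,r\}$. Fix a partition $\pi$ with block sizes $m_1,\dots,m_p$, so $m_1+\cdots+m_p=r$. Lemma~\ref{lem:AQ} gives
\[
\prod_{\ell=1}^p|Q^{(m_\ell)}(u)|\le C^r\Bigl(\prod_\ell m_\ell!\Bigr)A(u)^p\,u^{p-r}.
\]

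Next we convert this into a bound involving only $A(u)^r$. The point is that $u^{p-r}=u^{-(r-p)}\le 1$, while $A(u)^p\le A(u)^r$ because $A(u)=L_{k-1}(u)^{1/\alpha}\ge 1$ for large $u$; this uses the standing choice of $\delta$, which guarantees $L_j\ge 2$. Hence each product is at most $C^r r!\,A(u)^r$, since $\prod_\ell m_\ell!\le r!$. The trade is lossy, since powers of $u$ are discarded, but the target bound allows it. Summing over partitions, the Bell number bound $B_r\le r^r\le e^r r!$ contributes the second factorial and the extra geometric factor $e^r$. This yields
\[
|g^{(r)}(u)|\le (eC)^r\,(r!)^2A(u)^re^{-Q(u)},
\]
which is the claim after enlarging $C$.

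The only point requiring care is verifying $A(u)\ge 1$ in the relevant range, and that Lemma~\ref{lem:AQ} holds uniformly for all $r\ge 1$ on a common range of large $u$. Both follow from the choice of $\delta$ together with the statement of Lemma~\ref{lem:AQ}. I expect no genuine obstacle. The crude step $u^{p-r}\le 1$ could be refined to keep these powers of $u$, but that is not needed here: the later optimization in $u$, analogous to the Young's-inequality step for Section~\ref{sec9.2}, uses only the $A(u)^r$ factor together with the $t^{-n}=e^{nu}$ factor coming from Lemma~\ref{lem:dtdu}.
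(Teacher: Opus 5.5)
Your proposal is correct and follows essentially the same argument as the paper. Both use Fa\`a di Bruno in partition form, apply Lemma~\ref{lem:AQ} blockwise, bound $u^{|\pi|-r}\le 1$ and $A(u)^{|\pi|}\le A(u)^r$, and finish with the Bell-number bound $|\Pi_r|\le e^r r!$. The only difference is that you state explicitly the two facts $u\ge 1$ and $A(u)\ge 1$, which the paper uses without comment.
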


\begin{proof}
	Apply Fa\`a di Bruno in the partition form:
	\[
	g^{(r)}(u)=e^{-Q(u)}\sum_{\pi\in\Pi_r}\prod_{B\in\pi}(-Q^{(|B|)}(u)),
	\]
	where $\Pi_r$ is the set of partitions of $\{1,\dots,r\}$.
	For a partition $\pi\in\Pi_r$, let $m_B:=|B|$.
	By Lemma~\ref{lem:AQ},
	\[
	\prod_{B\in\pi}|Q^{(m_B)}(u)|
	\le C^r\Bigl(\prod_{B\in\pi}m_B!\Bigr)A(u)^{|\pi|}
	\le C^r r!\,A(u)^r,
	\]
	because $|\pi|\le r$ and $\prod_B m_B!\le r!$.
	Summing over partitions and using the Bell-number bound $|\Pi_r|\le e^r r!$, we obtain
	\[
	|g^{(r)}(u)|\le (Ce)^r(r!)^2A(u)^re^{-Q(u)}.
	\]
	Absorb $e$ into the constant.
\end{proof}

We may now estimate $\psi^{(n)}$ directly.
By Lemmas~\ref{lem:dtdu} and \ref{lem:g}, for $n\ge 1$ and $0<t<\delta$,
\[
|\psi^{(n)}(t)|
\le t^{-n}\sum_{r=1}^n |a_{n,r}|\,|g^{(r)}(u)|
\le t^{-n}n!\max_{1\le r\le n}|g^{(r)}(u)|.
\]
Hence
\[
|\psi^{(n)}(t)|
\le C^n (n!)^3 e^{nu}A(u)^n e^{-uA(u)}.
\]
Taking logarithms and using $\log(n!)\le C_0n\log(n+1)$, we get
\begin{equation}
	\label{eq:basic-exponent}
	|\psi^{(n)}(t)|
	\le \exp\!\Bigl(C_1n\log(n+1)+nu+n\log A(u)-uA(u)\Bigr).
\end{equation}
We now optimize the exponent.

Set
\[
F_n(u):=nu+n\log A(u)-uA(u).
\]
There are two cases.

\smallskip
\emph{Case 1: $A(u)\le 2n$.}
Then $L_{k-1}(u)=A(u)^\alpha\le (2n)^\alpha$, hence
\[
u\le E_{k-1}((2n)^\alpha).
\]
Therefore
\[
F_n(u)\le nE_{k-1}((2n)^\alpha)+n\log(2n).
\]
Since $k\ge 2$, the function $E_{k-1}(x)$ dominates polynomial factors; thus, after enlarging the constant,
\[
F_n(u)\le E_{k-1}(C_2n^\alpha).
\]

\smallskip
\emph{Case 2: $A(u)\ge 2n$.}
Since $\log A(u)=\frac1\alpha L_k(u)=o(u)$, for large $u$,
$\log A(u)\le \frac{u}{2}$. If $A(u)\ge 2n$, then
\[
F_n(u)
=nu+n\log A(u)-uA(u)
\le nu+\frac{n}{2}u-uA(u)
\le \frac32 nu-2nu
= -\frac12 nu\le 0.
\]

Thus in this case as well,
\[
F_n(u)\le E_{k-1}(C_2n^\alpha).
\]

Substituting into \eqref{eq:basic-exponent}, we obtain
\[
|\psi^{(n)}(t)|\le \exp\!\bigl(C_1n\log(n+1)+E_{k-1}(C_2n^\alpha)\bigr).
\]
Because $k\ge 2$, one has $n\log(n+1)\le E_{k-1}(C_3n^\alpha)$ for a larger constant $C_3$, and therefore
\[
|\psi^{(n)}(t)|\le \exp\!\bigl(E_{k-1}(cn^\alpha)\bigr)=E_k(cn^\alpha)
\]
for some $c>0$.
After increasing the constants to account for finitely many small values of $n$, we obtain
\[
\sup_{0<t<\delta}|\psi^{(n)}(t)|\le K^{n+1}E_k(cn^\alpha).
\]
Since the exponent in \eqref{eq:basic-exponent} tends to $-\infty$ as $u\to\infty$ for each fixed $n$, all derivatives tend to $0$ as $t\downarrow 0$; thus the extension by $0$ to $t\le 0$ is $C^\infty$ at the origin. This proves $\psi\in \mathcal{C}_M(I)$ in Section \ref{sec8}.

\bibliographystyle{plain}

\end{document}